\newtheorem{theorem}{Theorem}[section]
\newtheorem{lemma}[theorem]{Lemma}
\newtheorem{proposition}[theorem]{Proposition}
\theoremstyle{definition}
\newtheorem{definition}[theorem]{Definition}
\newtheorem{example}[theorem]{Example}
\theoremstyle{remark}
\newtheorem{remark}[theorem]{Remark}
\def\rr{{\mathbb R}}
\def\cm{{\mathrm M}}
\def\az{\alpha}
\def\dz{\delta}
\def\hs{\hspace{0.3cm}}
\def\ls{\lesssim}
\def\r{\right}
\def\lf{\left}
\numberwithin{equation}{section}
\begin{document}

\title[Fractional heat semigroups]
{Fractional heat semigroups on  metric measure  spaces with finite densities and applications to fractional dissipative equations}


\author[J. Huang]{Jizheng Huang}
\address{ School of Science, Beijing University of Posts and Telecommunications, Beijing 100876, P.R.China.}

\email{hjzheng@163.com}

\author[P. Li]{Pengtao Li}
\address{College of Mathematics, Qingdao University, Shandong 266071, China}

\email{ptli@qdu.edu.cn}

\author[Y. Liu]{Yu Liu}
\address{School of Mathematics and Physics, University of Science and Technology Beijing, Beijing 100083, China}
\email{liuyu75@pku.org.cn}

\author[S. Shi]{Shaoguang Shi}
\address{Department of Mathematics, Linyi University, Shandong 276005, China}
\email{shishaoguang@mail.bnu.edu.cn}


\subjclass[2000]{Primary 31E05, 47D03, 35K05, 31C15.}

\date{}

\dedicatory{}

\keywords{metric measure spaces, heat semigroups, fractional
dissipative equations, space-time estimates, capacities, densities.}

\begin{abstract}
Let $(\mathbb M, d,\mu)$ be a metric measure space with upper and lower
  densities:
$$
\begin{cases}
|||\mu|||_{\beta}:=\sup_{(x,r)\in \mathbb M\times(0,\infty)}
\mu(B(x,r))r^{-\beta}<\infty;\\
|||\mu|||_{\beta^{\star}}:=\inf_{(x,r)\in \mathbb M\times(0,\infty)}
\mu(B(x,r))r^{-\beta^{\star}}>0,
\end{cases}
$$
where $\beta, \beta^{\star}$ are two positive constants which are
less than or equal to the Hausdorff dimension of $\mathbb M$. Assume
that $p_t(\cdot,\cdot)$ is a heat kernel on $\mathbb M$ satisfying
Gaussian upper estimates and $\mathcal L$ is the generator of the
semigroup associated with $p_t(\cdot,\cdot)$. In this paper, via a
method independent of Fourier transform, we establish the decay
estimates for the kernels of the fractional heat
 semigroup $\{e^{-t \mathcal{L}^{\alpha}}\}_{t>0}$  and  the operators
$\{{\mathcal{L}}^{\theta/2} e^{-t \mathcal{L}^{\alpha}}\}_{t>0}$,
respectively. By these estimates, we obtain the regularity for the
Cauchy problem of the fractional dissipative
  equation associated with $\mathcal L$ on $(\mathbb M, d,\mu)$. Moreover, based on the geometric-measure-theoretic analysis of a new $L^p$-type capacity defined in $\mathbb{M}\times(0,\infty)$,
we also characterize  a nonnegative Randon measure $\nu$ on $\mathbb
M\times(0,\infty)$ such that $R_\alpha L^p(\mathbb M)\subseteq
L^q(\mathbb M\times(0,\infty),\nu)$ under $(\alpha,p,q)\in (0,1)\times(1,\infty)\times(1,\infty)$,  where $u=R_\alpha f$ is the weak solution of the fractional diffusion equation $(\partial_t+ \mathcal{L}^\alpha)u(t,x)=0$
 in $\mathbb M\times(0,\infty)$ subject to $u(0,x)=f(x)$ in $\mathbb M$.
\end{abstract}

\maketitle

\tableofcontents \pagenumbering{arabic}

 \vspace{0.1in}

\section{Introduction}
For $\alpha\in (0,1]$, the fractional Laplace operator with respect
to the spatial variable is defined by
$$\widehat{(-\triangle)^{\alpha}u}(t,\xi)=|\xi|^{2\alpha}\widehat{u}(t,\xi). $$
In mathematical physics,  the fractional Laplace operators are
widely applied to construct   partial differential equations in
order to study the physical phenomenon, e.g. the generalized
Naiver-Stokes equation, the quasi-geostrophic equation, the
Fokker-Planck equation, the anomalous diffusion equation and so on.
We refer the reader to \cite{Chen-w, C, JMF, Lio, Pod, SZ} and the
references therein.

In the study of the non-linear fractional power dissipative
equations, the space-time estimates  for the corresponding linear
equations play an important role. Let $\alpha\in (0, 1]$. In
\cite{miao}, Miao, Yuan and Zhang  established the space-time
estimates for the following fractional dissipative equations
\begin{equation}\label{eq-frac}
\begin{cases}
\partial_tu(x,t)+(-\Delta)^\alpha u(x,t)=F(t,x) &\ \forall\ (x,t)\in\mathbb{R}_+^{n+1};\\
u(x,0)=f(x)&\ \forall\ x\in\mathbb{R}^n,
\end{cases}
\end{equation}
thereby obtaining the well-posedness of a class of semi-linear fractional power dissipative equations.
The solution  of (\ref{eq-frac}) can be written as
$$
  u(x,t)=R_\alpha f(x,t)+S_\alpha F(x,t),
$$
where
\begin{equation*}
\begin{cases}
&R_\alpha f(x,t):=e^{-t(-\Delta)^\alpha} f(x);\\
&S_\alpha F(x,t):=\int_0^te^{-(t-s)(-\Delta)^\alpha} F(x,s)ds.
\end{cases}
\end{equation*}
In \cite{Zhai}, Zhai obtained a Strichartz type estimate for
$S_\alpha F$ and proved the global existence and uniqueness of
regular solutions for the generalized Naiver-Stokes equation. The
papers \cite{jiangrenjin2, chang, sx}  explored some
analytic-geometric properties of the regularity and the capacity
associated with $\partial_{t}+(-\Delta)^{\alpha}$.

Motivated by \cite{miao, Zhai,jiangrenjin2}, we consider the same
questions in the setting of the metric measure space  $(\mathbb M,
d,\mu)$   with:
\begin{equation}\label{eq1.2}
(\text{upper density})\ \ |||\mu|||_{\beta}:=\sup_{(x,r)\in
\mathbb M_+} \frac{\mu(B(x,r))}{r^{\beta}}<\infty
\end{equation}
 and
\begin{equation}\label{eq1.3}(\text{lower density})\ \
|||\mu|||_{\beta^{\star}}:=\inf_{(x,r)\in \mathbb M_+}
\frac{\mu(B(x,r))}{r^{\beta^{\star}}}>0,
\end{equation}
where $\mathbb M_+:=\mathbb M\times(0,\infty)$ and the positive
constants $\beta, \beta^{\star}$ are less than or equal to the Hausdorff
dimension of $\mathbb M$, see Definition \ref{defn1.2} below for
some details of metric measure spaces. It should be noted that if
$\beta=\beta^{\star}$, then (\ref{eq1.2})\ \&\ (\ref{eq1.3}) imply
\begin{equation}\label{eqa2.1}
    \mu(B(x,r))\simeq r^\beta
\end{equation}
for all $x\in\mathbb M$ and $r>0$. The metric measure space $(\mathbb{M}, d, \mu)$ satisfying
(\ref{eqa2.1}) is called an Ahlfors-David regular space - for example -
$$
\begin{cases}
\mathbb{M}=\mathbb R^n;\\
d(x,y)=|x-y|\ \ \forall\ \ x,y\in\mathbb R^n;\\
d\mu(x)=|x|^{-n<\gamma<n}\,dx\ \ \forall\ \ x\in\mathbb R^n,\\
\end{cases}
$$
which has
been investigated extensively; see e.g. \cite{Cheeger,
Heinonen, Capogna} and the references therein.

 The metric
spaces with finite densities cover many classical geometric models,
for example, Euclidean spaces,   hyperbolic spaces, nilpotent Lie
groups, connected Riemannian manifolds, etc. In recent years, the
problems related to analysis and partial differential equations on
metric spaces attract the attentions of many researchers. In
\cite{GHL}, under  some additional assumptions on $\mathbb{M}$,
Grigor'yan, Hu and Lau proved an embedding theorem and obtained the
existence results for weak solutions to semilinear elliptic
equations. Grigor'yan and Hu \cite {GH} established equivalent
characterizations for off-diagonal upper bounds of the heat kernel
of a regular Dirichlet form on the metric measure space in two
settings, see also  Grigor'yan, Hu and Hu \cite{GHH, GHH2}.
 In \cite{AGS}, Ambrosio, Gigli and Savar\'e studied the
heat flow and the calculus tools on metric measure spaces. Auscher
and Hytonen \cite{AH}  introduced the notion of spline function in
geometrically doubling quasi-metric spaces and obtained a universal
Calder\'on reproducing formula to study and develop function space
theories, singular integrals and $T(1)$ theorem. For further
information on function spaces on metric measure spaces, we refer
the reader to \cite{Lu-Yang-Yuan, Yan-Yang, Yang-1, yang}  and the
references therein.

Throughout this paper, let $\mathcal L$ be  the generator of the
semigroup $\{P_t\}_{t>0}$ on $\mathbb M$ introduced in Section
\ref{sec-2.1} and we assume the semigroup $\{P_t\}_{t>0}$ possesses
an integral kernel which is denoted by $p_{t}(\cdot,\cdot)$. For
$\alpha\in(0,1)$, the fractional power of $\mathcal{L}$ denoted by
$\mathcal{L}^{\alpha}$ is defined as (cf. \cite{yosida} or
\cite{Ma})
\begin{equation}\label{eq-fractional operator}
\mathcal{L}^{\alpha}=\frac{1}{\Gamma(-\alpha)}\int_0^\infty
\big[e^{-t\sqrt{\mathcal{L}}}f(x)-f(x)\big]\frac{dt}{t^{1+2\alpha}}\ \forall\
f\in L^2(\mathbb M)
\end{equation}
and  the fractional dissipative operator associated with
$\mathcal{L}$ is defined as
$$\mathcal{L}^{(\alpha)}=\partial_{t}+\mathcal{L}^{\alpha}.$$
We will investigate  the space-time estimates and regularity  for
the Cauchy problem of the fractional dissipative equation on the
metric measure space:
\begin{equation}\label{eq-1.7}
\begin{cases}
\mathcal{L}^{(\alpha)}u(x,t)=f(x,t) &\ \forall\ (x,t)\in\mathbb{M}\times (0, \infty);\\
u(x,0)=\varphi(x)&\ \forall\ x\in\mathbb{M}.
\end{cases}
\end{equation}
We call $u(\cdot,\cdot)$ a weak solution to equation (\ref{eq-1.7})
provided that for any function $v(\cdot, \cdot)\in
C^{\infty}_{0}(\mathbb{M}\times (0,\infty))$ one has
\begin{eqnarray*}
&&\int_{\mathbb{M}\times (0, \infty)}u(x,t)\mathcal{L}^{\alpha}v(x,t)d\mu(x)dt-\int_{\mathbb{M}\times
(0,
\infty)}u(x,t)\partial_tv(x,t)d\mu(x)dt\\
&&=-\int_{\mathbb{M}\times
(0, \infty)}f(x,t)v(x,t)d\mu(x)dt-\int_{\mathbb{M}}\varphi(x)v(x,0)d\mu(x).
\end{eqnarray*}
By the
Duhamel principle,  the weak solution $u(\cdot,\cdot)$ can be
written in the integral form as
\begin{eqnarray*}
u(x,t)&=&e^{-t{\mathcal{L}}^{\alpha}}(\varphi)(x)+\int^{t}_{0}e^{-(t-\tau){\mathcal{L}}^{\alpha}}(f)(x,\tau)d\tau\\
&:=&e^{-t{\mathcal{L}}^{\alpha}}(\varphi)(x)+G(f)(t,x).
\end{eqnarray*}
For the case of Laplace operator on $\mathbb{R}^{n}$,  the results of \cite{miao, jiangrenjin2} are based on a point-wise estimate of the integral kernel
of the fractional heat semigroups $e^{-t(-\Delta)^{\alpha}}$ in \cite{miao}. Denote by $K_{\alpha, t}$ the integral kernel of the operator $e^{-t(-\Delta)^\alpha}$, i.e.,
$$ K_{\alpha, t}(x)=(2\pi)^{-n/2}\int_{\mathbb R^n}e^{ix\cdot y-t|y|^{2\alpha}}dy $$
and denote by $K^{\theta}_{\alpha, t}$ the kernel
$(-\Delta)^{\theta/2}K_{\alpha, t}(x)$. In \cite{miao}, by an
invariant derivative technique and Fourier analysis  method, the
authors conclude that
 $K_{\alpha, t}$ and $K^{\theta}_{\alpha, t}$ satisfy  the following point-wise estimates, respectively (cf. \cite[Lemmas 2.1\ \&\
 2.2]{miao}),

\begin{equation}\label{eq-miao-result}
\left\{ \begin{aligned}
K_{\alpha, t}(x)&\lesssim \frac{t}{(t^{{1}/{2\alpha}}+|x|)^{n+2\alpha}} \quad \forall (x,t)\in\mathbb R_+^{n+1};\\
K_{\alpha, t}^{\theta}(x)&\lesssim
\frac{1}{(t^{1/2\alpha}+|x|)^{n+\theta}} \quad \forall
(x,t)\in\mathbb R_+^{n+1}.
\end{aligned} \right.
\end{equation}
Unfortunately, for the case of general operator $\mathcal{L}$ on
$\mathbb{M}$, the method of \cite{miao} is no longer applicable. To
overcome this difficulty, we use the subordinative formula to
represent the integral kernel $K^{\mathcal{L}}_{\alpha,t}$ of the
semigroup $e^{-t\mathcal{L}^\alpha}$:
\begin{equation}\label{eq-sub-for-1}
K^{\mathcal{L}}_{\alpha,t}(x,y)=\int^{\infty}_{0}\eta^{\alpha}_{t}(s)p_{s}(x,y)ds,
\end{equation}
where $p_{s}(\cdot,\cdot)$ is the integral kernel of the heat
semigroup $\{P_s\}_{s>0}$ and $\eta_{t}^{\alpha}(\cdot)$ satisfies the
conditions (\ref{eq-heat}) below. Without loss of generality,
 we introduce several assumptions of $p_{t}(\cdot,\cdot)$: {\bf(A1)}-{\bf(A4)}, see Sections \ref{sec-2.1}\ \&\ \ref{sec-2}. Under the assumptions {\bf(A1)}\ \&\ {\bf(A2)},
 the formula (\ref{eq-sub-for-1})
  enables us to obtain the point-wise
 estimates
 for $K^{\mathcal{L}}_{\alpha,t}(\cdot,\cdot)$ and ${\mathcal{L}}^{\theta/2}K^{\mathcal{L}}_{\alpha,t}(\cdot,\cdot)$ and see Propositions \ref{prop-1}\ \&\ \ref{prop-2}, respectively.
  In Section \ref{sec-3},
 we assume that the heat kernel $p_t(\cdot,\cdot)$ satisfies {\bf(A1)}. We apply the point-wise estimates of $K^{\mathcal{L}}_{\alpha,t}(\cdot,\cdot)$ to derive the space-time
 estimates for $e^{-t{\mathcal{L}}^{\alpha}}(\varphi)(x)$ and $G(f)(t,x)$, respectively,  see Theorems \ref{th-1.6}, \ref{th-1.7}\ \&\ \ref{th-4.3} for the details.
 In Section \ref{sec-4}, by the aid of the space-time
  estimates obtained in Section \ref{sec-31},   we prove some  regularity results of the Strichartz type for solutions to the problem  (\ref{eq-1.7}) if  $p_t(\cdot,\cdot)$
  satisfies {\bf (A1)}-{\bf (A3)} (see Theorem
  \ref{th-1.8}). Finally, if  $p_t(\cdot,\cdot)$ satisfies  {\bf (A4)}, we
  introduce the $L^p$ capacities in $\mathbb{M}_+$ and investigate $L^q(\mathbb M_+)$-extensions of $L^p(\mathbb
  M)$ in Sections \ref{sec-cap}\ \&\ \ref{sec-5}.

\begin{remark}
\item{(i)} Let $\mathbb{M}=\mathbb{R}^{n}$ and $\mathcal{L}=-\Delta$. For this case, the kernel of $e^{-t(-\Delta)^{\alpha}}$  obviously satisfies {\bf (A1)}-{\bf (A3)}. It is easy to see that
 Propositions \ref{prop-1}\ \&\ \ref{prop-2} go back to (\ref{eq-miao-result}). In other words, our method is also adequate for the classical fractional heat semigroup on the setting of $\mathbb{R}^{n}$.
\item{(ii)} We should point out that for the kernel $p_{t}(\cdot, \cdot)$ associated with $e^{-t\mathcal{L}}$,  the assumptions {\bf (A1)}-{\bf (A3)} are reasonable. In fact, there are many
operators $\mathcal{L}$ on $\mathbb{M}$ satisfying {\bf
(A1)}-{\bf(A3)}; see also Examples \ref{rem1.1}-\ref{exam-4}.
\end{remark}

Throughout this article, we will use  $c$ and  $C$ to denote the
positive constants, which are independent of main parameters and may
be different at each occurrence. In the above and below,
$\mathsf{X}\simeq\mathsf{Y}$ means
$\mathsf{Y}\lesssim\mathsf{X}\lesssim\mathsf{Y}$, where the second
estimate means that there is a positive constant $C$, independent of
main parameters, such that $\mathsf{X}\le C\mathsf{Y}$. For $\mathbb
X=\mathbb M$ or $\mathbb M_+$ the symbols $C_0(\mathbb X)$   stands
for all continuous functions with compact support in $\mathbb X$.

\section{Preliminaries}
\subsection{Heat kernels on metric measure spaces}\label{sec-2.1}

In what follows, we recall the definition of the metric measure space (cf. \cite{Gri}) and other related facts.
\begin{definition}\label{defn1.2}
We say that a triple $(\mathbb M, d,\mu)$ is a metric measure space
if $(\mathbb M,d)$ is a  non-empty metric space and $\mu$ is a Borel
measure on $\mathbb M$. Moreover, we always assume $\mathbb{M}$ is
locally compact and separable.
\end{definition}
Let $(\mathbb M,\mu)$ be a measure space. Denote by $L^q(\mathbb
M,\mu)$, $1\le q\le\infty$,  the Lebesgue spaces on $(\mathbb
M,\mu)$.
\begin{definition}\label{defn1.1}
A family $\{p_t\}_{t>0}$ of measurable functions on $\mathbb M\times\mathbb M$ is called a heat kernel if for almost all $x,y\in\mathbb M$ and $s,t>0$, it satisfies
\begin{eqnarray}
 &\rm(i)&\ \ p_t(x,y)\geq 0;\nonumber\\
 &\rm(ii)&\ \ \int_{\mathbb M}p_t(x,y)d\mu(y)\le 1;\label{a1.2}\\
 &\rm(iii)&\ \ p_t(x,y)=p_t(y,x);\nonumber\\
 &\rm(iv)&\ \ p_{s+t}(x,y)=\int_{\mathbb M}p_s(x,z)p_t(z,y)d\mu(z);\label{a1.3}\\
 &\rm(v)&\ \ \lim_{t\to 0^+}\int_{\mathbb M}p_t(x,y)f(y)d\mu(y)=f(x)\ \ \forall\ \ f\in L^2(\mathbb M,\mu).\label{a1.4}
\end{eqnarray}
\end{definition}
It is easy to see that the heat kernel and the Poisson kernel on
$\mathbb R^n$ satisfy Definition \ref{defn1.1}. For any
Riemannian manifold $\mathbb M$, the heat
 kernel associated with the Laplace-Beltrami operator satisfies Definition \ref{defn1.1} under certain mild hypotheses about $\mathbb M$ (cf. \cite{Gri1} and \cite{Gri2}).

Any heat kernel satisfying (i)-(v) above gives rise to the heat semigroup $\{P_t\}_{t>0}$, where $P_t$ is the operator defined on $L^2(\mathbb M,\mu)$ by
\begin{equation*}\label{a1.5}
  P_tf(x)=\int_{\mathbb M}p_t(x,y)f(y)d\mu(y).
\end{equation*}
In fact, by H\"older's inequality and $(\ref{a1.2})$, we can get $
\|P_tf\|_2\le \|f\|_2.$ This shows that $P_t$ is a bounded operator
on $L^2(\mathbb M)$ with $\|P_t\|_{op}\leq 1$. The symmetry of the
heat kernel implies that $P_t$ is a self-adjoint operator. Also,
$(\ref{a1.3})$ implies that $P_sP_t=P_{s+t}$, i.e., the family
$\{P_t\}_{t>0}$ is a semigroup. Furthermore, by $(\ref{a1.4})$, we
know that $\{P_t\}_{t>0}$ is a
 strongly continuous, self-adjoint, contraction semigroup on $L^2(\mathbb M,\mu)$.

The generator $\mathcal L$ of the semigroup $\{P_t\}_{t>0}$ is defined by
$$ \mathcal L f:=\lim_{t\rightarrow 0}\frac{f-P_tf}{t},$$
where the limit is in $L^2(\mathbb M,\mu)$. $\mathcal L$ is a self-adjoint, positive definite operator, we also have
\begin{equation*}\label{a1.6}
 P_t=e^{-t\mathcal L}.
\end{equation*}

Let $B(x,r)$ be the ball in $\mathbb M$ with radius $r$ centered at the point $x\in\mathbb M$, that is,
$$B(x,r):=\{y\in\mathbb M: d(x,y)<r\}.$$

Next, we give the following assumptions for the heat kernel $p_{t}(\cdot,\cdot)$ and a constant $C>0$.

{\bf Assumption (A1):} The heat kernel   satisfies the
  upper estimate $$ 0< p_s(x,y) \lesssim  \frac{1}{\mu(B(x,\sqrt{s}))}e^{-C{d(x,y)^2}/{s}} $$
for all $s > 0$  and   all $x, y\in \mathbb M$.

{\bf Assumption (A2):} The heat kernel satisfies the inequality
$$|\partial_s p_s(x,y)|\lesssim
\frac{1}{s\mu(B(x,\sqrt{s}))}e^{-C{d(x,y)^2}/{s}} $$ for all $s > 0$
and   all $x, y\in \mathbb M$.

{\bf Assumption (A3):} The heat kernel   satisfies the inequality
$$|p_s(x,y)-p_s(x_0,y)|\lesssim
\frac{1}{\mu(B(x,\sqrt{s}))}\big(\frac{d(x,x_0)}{\sqrt{s}}\big)^{\varepsilon}e^{-C{d(x,y)^2}/{s}}
$$ for all $s> 0$  and   all $x, y, x_0\in \mathbb M$ with some
$\varepsilon>0$.

There are many examples of heat kernels satisfying   {\bf(A1),
\bf(A2)} and {\bf(A3)}. The  heat kernel of the operator
$\mathcal{L}=-\Delta$ on $\mathbb{R}^n$, which is also called Gaussian
kernel,
 obviously satisfies {\bf(A1), \bf(A2)} and {\bf(A3)}. But beyond that,  we will give several typical examples on more general settings.

\begin{example}\label{rem1.1}  Following from \cite{Hebisch}, we know that the heat kernel of the operator
$$\mathcal{L}=-\frac{1}{\omega(x)}\sum_{i,j}^{}\partial_{i}(a_{ij}\partial_{j})$$ on $\mathbb{R}^n$ satisfies {\bf(A1), \bf(A2)} and {\bf(A3)},
where $(a_{ij}(\cdot))_{i,j}$ is a real symmetric matrix satisfying
$$\omega(x)|\xi|^{2}\simeq \sum_{i,j}^{}a_{ij}(x)\xi_{i}\bar{\xi_{j}}.$$
with $\omega$ being a nonnegative weight from the Muckenhoupt class
$A_{2}$.

\end{example}

\begin{example}\label{exam-1}
 Suppose $V$ is a nonnegative
potential that belongs to a certain reverse H\"{o}lder class (cf. \cite{shen}). \cite{dz1} implies that the heat kernel of the Schr\"{o}dinger operator $ \mathcal{L}=-\Delta+V$ on $\mathbb{R}^n$
satisfies {\bf(A1), \bf(A2)} and {\bf(A3)}.  Furthermore, the heat kernel of the  degenerate Schr\"{o}dinger operator  $\mathcal{L}=-\frac{1}{\omega(x)}\sum_{i,j}^{}\partial_{i}(a_{ij}\partial_{j})+V $
on $\mathbb{R}^n$ satisfies {\bf(A1), \bf(A2)} and {\bf(A3)} (cf.  \cite{huang} and \cite{Dz2}).
\end{example}

\begin{example}\label{exam-2}
  Let $\mathbb{G}$ be a stratified Lie group and $\Delta_\mathbb{G}$ be the sub-Laplacian on $\mathbb{G}$. Suppose $V$ is a nonnegative
potential that belongs to a certain reverse H\"{o}lder class (cf.
\cite{Li1}). Let   $ \mathcal{L}=-\Delta_\mathbb{G}+V$ be the
Schr\"{o}dinger operator on $\mathbb{G}$. It follows from
\cite{lin1} that the heat kernel of the operator $\mathcal{L}$
satisfies {\bf(A1), \bf(A2)} and {\bf(A3)}.
\end{example}

\begin{example}\label{exam-3}
  Let $\mathbb{G}$ be a connected and simply
connected nilpotent Lie group. Let $X\equiv\{X_1,\cdots,X_k\}$ be
left invariant vector fields on $\mathbb{G}$ satisfying the
H\"ormander condition that $X$ together with
their commutators of order $\le m$ generates the tangent space of
$\mathbb{G}$ at each point of $\mathbb{G}$. The sub-Laplacian is
given by $\Delta_\mathbb{G}\equiv\sum_{j=1}^kX_j^2.$ Suppose $V$ is
a nonnegative potential that belongs to a certain reverse H\"{o}lder
class (cf. \cite{Li1}). The sub-Laplace Schr\"odinger operator
$\mathcal{L}$ is defined by $\mathcal{L}=-\Delta_\mathbb{G}+V.$ It
follows from \cite{yang} that the heat kernel of the operator
$\mathcal{L}$ satisfies {\bf(A1), \bf(A2)} and {\bf(A3)}.
Especially, if $V=0$, then it is easy to check that the heat kernel
of the operator $\Delta_\mathbb{G}$   satisfies {\bf(A1), \bf(A2)}
and {\bf(A3)} (or see \cite{Varopoulos}).
\end{example}

\begin{example}\label{exam-4}
Let $\mathbb{M}$ be a complete Riemannian manifold satisfying the
doubling volume property, $d\mu$   be the Riemannian measure  and
$\Delta_{\mathbb{M}}$ be the Laplace-Beltrami operator on
$\mathbb{M}$. The main results in \cite{grigoryan} imply that the
heat kernel $p_t(\cdot,\cdot)$ satisfies {\bf(A1)}  and  {\bf(A2)}
if the heat kernel satisfies the additional estimate
$p_t(x,x)\lesssim {1}/{\mu(B(x,\sqrt{t}))}$  for any  geodesic ball
$ B(x,\sqrt{t})$ in $\mathbb{M}$. If $\mathbb{M}$ has nonnegative
Ricci density, it follows from \cite{yau}  that the   heat kernel
of $\Delta_{\mathbb{M}}$   satisfies {\bf(A1)}  and  {\bf(A3)}.

\end{example}

\subsection{Estimates for $e^{-t\mathcal{L}^\alpha}$}\label{sec-2}

In this section, we give the estimates for the heat kernel of the
semigroup $e^{-t\mathcal{L}^\alpha}$. For $\alpha>0$, let
$K^{\mathcal{L}}_{\alpha,t}(\cdot,\cdot)$ be the integral kernel of the semigroup
$e^{-t\mathcal{L}^\alpha}$. The subordinative formula (cf.
\cite{Gri}) indicates that $K^{\mathcal{L}}_{\alpha,t}(\cdot,\cdot)$ can be expressed as
\begin{equation}\label{eq-sub-for}
K^{\mathcal{L}}_{\alpha,t}(x,y)=\int^{\infty}_{0}\eta^{\alpha}_{t}(s)p_{s}(x,y)ds,
\end{equation}
where $p_{s}(\cdot,\cdot)$ be the integral kernel of the heat
semigroup $\{e^{-s\mathcal{L}}\}_{s>0}$. Here the non-negative
continuous  function $\eta_{t}^{\alpha}(\cdot)$ satisfies
\begin{equation}\label{eq-heat}
\begin{cases}
&\eta^{\alpha}_{t}(s)=\frac{1}{t^{1/\alpha}}\eta^{\alpha}_{1}(s/t^{1/\alpha});\\
&\eta^{\alpha}_{t}(s)\lesssim \frac{t}{s^{1+\alpha}}  \ \forall\ s,t>0;\\
&\int^{\infty}_{0}s^{-\gamma}\eta^{\alpha}_{1}(s)ds<\infty,\ \gamma>0;\\
&\eta^{\alpha}_{t}(s)\simeq\frac{t}{s^{1+\alpha}} \,\ \forall\,\
s\geq t^{1/\alpha}>0.
\end{cases}
\end{equation}
Please  see \cite{Gri} for some examples of the function
$\eta^{\alpha}_{t}$.

Now we give a point-wise estimate of the kernel $K^{\mathcal{L}}_{\alpha,t}(\cdot,\cdot)$.
\begin{proposition}\label{prop-1} Assume that the heat kernel $p_t(\cdot,\cdot)$ satisfies {\bf
(A1)}. Let $0<\alpha<1$. If the measure $\mu$ satisfies
(\ref{eq1.3}), then for all $(x,y,t)\in\mathbb M\times\mathbb M\times(0,\infty)$,
$$
K^{\mathcal{L}}_{\alpha, t}(x,y)\lesssim\min\Big\{t^{-\beta^{\star}/2\alpha},\ \frac{t}{d(x,y)^{\beta^{\star}+2\alpha}}\Big\}\lesssim\frac{t}{(t^{1/2\alpha}+d(x,y))^{\beta^{\star}+2\alpha}}.
$$
\end{proposition}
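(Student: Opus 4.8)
The plan is to substitute the Gaussian upper bound {\bf (A1)} and the lower density (\ref{eq1.3}) into the subordinative formula (\ref{eq-sub-for}), thereby reducing the estimate to a single scalar integral against the subordinator density $\eta^{\alpha}_{t}$, and then to extract the two competing terms of the minimum separately. Writing $r:=d(x,y)$, the lower density gives $\mu(B(x,\sqrt{s}))\gs s^{\beta^{\star}/2}$, so {\bf (A1)} yields $p_{s}(x,y)\ls s^{-\beta^{\star}/2}e^{-Cr^{2}/s}$ and hence
\begin{equation*}
K^{\mathcal{L}}_{\alpha,t}(x,y)\ls\int_{0}^{\infty}\eta^{\alpha}_{t}(s)\,s^{-\beta^{\star}/2}e^{-Cr^{2}/s}\,ds.
\end{equation*}
Everything then reduces to exploiting the three structural properties of $\eta^{\alpha}_{t}$ recorded in (\ref{eq-heat}).

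For the bound $t^{-\beta^{\star}/2\alpha}$ I would discard the exponential ($e^{-Cr^{2}/s}\le1$) and invoke the scaling identity $\eta^{\alpha}_{t}(s)=t^{-1/\alpha}\eta^{\alpha}_{1}(s/t^{1/\alpha})$. The substitution $u=s/t^{1/\alpha}$ pulls out the prefactor $(t^{1/\alpha})^{-\beta^{\star}/2}=t^{-\beta^{\star}/2\alpha}$ and leaves the $t$-independent integral $\int_{0}^{\infty}u^{-\beta^{\star}/2}\eta^{\alpha}_{1}(u)\,du$, which is finite by the moment condition $\int_{0}^{\infty}s^{-\gamma}\eta^{\alpha}_{1}(s)\,ds<\infty$ applied with $\gamma=\beta^{\star}/2>0$. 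This gives $K^{\mathcal{L}}_{\alpha,t}(x,y)\ls t^{-\beta^{\star}/2\alpha}$.

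For the bound $t\,d(x,y)^{-\beta^{\star}-2\alpha}$ I would instead keep the Gaussian factor and use the pointwise estimate $\eta^{\alpha}_{t}(s)\ls t\,s^{-1-\alpha}$, so that
\begin{equation*}
K^{\mathcal{L}}_{\alpha,t}(x,y)\ls t\int_{0}^{\infty}s^{-1-\alpha-\beta^{\star}/2}e^{-Cr^{2}/s}\,ds.
\end{equation*}
The change of variables $v=r^{2}/s$ converts this into $t\,r^{-\beta^{\star}-2\alpha}$ times the convergent Gamma integral $\int_{0}^{\infty}v^{\alpha+\beta^{\star}/2-1}e^{-Cv}\,dv=C^{-(\alpha+\beta^{\star}/2)}\Gamma(\alpha+\beta^{\star}/2)$, yielding $K^{\mathcal{L}}_{\alpha,t}(x,y)\ls t\,d(x,y)^{-\beta^{\star}-2\alpha}$. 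Combining the two bounds gives the stated minimum, and the final comparison with $t(t^{1/2\alpha}+d(x,y))^{-\beta^{\star}-2\alpha}$ is the elementary observation that $t^{1/2\alpha}+r\simeq t^{1/2\alpha}$ when $r\le t^{1/2\alpha}$ (where the minimum equals $t^{-\beta^{\star}/2\alpha}$) and $t^{1/2\alpha}+r\simeq r$ otherwise (where it equals $t\,r^{-\beta^{\star}-2\alpha}$).

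The calculus above is routine; the only point needing genuine care is the convergence of the two scalar integrals, and it is instructive to see which factor saves each. In the first estimate the danger is at $s=0$, and convergence of $\int_{0}^{\infty}u^{-\beta^{\star}/2}\eta^{\alpha}_{1}(u)\,du$ there is exactly what the moment condition in (\ref{eq-heat}) is designed to supply, reflecting the rapid decay of $\eta^{\alpha}_{1}$ as $s\to0^{+}$. In the second estimate, by contrast, dropping the Gaussian would make $\int_{0}^{\infty}s^{-1-\alpha-\beta^{\star}/2}\,ds$ diverge at $s=0$, so retaining $e^{-Cr^{2}/s}$ is essential and is precisely what produces the Gamma integral. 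Both integrals are finite as soon as $\alpha>0$ and $\beta^{\star}>0$; in particular the hypothesis $\beta^{\star}\le\dim_{H}\mathbb{M}$ plays no role in this proposition, only positivity of $\beta^{\star}$ is used.
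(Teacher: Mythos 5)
Your proposal is correct and follows essentially the same route as the paper's proof: both substitute \textbf{(A1)} and the lower density into the subordinative formula, obtain the bound $t\,d(x,y)^{-\beta^{\star}-2\alpha}$ by keeping the Gaussian and using $\eta^{\alpha}_{t}(s)\lesssim t s^{-1-\alpha}$ with a change of variables producing a convergent Gamma-type integral, obtain the bound $t^{-\beta^{\star}/2\alpha}$ by discarding the Gaussian and using the scaling identity together with the moment condition in (\ref{eq-heat}), and finish with the same two-case comparison according to whether $d(x,y)$ exceeds $t^{1/2\alpha}$. (Your write-up even fixes a harmless typo in the paper's displayed integral, where $\tau^{-2\alpha}$ should read $\tau^{-\beta^{\star}/2}$.)
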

\begin{proof} Since  the measure $\mu$ satisfies
(\ref{eq1.3}),    $\mu(B(x,r))\gtrsim r^{\beta^{\star}}$ for any
$(x,r)\in \mathbb{M}_+$. So it can be deduced from {\bf (A1)} and
(\ref{eq-sub-for}) that
$$K^{\mathcal{L}}_{\alpha,t}(x,y)\lesssim\int^{\infty}_{0}\frac{t}{s^{1+\alpha}}s^{-\beta^{\star}/2}e^{-d(x,y)^{2}/s}ds.$$
Let $r=d(x,y)/\sqrt{s}$. Then
\begin{eqnarray*}
K^{\mathcal{L}}_{\alpha,t}(x,y)&\lesssim&\int^{\infty}_{0}\frac{t}{({d(x,y)}/{r})^{2+2\alpha}}\Big(\frac{d(x,y)}{r}\Big)^{-\beta^{\star}}e^{-r^{2}}\frac{d(x,y)^{2}}{r^{3}}dr\\
&\lesssim&\frac{t}{d(x,y)^{\beta^{\star}+2\alpha}}\int^{\infty}_{0}r^{2\alpha+\beta^{\star}-1}e^{-r^{2}}dr\\
&\lesssim&\frac{t}{d(x,y)^{\beta^{\star}+2\alpha}}.
\end{eqnarray*}
On the other hand, letting $\tau=s/t^{1/\alpha}$, we have
\begin{eqnarray*}
K^{\mathcal{L}}_{\alpha,t}(x,y)&\lesssim&\int^{\infty}_{0}s^{-\beta^{\star}/2}\frac{1}{t^{1/\alpha}}\eta_{1}^{\alpha}(s/t^{1/\alpha})ds\\
&\lesssim&t^{-\beta^{\star}/2\alpha}\int^{\infty}_{0}\frac{1}{\tau^{2\alpha}}\eta^{\alpha}_{1}(\tau)d\tau\\
&\lesssim&t^{-\beta^{\star}/2\alpha},
\end{eqnarray*}
which gives
$$K^{\mathcal{L}}_{\alpha, t}(x,y)\lesssim\min\Big\{t^{-\beta^{\star}/2\alpha},\ \frac{t}{d(x,y)^{2\alpha}}\Big\}.$$
Below we consider two cases.

{\it Case 1: $t^{1/2\alpha}>d(x,y)$}. This ensures
$$\frac{t}{(t^{1/2\alpha}+d(x,y))^{\beta^{\star}+2\alpha}}\gtrsim t^{-\beta^{\star}/2\alpha}.$$

{\it Case 2: $t^{1/2\alpha}\leq d(x,y)$}. This ensures
$$\frac{t}{(t^{1/2\alpha}+d(x,y))^{\beta^{\star}+2\alpha}}\gtrsim \frac{t}{(2d(x,y))^{\beta^{\star}+2\alpha}}.$$
This completes the proof of Proposition \ref{prop-1}.
\end{proof}
If we add one more condition {\bf (A4)}  to the above {\bf  (A1)-(A2)-(A3)}:\\
{\bf Assumption (A4):}  $$ p_s(x,y) \simeq
\frac{1}{\mu(B(x,\sqrt{s}))}e^{-C{d(x,y)^2}/{s}}\ \ \forall \ \
(x,y,s)\in\mathbb M\times\mathbb M\times(0,\infty),
$$
then we can also get a lower bound for $K^{\mathcal{L}}_{\alpha,t}(\cdot,\cdot)$.
\begin{proposition}\label{prop-4}
 Assume that the heat kernel $p_t(\cdot,\cdot)$ satisfies {\bf (A4)}.
Let $0<\alpha<1$. If the measure $\mu$ satisfies (\ref{eq1.2}), then
$$  K^{\mathcal{L}}_{\alpha, t}(x,y) \gtrsim \, \frac{t}{(t^{1/2\alpha}+d(x,y))^{\beta+2\alpha}}\ \ \forall\ \ (x,y,t)\in\mathbb M\times\mathbb M\times(0,\infty).$$
\end{proposition}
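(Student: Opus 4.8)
The plan is to mirror the proof of Proposition \ref{prop-1}, but now exploiting the \emph{lower} bounds supplied by {\bf (A4)} together with the \emph{upper} density (\ref{eq1.2}). First I would use {\bf (A4)} and the fact that (\ref{eq1.2}) gives $\mu(B(x,\sqrt{s}))\lesssim s^{\beta/2}$ (take $r=\sqrt{s}$) to deduce the pointwise lower bound
$$
p_s(x,y)\gtrsim s^{-\beta/2}e^{-Cd(x,y)^2/s}.
$$
Substituting this into the subordinative formula (\ref{eq-sub-for}) yields
$$
K^{\mathcal{L}}_{\alpha,t}(x,y)\gtrsim\int_0^\infty\eta^\alpha_t(s)\,s^{-\beta/2}e^{-Cd(x,y)^2/s}\,ds.
$$

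The key idea is then to discard most of the integral and retain only the range on which both factors admit clean lower bounds. Set $M:=\max\{t^{1/\alpha},\,d(x,y)^2\}$. For $s\ge M$ one has $s\ge t^{1/\alpha}$, so the last line of (\ref{eq-heat}) gives $\eta^\alpha_t(s)\gtrsim t/s^{1+\alpha}$; simultaneously $s\ge d(x,y)^2$ forces $d(x,y)^2/s\le 1$, hence $e^{-Cd(x,y)^2/s}\ge e^{-C}\gtrsim 1$. Restricting the integral to $[M,\infty)$ and inserting these two bounds reduces the estimate to the elementary computation
$$
K^{\mathcal{L}}_{\alpha,t}(x,y)\gtrsim t\int_M^\infty s^{-1-\alpha-\beta/2}\,ds\simeq t\,M^{-\alpha-\beta/2},
$$
the integral converging because $\alpha+\beta/2>0$.

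It remains only to rewrite $M^{-\alpha-\beta/2}$ in the desired form. Since $\sqrt{M}=\max\{t^{1/2\alpha},d(x,y)\}\simeq t^{1/2\alpha}+d(x,y)$, one has $M\simeq(t^{1/2\alpha}+d(x,y))^2$, whence
$$
t\,M^{-\alpha-\beta/2}\simeq\frac{t}{(t^{1/2\alpha}+d(x,y))^{\beta+2\alpha}},
$$
which is precisely the claimed bound. Equivalently, one may split into the two cases $t^{1/2\alpha}>d(x,y)$ and $t^{1/2\alpha}\le d(x,y)$ exactly as in Proposition \ref{prop-1}, integrating over $[t^{1/\alpha},\infty)$ in the first and over $[d(x,y)^2,\infty)$ in the second.

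I do not expect a genuine obstacle here, since the argument is essentially dual to the upper estimate already proved. The only point requiring care is the choice of the truncation level $M$: it must be large enough to activate \emph{both} the lower bound $\eta^\alpha_t(s)\simeq t/s^{1+\alpha}$ (valid only for $s\ge t^{1/\alpha}$) and the trivial lower bound on the Gaussian factor (effective only for $s\gtrsim d(x,y)^2$). Taking the maximum of the two thresholds resolves this tension and, conveniently, also produces the factor $t^{1/2\alpha}+d(x,y)$ appearing in the final denominator.
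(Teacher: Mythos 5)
Your proposal is correct and follows essentially the same route as the paper's proof: both combine {\bf (A4)} with the upper density (\ref{eq1.2}) to get $p_s(x,y)\gtrsim s^{-\beta/2}e^{-Cd(x,y)^2/s}$, truncate the subordination integral to $s\ge\max\{t^{1/\alpha},d(x,y)^2\}$ so that both $\eta^\alpha_t(s)\simeq t/s^{1+\alpha}$ and $e^{-Cd(x,y)^2/s}\gtrsim 1$ hold, and then evaluate the resulting power integral. Your final rewriting via $\sqrt{M}\simeq t^{1/2\alpha}+d(x,y)$ is just a compressed version of the paper's passage through $\min\{t^{-\beta/2\alpha},\,t/d(x,y)^{\beta+2\alpha}\}$.
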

\begin{proof}
In fact, it can be deduced from {\bf (A4)}, (\ref{eq-sub-for}) and (\ref{eq-heat}) that
\begin{eqnarray*}
K^{\mathcal{L}}_{\alpha,t}(x,y)&\gtrsim&\int^{\infty}_{\max\{t^{1/\alpha},\, d(x,y)^2\}}\frac{t}{s^{1+\alpha}}s^{-\beta/2}e^{-d(x,y)^{2}/s}ds\\
&\gtrsim&\int^{\infty}_{\max\{t^{1/\alpha},\, d(x,y)^2\}}\frac{t}{s^{1+\alpha}}s^{-\beta/2}ds\\
&\gtrsim& t \left(\max\{t^{1/\alpha},\, d(x,y)^2\}\right)^{-\beta/2-\alpha}\\
&\simeq&\min\Big\{t^{-{\beta}/{2\alpha}},\, \frac{t}{d(x,y)^{\beta+2\alpha}}\Big\}\\
&\gtrsim&\frac{t}{(t^{1/2\alpha}+d(x,y))^{\beta+2\alpha}},
\end{eqnarray*}
which   is our desired result.
\end{proof}

Upon using the stronger condition on the heat kernel $p_t(\cdot,\cdot)$, we can
obtain the following estimate.
\begin{proposition}\label{prop-3}  Assume that the heat kernel $p_t(\cdot,\cdot)$ satisfies {\bf (A1)} and {\bf (A2)}.
Let $\alpha\in(0,1)$. If the measure $\mu$ satisfies (\ref{eq1.3}),
then
$$| \partial_t K^{\mathcal{L}}_{\alpha,t}(x,y)|\lesssim \frac{1}{(t^{1/2\alpha}+d(x,y))^{\beta^{\star}+2\alpha }}\ \ \forall\ \ (x,y,t)\in\mathbb M\times\mathbb M\times(0,\infty).$$
\end{proposition}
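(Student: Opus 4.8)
The plan is to avoid differentiating $\eta_t^{\alpha}$ in $t$ directly, since (\ref{eq-heat}) only controls $\eta_t^{\alpha}$ pointwise and gives no information about its $t$-derivative. Instead I would first use the scaling relation $\eta_t^{\alpha}(s)=t^{-1/\alpha}\eta_1^{\alpha}(st^{-1/\alpha})$ from (\ref{eq-heat}), together with the change of variables $s=t^{1/\alpha}\tau$, to rewrite the subordination formula (\ref{eq-sub-for}) as
$$
K^{\mathcal{L}}_{\alpha,t}(x,y)=\int_0^\infty \eta_1^{\alpha}(\tau)\,p_{t^{1/\alpha}\tau}(x,y)\,d\tau,
$$
in which the entire $t$-dependence has been transferred into the heat kernel $p_{t^{1/\alpha}\tau}$ and the weight $\eta_1^{\alpha}$ no longer depends on $t$.

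Differentiating under the integral sign then gives
$$
\partial_t K^{\mathcal{L}}_{\alpha,t}(x,y)=\frac{1}{\alpha}t^{1/\alpha-1}\int_0^\infty \eta_1^{\alpha}(\tau)\,\tau\,\big(\partial_s p_s\big)(x,y)\big|_{s=t^{1/\alpha}\tau}\,d\tau,
$$
so that the $t$-derivative of $K^{\mathcal{L}}_{\alpha,t}$ is expressed through the $s$-derivative of $p_s$, which is exactly what assumption {\bf (A2)} controls. Applying {\bf (A2)} together with the lower density bound $\mu(B(x,r))\gtrsim r^{\beta^{\star}}$ coming from (\ref{eq1.3}), and cancelling the powers of $t$, I would arrive at
$$
|\partial_t K^{\mathcal{L}}_{\alpha,t}(x,y)|\lesssim t^{-1-\beta^{\star}/2\alpha}\int_0^\infty \eta_1^{\alpha}(\tau)\,\tau^{-\beta^{\star}/2}\,e^{-Cd(x,y)^2/(t^{1/\alpha}\tau)}\,d\tau.
$$

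Finally I would estimate this integral by splitting into the two regimes exactly as in Proposition \ref{prop-1}. When $d(x,y)\le t^{1/2\alpha}$, I bound the Gaussian factor by $1$ and use the convergence of $\int_0^\infty \tau^{-\beta^{\star}/2}\eta_1^{\alpha}(\tau)\,d\tau$ guaranteed by the third line of (\ref{eq-heat}) (with $\gamma=\beta^{\star}/2$), obtaining $|\partial_t K^{\mathcal{L}}_{\alpha,t}|\lesssim t^{-1-\beta^{\star}/2\alpha}\simeq (t^{1/2\alpha}+d(x,y))^{-\beta^{\star}-2\alpha}$. When $d(x,y)>t^{1/2\alpha}$, I first replace $\eta_1^{\alpha}(\tau)$ by $\tau^{-1-\alpha}$ via the second line of (\ref{eq-heat}) and then change variables $w=d(x,y)^2/(t^{1/\alpha}\tau)$; the integral reduces to a convergent Gamma-type integral $\int_0^\infty w^{\alpha+\beta^{\star}/2-1}e^{-Cw}\,dw$, the powers of $t$ cancel identically, and the bound becomes $d(x,y)^{-\beta^{\star}-2\alpha}\simeq (t^{1/2\alpha}+d(x,y))^{-\beta^{\star}-2\alpha}$.

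The main obstacle is the very first step: one cannot simply push $\partial_t$ inside (\ref{eq-sub-for}) and try to bound $\partial_t\eta_t^{\alpha}$, because no estimate on the derivative of the subordinator density is available. The scaling substitution is the device that resolves this, converting $\partial_t$ into a derivative of $p_s$ in its own time variable so that {\bf (A2)} becomes applicable. Along the way one should also verify that the dominating bound produced above is locally uniform in $t$, which legitimizes differentiation under the integral; after that, the two-case estimate is a routine variant of the computation already carried out in Proposition \ref{prop-1}.
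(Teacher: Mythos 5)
Your proof is correct and follows essentially the same route as the paper: rescale the subordination formula so that $K^{\mathcal{L}}_{\alpha,t}(x,y)=\int_0^\infty \eta_1^{\alpha}(\tau)p_{t^{1/\alpha}\tau}(x,y)\,d\tau$, differentiate under the integral so that {\bf (A2)} (plus the lower density bound) controls the resulting expression, and then run the same two-regime estimate as in Proposition \ref{prop-1}. The paper states this more tersely, ending with ``in a way similar to verifying Proposition \ref{prop-1}''; you have simply written out the details it leaves implicit.
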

\begin{proof}
Since
$\eta^{\alpha}_{t}(s)=\frac{1}{t^{1/\alpha}}\eta^{\alpha}_{1}(s/t^{1/\alpha})$,
via $(\ref{eq-sub-for})$, we get
$$ K^{\mathcal{L}}_{\alpha,t}(x,y)=\int_0^\infty \frac{1}{t^{1/\alpha}}\eta^{\alpha}_{1}(s/t^{1/\alpha})p_s(x,y)ds.$$
Let $r=\frac{s}{t^{1/\alpha}}$. Then
$$ K^{\mathcal{L}}_{\alpha,t}(x,y)=\int_0^\infty \eta^{\alpha}_{1}(r)p_{rt^{1/\alpha}}(x,y)dr.$$
Therefore, by {\bf (A2)}, we can get
\begin{eqnarray*}
  \left|\partial_tK^{\mathcal{L}}_{\alpha,t}(x,y)\right|&\lesssim&\frac{1}{t}\, \int_0^\infty \eta^{\alpha}_{1}(r)(rt^{1/\alpha})^{-{\beta^{\star}}/{2}}e^{-\frac{d(x,y)^2}{rt^{1/\alpha}}}dr.
\end{eqnarray*}
In a way similar to verifying Proposition \ref{prop-1}, we can prove
Proposition \ref{prop-3}.
\end{proof}

Now we  are in a position to give an estimate of the fractional
power ${\mathcal{L}}^{\theta/2} $ acting on the kernel of the
fractional heat semigroup $\{e^{-t \mathcal{L}^{\alpha}}\}_{t>0}$.
\begin{proposition}\label{prop-2}
Assume that the heat kernel $p_t(\cdot,\cdot)$ satisfies {\bf (A1)} and {\bf (A2)}.
Let $\alpha\in(0,1)$ and $\theta>0$. If the measure $\mu$ satisfies
(\ref{eq1.3}), then for $x,y\in\mathbb M$ and $t>0$,
$$|{\mathcal{L}}^{\theta/2}K^{\mathcal{L}}_{\alpha,t}(x,y)|\lesssim \frac{1}{(t^{1/2\alpha}+d(x,y))^{\beta^{\star}+\theta}}.$$
\end{proposition}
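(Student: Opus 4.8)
The plan is to reduce the estimate to a pointwise bound on $\mathcal{L}^{\theta/2}p_s(\cdot,\cdot)$ and then to integrate it against the subordinator $\eta^{\alpha}_t$, mirroring the computation already carried out in Proposition \ref{prop-1}. Since the spectral multiplier of $\mathcal{L}^{\theta/2}e^{-t\mathcal{L}^{\alpha}}$ is $\lambda^{\theta/2}e^{-t\lambda^{\alpha}}=\int_0^\infty \eta^{\alpha}_t(s)\lambda^{\theta/2}e^{-s\lambda}\,ds$, the subordinative formula (\ref{eq-sub-for}) together with the spectral theorem and Fubini yields, at the level of kernels,
$$
\mathcal{L}^{\theta/2}K^{\mathcal{L}}_{\alpha,t}(x,y)=\int_0^\infty \eta^{\alpha}_t(s)\,\mathcal{L}^{\theta/2}p_s(x,y)\,ds,
$$
the fractional power acting in the $x$-variable. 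Thus the first task is the heat-semigroup bound
$$
\big|\mathcal{L}^{\theta/2}p_s(x,y)\big|\lesssim \frac{1}{(\sqrt{s}+d(x,y))^{\beta^{\star}+\theta}}\qquad\forall\,x,y\in\mathbb M,\ s>0.
$$

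For $0<\theta/2<1$ I would obtain this through the Balakrishnan subordination of the fractional power against the heat semigroup,
$$
\mathcal{L}^{\theta/2}p_s(x,y)=\frac{1}{|\Gamma(-\theta/2)|}\int_0^\infty \big(p_s(x,y)-p_{s+r}(x,y)\big)\frac{dr}{r^{1+\theta/2}},
$$
using $e^{-r\mathcal{L}}p_s(\cdot,y)=p_{s+r}(\cdot,y)$. Writing $p_s-p_{s+r}=-\int_s^{s+r}\partial_\tau p_\tau\,d\tau$, assumption {\bf (A2)} and the lower density (\ref{eq1.3}), which gives $\mu(B(x,\sqrt{\tau}))\gtrsim \tau^{\beta^{\star}/2}$, bound the $\tau$-integrand by $\tau^{-1-\beta^{\star}/2}e^{-Cd(x,y)^2/\tau}$. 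Splitting the $r$-integral at $r=s$ and arguing exactly as in Proposition \ref{prop-1} then produces the displayed bound; note that, because $\mathcal{L}^{\theta/2}$ is nonlocal, the Gaussian factor is degraded to the polynomial decay $(\sqrt{s}+d(x,y))^{-(\beta^{\star}+\theta)}$, in agreement with the classical kernel $K^{\theta}_{\alpha,t}$. For $\theta\ge 2$ the same scheme applies after replacing the single difference by $\mathcal{L}^{\theta/2}p_s=\Gamma(k-\theta/2)^{-1}\int_0^\infty r^{k-\theta/2-1}(-1)^k\partial^k_\tau p_\tau|_{\tau=s+r}\,dr$ with $k>\theta/2$; the higher time derivatives obey $|\partial^k_\tau p_\tau(x,y)|\lesssim \tau^{-k}\mu(B(x,\sqrt{\tau}))^{-1}e^{-Cd(x,y)^2/\tau}$, which follows from the analyticity of $\{e^{-s\mathcal{L}}\}_{s>0}$ forced by {\bf (A1)} (with {\bf (A2)} being the case $k=1$).

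With the heat-semigroup bound in hand, I would insert it into the subordination integral, substitute $\sigma=s/t^{1/\alpha}$ so that $\sqrt{s}=\sqrt{\sigma}\,t^{1/2\alpha}$ and $\eta^{\alpha}_t(s)\,ds=\eta^{\alpha}_1(\sigma)\,d\sigma$, and estimate
$$
\big|\mathcal{L}^{\theta/2}K^{\mathcal{L}}_{\alpha,t}(x,y)\big|\lesssim \int_0^\infty \eta^{\alpha}_1(\sigma)\,\frac{d\sigma}{(\sqrt{\sigma}\,t^{1/2\alpha}+d(x,y))^{\beta^{\star}+\theta}}.
$$
If $d(x,y)\le t^{1/2\alpha}$ I drop $d(x,y)$ in the denominator and use the moment bound $\int_0^\infty \sigma^{-(\beta^{\star}+\theta)/2}\eta^{\alpha}_1(\sigma)\,d\sigma<\infty$ (the third line of (\ref{eq-heat}) with $\gamma=(\beta^{\star}+\theta)/2$) to get $t^{-(\beta^{\star}+\theta)/2\alpha}\simeq (t^{1/2\alpha}+d(x,y))^{-(\beta^{\star}+\theta)}$; if $d(x,y)> t^{1/2\alpha}$ I drop $\sqrt{\sigma}\,t^{1/2\alpha}$ and use the total mass $\int_0^\infty \eta^{\alpha}_1(\sigma)\,d\sigma=1$, giving $d(x,y)^{-(\beta^{\star}+\theta)}\simeq (t^{1/2\alpha}+d(x,y))^{-(\beta^{\star}+\theta)}$. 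Combining the two cases finishes the proof.

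The main obstacle is the heat-semigroup estimate of the middle paragraph: one must see that applying the nonlocal operator $\mathcal{L}^{\theta/2}$ trades the Gaussian tail of $p_s$ for exactly the polynomial order $\beta^{\star}+\theta$, uniformly in $s$, and---for $\theta\ge 2$---that the higher-order time-derivative Gaussian bounds are available from {\bf (A1)}. Once that pointwise bound is secured, the final integration against $\eta^{\alpha}_1$ is routine and parallels Proposition \ref{prop-1}.
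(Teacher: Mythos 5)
Your argument reaches the stated bound by a genuinely different route from the paper's. The paper never estimates $\mathcal{L}^{\theta/2}p_s$ at all: writing $\theta/2=\alpha\sigma$, it uses the Balakrishnan-type formula (\ref{eq-2.1}) \emph{relative to the fractional semigroup}, so that $\mathcal{L}^{\theta/2}K^{\mathcal{L}}_{\alpha,t}(x,y)$ becomes the double integral $\frac{1}{\Gamma(-\alpha)}\int_0^\infty\!\int_0^s \partial_r K^{\mathcal{L}}_{\alpha,t+r}(x,y)\,\frac{dr\,ds}{s^{1+\sigma}}$; the integrand is controlled by Proposition \ref{prop-3} (this is where {\bf (A2)} enters), and the $(r,s)$-integral is then split at $s=t+d(x,y)^{2\alpha}$ into the two pieces $I_1+I_2$. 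You instead push $\mathcal{L}^{\theta/2}$ through the subordination formula (\ref{eq-sub-for}) onto the heat kernel itself, prove the intermediate estimate $|\mathcal{L}^{\theta/2}p_s(x,y)|\lesssim(\sqrt{s}+d(x,y))^{-(\beta^{\star}+\theta)}$ by the Balakrishnan formula relative to $e^{-r\mathcal{L}}$ together with {\bf (A2)} and (\ref{eq1.3}), and only then integrate against $\eta^{\alpha}_1$ using the moment condition in (\ref{eq-heat}). Your route buys a reusable heat-kernel estimate (a fractional-power analogue of {\bf (A2)}) plus a very short final integration, and it bypasses Proposition \ref{prop-3} entirely; the paper's route buys economy, recycling Proposition \ref{prop-3} and needing no information about $\mathcal{L}^{\theta/2}p_s$, at the cost of a more involved two-region integral estimate. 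Both proofs use identical hypotheses and both hinge on the same Gaussian-to-polynomial conversion (the substitution argument of Proposition \ref{prop-1}) that you correctly identify as the main obstacle.

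One caveat. Your single-difference Balakrishnan representation is valid only for $\theta/2\in(0,1)$, and for $\theta\ge 2$ you appeal to Gaussian bounds on the higher derivatives $\partial_\tau^k p_\tau$ ``by analyticity''. That self-improvement (Gaussian upper bound implies Gaussian bounds on all time derivatives) is a known but nontrivial theorem, ordinarily proved via complex-time kernel bounds and under volume doubling, which is not among the standing assumptions here; as written, that branch is asserted rather than proved, so your argument is complete only for $\theta\in(0,2)$. In fairness, the paper's proof carries the same restriction in disguise: the validity of (\ref{eq-2.1}) and the convergence of $I_1$ (which produces the factor $[t+d(x,y)^{2\alpha}]^{1-\sigma}$) require $\sigma<1$, i.e. $\theta<2\alpha$, even though the proposition is stated for all $\theta>0$. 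So your proposal matches the paper's actual level of rigor; neither argument handles large $\theta$ without some additional iteration.
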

\begin{proof} Let $\sigma$ be a positive number such that $\alpha \sigma \in
(0,1]$. By (\ref{eq-fractional operator}), we can see
\begin{equation}\label{eq-2.1}
 \mathcal{L}^{\alpha\sigma}=\frac{1}{\Gamma(-\alpha)}\int_0^\infty
 \big[e^{-t\mathcal{L}^\alpha}f(x)-f(x)\big]\frac{dt}{t^{1+\sigma}},
\end{equation}
whence reaching
\begin{eqnarray*}
\Big|{\mathcal{L}}^{\alpha\sigma}K^{\mathcal{L}}_{\alpha,t}(x,y)\Big|&=&\Big|\frac{1}{\Gamma(-\alpha)}
\int^{\infty}_{0}\Big[e^{-s\mathcal{L}^\alpha}(K^{\mathcal{L}}_{\alpha,t})(x,y)-K^{\mathcal{L}}_{\alpha,t}(x,y)\Big]\frac{ds}{s^{1+\sigma}}\Big|\\
&=&\Big|\frac{1}{\Gamma(-\alpha)}\int^{\infty}_{0}\int^{s}_{0}\partial_{r}e^{-r\mathcal{L}^\alpha}(K^{\mathcal{L}}_{\alpha,t})(x,y)\frac{dr ds}{s^{1+\sigma}}\Big|.
\end{eqnarray*}
We apply Proposition \ref{prop-3} to obtain
\begin{eqnarray*}
\Big|{\mathcal{L}}^{\alpha\sigma}K^{\mathcal{L}}_{\alpha,t}(x,y)\Big|&\lesssim&\frac{1}{|\Gamma(-\alpha)|}\int^{\infty}_0\int^{s}_{0}\Big|\partial_{r}
e^{-r\mathcal{L}^\alpha}(K^{\mathcal{L}}_{\alpha,t})(x,y)\Big|
\frac{dr ds}{s^{1+\sigma}}\\
&\lesssim&\int^{\infty}_{0}\int^{s}_{0}\frac{1}{[(r+t)^{1/2\alpha}+d(x,y)]^{\beta^{\star}+2\alpha}}\frac{drds}{s^{1+\sigma}}\\
&:=&I_{1}+I_{2},
\end{eqnarray*}
where
$$I_{1}:=\int^{t+d(x,y)^{2\alpha}}_{0}\int^{s}_{0}\frac{1}{[(r+t)^{1/2\alpha}+d(x,y)]^{\beta^{\star}+2\alpha}}\frac{drds}{s^{1+\sigma}}$$
and
$$I_{2}:=\int_{t+d(x,y)^{2\alpha}}^{\infty}\int^{s}_{0}\frac{1}{[(r+t)^{1/2\alpha}+d(x,y)]^{\beta^{\star}+2\alpha}}\frac{drds}{s^{1+\sigma}}.$$
For $I_{1}$, we have
\begin{eqnarray*}
I_{1}&\lesssim&\int^{t+d(x,y)^{2\alpha}}_{0}\int^{s}_{0}\frac{1}{[t+d(x,y)^{2\alpha}]^{\beta^{\star}/2\alpha+1}}\frac{ds}{s^{\sigma}}\\
&\lesssim&\frac{1}{(t+d(x,y)^{2\alpha})^{\beta^{\star}/2\alpha+1}}[t+d(x,y)^{2\alpha}]^{1-\sigma}\\
&\lesssim&\frac{1}{(t+d(x,y)^{2\alpha})^{\beta^{\star}/2\alpha+\sigma}}.
\end{eqnarray*}
For $I_{2}$, we obtain
\begin{align*}
I_{2}&\lesssim\int_{t+d(x,y)^{2\alpha}}^{\infty}\int^{s}_{0}\frac{1}{[(r+t)+d(x,y)^{2\alpha}]^{\beta^{\star}/2\alpha+1}}\frac{drds}{s^{1+\sigma}}\\
&\lesssim\int_{t+d(x,y)^{2\alpha}}^{\infty}\Big[-(s+t+d(x,y)^{2\alpha})^{-\beta^{\star}/2\alpha}+(t+d(x,y)^{2\alpha})^{-\beta^{\star}/2\alpha}\Big]\frac{ds}{s^{1+\sigma}}\\
&\lesssim\int_{t+d(x,y)^{2\alpha}}^{\infty}(t+d(x,y)^{2\alpha})^{-\beta^{\star}/2\alpha}\frac{ds}{s^{1+\sigma}}\\
&\lesssim \frac{1}{(t+d(x,y)^{2\alpha})^{\beta^{\star}/2\alpha+\sigma}}.
\end{align*}
It follows from the estimates of $I_{i}, i=1,2$, that
$$\Big|{\mathcal{L}}^{\alpha\sigma}K^{\mathcal{L}}_{\alpha,t}(x,y)\Big|\lesssim\frac{1}{(t+d(x,y)^{2\alpha})^{\beta^{\star}/2\alpha+\sigma}}\lesssim \frac{1}{(t^{1/2\alpha}+d(x,y))^{\beta^{\star}+2\alpha \sigma}}.$$
Then
$$|{\mathcal{L}}^{\theta/2}K^{\mathcal{L}}_{\alpha,t}(x,y)|\lesssim \frac{1}{(t^{1/2\alpha}+d(x,y))^{\beta^{\star}+\theta}}$$
is obtained via letting $\alpha\sigma={\theta}/{2}$.  This completes
the proof of Proposition \ref{prop-2}.
\end{proof}
\begin{remark}
Let $\mathbb{M}=\mathbb{R}^{n}$ and $\mathcal{L}=-\Delta$.
Proposition \ref{prop-2} agrees with \cite[Lemma 2.2]{miao}.
\end{remark}
\section{Estimating solutions of the fractional dissipative equation}\label{sec-3}
\subsection{Estimation - part A}\label{sec-31}
In this part, we give some basic space-time estimates for the solution to:
\begin{equation}\label{eq-4.1}
\begin{cases}
\partial_{t}u(x,t)+{\mathcal{L}}^{\alpha}u(x,t)=f(t,x) &\ \forall\ (x,t)\in\mathbb{M}\times (0, \infty);\\
u(x,0)=\varphi(x)&\ \forall\ x\in\mathbb{M}.
\end{cases}
\end{equation}
For the case $\mathbb{M}=\mathbb{R}^{n}$ and $\mathcal{L}=-\Delta$,
the space-time estimates for (\ref{eq-4.1}) have been investigated
by Miao-Yuan-Zhang \cite{miao}. By the Duhamel principle, the
solution to   (\ref{eq-4.1}) can be written in the integral form as
\begin{eqnarray*}
u(x,t)&=&e^{-t{\mathcal{L}}^{\alpha}}(\varphi)(x)+\int^{t}_{0}e^{-(t-\tau){\mathcal{L}}^{\alpha}}(f)(x,\tau)d\tau\\
&:=&e^{-t{\mathcal{L}}^{\alpha}}(\varphi)(x)+G(f)(t,x).
\end{eqnarray*}
Before we give the main results in this section, we state an
integral inequality which can be seen as a generalization of Young's
convolution inequality in Euclidean spaces (\cite[Theorem
0.3.1]{sogge} ).
\begin{lemma}\label{le-Young-inequality}
Assume that $\mathbb{X}$ and $\mathbb{Y}$ are measure spaces. Let
$K: \mathbb{X}\times \mathbb{Y}\rightarrow \mathbb{R}$ and
$1/q+1/r=1/p+1$. If
$$\sup_y\int_{\mathbb{X}}|K(x,y)|^{q}d\mu(x)\lesssim 1$$
and
$$\sup_x\int_{\mathbb{Y}}|K(x,y)|^{q}d\mu(y)\lesssim 1,$$
then
$$\int_{\mathbb{X}}\Big(\int_{\mathbb{Y}}K(x,y)f(y)d\mu(y)\Big)^{p}d\mu(x)\lesssim\Big(\int_{\mathbb{Y}}|f(y)|^{r}d\mu(y)\Big)^{p/r}.$$
\end{lemma}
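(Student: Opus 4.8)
The plan is to recognize this as the generalized Young (Schur-type) inequality and to prove it by a single application of H\"older's inequality with three exponents, followed by Tonelli's theorem. Write $Tf(x):=\int_{\mathbb{Y}}K(x,y)f(y)\,d\mu(y)$; it then suffices to bound $\|Tf\|_{L^p(\mathbb{X})}$ by $\|f\|_{L^r(\mathbb{Y})}$. The starting point is the pointwise factorization of the integrand
$$|K(x,y)f(y)|=\Big(|K(x,y)|^q|f(y)|^r\Big)^{1/p}\cdot|K(x,y)|^{1-q/p}\cdot|f(y)|^{1-r/p},$$
in which the powers of $|K|$ sum to $1$ and the powers of $|f|$ sum to $1$, so the identity is valid.

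First I would apply H\"older's inequality in the variable $y$ to these three factors, using exponents $p$, $s$, $t$ determined by $\frac1s=\frac1q-\frac1p$ and $\frac1t=\frac1r-\frac1p$. The hypothesis $\frac1q+\frac1r=\frac1p+1$ is exactly what guarantees $\frac1p+\frac1s+\frac1t=1$, so the three-fold H\"older is legitimate; moreover it forces $(1-q/p)s=q$ and $(1-r/p)t=r$, so the second and third factors reproduce the $L^q$-norm of $K(x,\cdot)$ and the $L^r$-norm of $f$, respectively. Invoking the uniform column bound $\sup_x\int_{\mathbb{Y}}|K(x,y)|^q\,d\mu(y)\lesssim 1$ to discard the middle factor, this yields the pointwise estimate
$$|Tf(x)|^p\lesssim\Big(\int_{\mathbb{Y}}|K(x,y)|^q|f(y)|^r\,d\mu(y)\Big)\cdot\|f\|_{L^r(\mathbb{Y})}^{\,p-r}.$$

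Then I would integrate this in $x$ over $\mathbb{X}$ and apply Tonelli's theorem to exchange the order of integration in the remaining double integral; the inner integral $\int_{\mathbb{X}}|K(x,y)|^q\,d\mu(x)$ is controlled by the uniform row bound $\sup_y\int_{\mathbb{X}}|K(x,y)|^q\,d\mu(x)\lesssim 1$, leaving $\int_{\mathbb{Y}}|f(y)|^r\,d\mu(y)=\|f\|_{L^r}^r$. Combined with the factor $\|f\|_{L^r}^{\,p-r}$ this gives $\|Tf\|_{L^p}^p\lesssim\|f\|_{L^r}^p$, which is the claimed inequality after rewriting $\|f\|_{L^r}^p=(\int|f|^r)^{p/r}$.

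The routine part is only the exponent bookkeeping, so the genuine points to watch are these. One must verify $\frac1s=\frac1q-\frac1p\ge 0$ and $\frac1t=\frac1r-\frac1p\ge 0$; using the scaling relation these reduce to $\frac1s=1-\frac1r$ and $\frac1t=1-\frac1q$, hence hold automatically from $p,q,r\ge 1$, and also keep $s,t\in[1,\infty]$. One must handle the degenerate endpoints $q=p$ (equivalently $r=1$) and $r=p$ (equivalently $q=1$), where the corresponding factor is identically $1$ and a two-exponent H\"older is used instead; the endpoint $p=\infty$ is the direct H\"older bound $|Tf(x)|\le\|K(x,\cdot)\|_{L^q}\|f\|_{L^r}$. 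Finally one confirms that Tonelli applies, which is immediate since all integrands are nonnegative and measurable. The main --- and essentially only --- obstacle is aligning the three-exponent split precisely with the hypothesis $\frac1q+\frac1r=\frac1p+1$; once that matching is in place the estimate follows mechanically.
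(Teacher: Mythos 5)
Your proof is correct. The paper itself gives no proof of this lemma --- it is quoted as a known result from \cite[Theorem 0.3.1]{sogge} --- and your three-exponent H\"older factorization followed by Tonelli, with the exponents $1/s=1/q-1/p$ and $1/t=1/r-1/p$ matched to the scaling relation $1/q+1/r=1/p+1$, is precisely the standard proof of that cited theorem, including the correct treatment of the degenerate endpoints $q=1$, $r=1$, and $p=\infty$.
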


The following lemma can be deduced from Propositions \ref{prop-1}\
\&\ \ref{prop-2} immediately.
\begin{lemma}\label{le-4.1}
Let $1\leq r\leq p\leq\infty$ and   $\varphi\in L^{r}(\mathbb{M})$.
\begin{itemize}
\item[\rm(i)] Assume that the heat kernel $p_t(\cdot,\cdot)$ satisfies {\bf (A1)} and  the measure $\mu$ satisfies
(\ref{eq1.3}). For $\alpha\in(0,1)$ and $t>0$, we have
$$\|e^{-t{\mathcal{L}}^{\alpha}}(\varphi)\|_{L^p(\mathbb{M})}\lesssim t^{-\beta^{\star}(1/r-1/p)/2\alpha}\|\varphi\|_{L^r(\mathbb{M})}.$$
\item[\rm(ii)] Assume that the heat kernel $p_t(\cdot,\cdot)$ satisfies {\bf (A1)}-{\bf (A2)} and the measure $\mu$ satisfies
(\ref{eq1.3}). For $\theta>0$, $\alpha\in(0,1)$ and $t>0$, we have
$$\|{\mathcal{L}}^{\theta/2}e^{-t{\mathcal{L}}^{\alpha}}(\varphi)\|_{L^p(\mathbb{M})}\lesssim t^{-\theta-\beta^{\star}(1/r-1/p)/2\alpha}\|\varphi\|_{L^r(\mathbb{M})}.$$
\end{itemize}
\end{lemma}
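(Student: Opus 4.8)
The plan is to treat both operators as integral operators against the kernels supplied by Propositions~\ref{prop-1} and~\ref{prop-2}, and to read off the $L^r(\mathbb{M})\to L^p(\mathbb{M})$ bounds from the generalized Young inequality, Lemma~\ref{le-Young-inequality}. Fix $q$ by $1/q+1/r=1/p+1$, so that $1-1/q=1/r-1/p$. Writing $\mathcal{K}(x,y)$ for the relevant kernel ($K^{\mathcal{L}}_{\alpha,t}$ in part (i), $\mathcal{L}^{\theta/2}K^{\mathcal{L}}_{\alpha,t}$ in part (ii)), both hypotheses of Lemma~\ref{le-Young-inequality} reduce to controlling $\sup_x\|\mathcal{K}(x,\cdot)\|_{L^q(\mathbb{M})}$: the symmetry of $p_s$, hence of $K^{\mathcal{L}}_{\alpha,t}$ and of $\mathcal{L}^{\theta/2}K^{\mathcal{L}}_{\alpha,t}$, makes the $\sup_x$ and $\sup_y$ conditions coincide. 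Since Lemma~\ref{le-Young-inequality} is stated for a normalized kernel, I would apply it to $\mathcal{K}/A(t)$ with $A(t):=\sup_x\|\mathcal{K}(x,\cdot)\|_{L^q(\mathbb{M})}$; linearity then gives the operator-norm bound $\lesssim A(t)$, and everything reduces to computing $A(t)$.

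For part (i) the key point is that $K^{\mathcal{L}}_{\alpha,t}$ is essentially stochastic: from (\ref{eq-sub-for}), the bound (\ref{a1.2}), and the scaling in (\ref{eq-heat}) one has $\int_{\mathbb{M}}K^{\mathcal{L}}_{\alpha,t}(x,y)\,d\mu(y)\le\int_0^\infty\eta^\alpha_t(s)\big(\int_{\mathbb{M}}p_s(x,y)\,d\mu(y)\big)\,ds\le\int_0^\infty\eta^\alpha_1(u)\,du=:c_\alpha<\infty$, a constant independent of $t$; thus $\|K^{\mathcal{L}}_{\alpha,t}(x,\cdot)\|_{L^1(\mathbb{M})}\lesssim 1$ uniformly in $x$. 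Proposition~\ref{prop-1} supplies the companion sup bound $\|K^{\mathcal{L}}_{\alpha,t}(x,\cdot)\|_{L^\infty(\mathbb{M})}\lesssim t^{-\beta^{\star}/2\alpha}$. Interpolating through $\int|\mathcal{K}|^q\le\|\mathcal{K}\|_\infty^{q-1}\|\mathcal{K}\|_1$ gives $A(t)\lesssim (t^{-\beta^{\star}/2\alpha})^{1-1/q}=t^{-\beta^{\star}(1/r-1/p)/2\alpha}$, which is exactly the asserted power; note this uses the lower density only through Proposition~\ref{prop-1}, the $L^1$ mass coming for free.

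For part (ii) I would start from $|\mathcal{L}^{\theta/2}K^{\mathcal{L}}_{\alpha,t}(x,y)|\lesssim (t^{1/2\alpha}+d(x,y))^{-(\beta^{\star}+\theta)}$ of Proposition~\ref{prop-2}. Evaluating at $d(x,y)=0$ gives $\|\mathcal{L}^{\theta/2}K^{\mathcal{L}}_{\alpha,t}(x,\cdot)\|_{L^\infty(\mathbb{M})}\lesssim t^{-(\beta^{\star}+\theta)/2\alpha}$, while integrating in $y$ over the dyadic annuli $\{2^k t^{1/2\alpha}\le t^{1/2\alpha}+d(x,y)<2^{k+1}t^{1/2\alpha}\}$ and inserting the density estimates for $\mu(B(x,r))$ produces $\|\mathcal{L}^{\theta/2}K^{\mathcal{L}}_{\alpha,t}(x,\cdot)\|_{L^1(\mathbb{M})}\lesssim t^{-\theta/2\alpha}$, the hypothesis $\theta>0$ being exactly what makes the annular sum converge. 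Interpolating the two as before yields $A(t)\lesssim (t^{-(\beta^{\star}+\theta)/2\alpha})^{1-1/q}(t^{-\theta/2\alpha})^{1/q}=t^{-\theta/2\alpha}\,t^{-\beta^{\star}(1/r-1/p)/2\alpha}$, the factor $t^{-\theta/2\alpha}$ being the scaling contributed by $\mathcal{L}^{\theta/2}$; feeding $A(t)$ into Lemma~\ref{le-Young-inequality} finishes the estimate.

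The main obstacle is the $L^1$ computation in part (ii): unlike part (i) there is no free mass bound for $\mathcal{L}^{\theta/2}K^{\mathcal{L}}_{\alpha,t}$, so the decay $t^{-\theta/2\alpha}$ must be extracted directly from the tail of $(t^{1/2\alpha}+d(x,y))^{-(\beta^{\star}+\theta)}$. This is the only step where the measure of balls must be controlled quantitatively, and where the interplay between $\theta$ and the density exponents is delicate; once it is in hand, the remaining interpolation and rescaling are routine, and the two assertions follow at once.
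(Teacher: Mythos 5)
Your proposal is correct, and its skeleton is the same as the paper's: both proofs fix $q$ by $1/q+1/r=1/p+1$ and feed kernel bounds into the generalized Young inequality, Lemma \ref{le-Young-inequality}, using the symmetry of the kernel so that one supremum condition suffices. Where you differ is in how the $L^q$ bounds on the kernel are obtained. The paper raises the pointwise bounds of Propositions \ref{prop-1} and \ref{prop-2} to the $q$-th power and integrates directly over dyadic annuli, in both parts; you instead compute the endpoint norms $\|\mathcal{K}(x,\cdot)\|_{L^1}$ and $\|\mathcal{K}(x,\cdot)\|_{L^\infty}$ and use $\int|\mathcal{K}|^q\le\|\mathcal{K}\|_\infty^{q-1}\|\mathcal{K}\|_1$. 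This is a genuine, if modest, variation, and in part (i) it buys something real: your $L^1$ bound comes from subordination, (\ref{eq-sub-for}), (\ref{eq-heat}), and the stochasticity bound (\ref{a1.2}), so it needs no control of ball volumes from above, whereas the paper's annuli computation tacitly invokes $\mu(B(x,r))\lesssim r^{\beta^{\star}}$ (in bounding $I_0$ and $I_k$), an upper-density bound that does not follow from the stated hypothesis (\ref{eq1.3}). In part (ii) this advantage disappears: your $L^1$ estimate of $\mathcal{L}^{\theta/2}K^{\mathcal{L}}_{\alpha,t}$ requires the same dyadic-annuli computation with the same tacit upper bound on ball measures, so there the two arguments stand on identical footing. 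Finally, note that the exponent you derive in (ii), $t^{-\theta/2\alpha-\beta^{\star}(1/r-1/p)/2\alpha}$, is exactly what the paper's own proof establishes (and what scaling dictates, as one checks for $\mathcal{L}=-\Delta$ on $\mathbb{R}^n$); the exponent $-\theta$ appearing in the displayed statement of the lemma is evidently a typo, so the mismatch between your conclusion and the literal statement is not a defect of your argument.
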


\begin{proof}
We begin with the proof of (i). Let $q$ obey $1/r+1/q=1/p+1$. By
Proposition \ref{prop-1}, we get
\begin{eqnarray*}
|e^{-t{\mathcal{L}}^{\alpha}}\varphi(x)|&=&\Big|\int_{\mathbb{M}}K^{\mathcal{L}}_{\alpha,t}(x,y)\varphi(y)d\mu(y)\Big|\\
&\leq&\int_{\mathbb{M}}\Big|K^{\mathcal{L}}_{\alpha,t}(x,y)\Big||\varphi(y)|d\mu(y)\\
&\lesssim& \int_{\mathbb{M}}\frac{
t}{(t^{1/2\alpha}+d(x,y))^{\beta^{\star}+2\alpha}}|\varphi(y)|d\mu(y).
\end{eqnarray*}
In what follows,  we estimate the integral
$$\int_{\mathbb{M}}\frac{t^{q}}{(t^{1/2\alpha}+d(x,y))^{q(\beta^{\star}+2\alpha)}}d\mu(y)$$
via writing
$$
\begin{cases}
\int_{\mathbb{M}}\frac{t^{q}}{(t^{1/2\alpha}+d(x,y))^{q(\beta^{\star}+2\alpha)}}d\mu(y):=
I_{0}+\displaystyle \sum^{\infty}_{k=1}I_{k};\\
I_{0}:=\int_{d(x,y)<2t^{1/2\alpha}}\frac{t^{q}}{(t^{1/2\alpha}+d(x,y))^{q(\beta^{\star}+2\alpha)}}d\mu(y);\\
I_{k}:=\int_{2^{k}t^{1/2\alpha}<d(x,y)<2^{k+1}t^{1/2\alpha}}\frac{t^{q}}{(t^{1/2\alpha}+d(x,y))^{q(\beta^{\star}+2\alpha)}}d\mu(y).
\end{cases}
$$

For $I_{0}$, a direct computation gives
$$I_{0}\leq t^{q}t^{-q(\beta^{\star}+2\alpha)/2\alpha}\mu(B(x, 2t^{1/2\alpha}))\lesssim t^{q}t^{-q(\beta^{\star}+2\alpha)/2\alpha}t^{\beta^{\star}/2\alpha}\lesssim t^{\beta^{\star}(1-q)/2\alpha}.$$
For $k\geq 1$, we can get
\begin{eqnarray*}
I_{k}&\leq&\int_{2^{k}t^{1/2\alpha}<d(x,y)<2^{k+1}t^{1/2\alpha}}\frac{t^{q}}{(t^{1/2\alpha}+2^{k}t^{1/2\alpha})^{q(\beta^{\star}+2\alpha)}}d\mu(y)\\
&\leq&\frac{t^{q}}{(t^{1/2\alpha}+2^{k}t^{1/2\alpha})^{q(\beta^{\star}+2\alpha)}}\mu(B(x, 2^{k+1}t^{1/2\alpha}))\\
&\leq&\frac{1}{2^{k(q\beta^{\star}+2q\alpha-\beta^{\star})}}t^{\beta^{\star}(1-q)/2\alpha},
\end{eqnarray*}
whence
\begin{align*}
&\int_{\mathbb{M}}\frac{t^{q}}{(t^{1/2\alpha}+d(x,y))^{q(\beta^{\star}+2\alpha)}}d\mu(y)\\
&\ \lesssim t^{\beta^{\star}(1-q)/2\alpha}\Big[1+\sum^{\infty}_{k=1}\frac{1}{2^{k(q\beta^{\star}+2q\alpha-\beta^{\star})}}\Big]\\
&\ \lesssim t^{\beta^{\star}(1-q)/2\alpha}.
\end{align*}
Similarly, we can show
$$\int_{\mathbb{M}}\frac{t^{q}}{(t^{1/2\alpha}+d(x,y))^{q(\beta^{\star}+2\alpha)}}d\mu(y) \lesssim t^{\beta^{\star}(1-q)/2\alpha}.$$
By Lemma \ref{le-Young-inequality}, we obtain
\begin{align*}
&\|e^{-t{\mathcal{L}}^{\alpha}}\varphi\|_{L^p(\mathbb{M})}\\
&\ \lesssim \Big[\int_{\mathbb{M}}\Big(\int_{\mathbb{M}}\frac{t}{(t^{1/2\alpha}+d(x,y))^{\beta^{\star}+2\alpha}}|\varphi(y)|d\mu(y)\Big)^{p}d\mu(x)\Big]^{1/p}\\
&\ \lesssim t^{\beta^{\star}(1/p-1/r)/2\alpha}\|\varphi\|_{L^r(\mathbb{M})}.
\end{align*}

Now we begin to  prove (ii). In a similar way to verify (i), we have
\begin{eqnarray*}
&&\int_{\mathbb{M}}\frac{d\mu(y)}{[t^{1/2\alpha}+d(x,y)]^{q(\beta^{\star}+\theta)}}\\
&&=\Big[\int_{d(x,y)<2t^{1/2\alpha}}+\sum^{\infty}_{k=1}\int_{2^{k}t^{1/2\alpha}<d(x,y)<2^{k+1}t^{1/2\alpha}}\Big]
\frac{d\mu(y)}{[t^{1/2\alpha}+d(x,y)]^{q(\beta^{\star}+\theta)}}\\
&&\lesssim  t^{-q\theta/2\alpha+\beta^{\star}(1-q)/2\alpha}\Big[1+\sum^{\infty}_{k=1}\frac{1}{2^{k[q(\beta^{\star}+\theta)-\beta^{\star}]}}\Big]\\
&&\lesssim  t^{-q\theta/2\alpha+\beta^{\star}(1-q)/2\alpha},
\end{eqnarray*}
thereby getting
$$\int_{\mathbb{M}}\frac{d\mu(x)}{[t^{1/2\alpha}+d(x,y)]^{q(\beta^{\star}+2\alpha v)}}\lesssim t^{-q\theta/2\alpha+\beta^{\star}(1-q)/2\alpha}.$$
By Proposition \ref{prop-2}, we use Lemma \ref{le-Young-inequality} again to get
\begin{eqnarray*}
\|{\mathcal{L}}^{\theta/2}K^{\mathcal{L}}_{\alpha,t}\varphi\|_{L^p(\mathbb{M})}&\leq&\Big[\int_{\mathbb{M}}\Big(\int_{\mathbb{M}}
\frac{|\varphi(r)|}{(t^{1/2\alpha}+|x|)^{\beta^{\star}+\theta}}d\mu(y)\Big)^{p}d\mu(x)\Big]^{1/p}\\
&\lesssim &t^{-\theta/2\alpha-\beta^{\star}(1/r-1/p)/2\alpha}\|\varphi\|_{L^r(\mathbb{M})}.
\end{eqnarray*}
Hence,  Lemma \ref{le-4.1} is proved.
\end{proof}

\begin{definition}
The triplet $(q,p,r)$ is called an admissible triplet or a
generalized admissible triplet provided that
$1/q=\beta^{\star}(1/r-1/p)/2\alpha$,  where
$$1<r\leq p<
\begin{cases}
{\beta^{\star} r}/{(\beta^{\star}-2\alpha)}, & \beta^{\star}>2r\alpha,\\
\infty,& \beta^{\star}\leq 2r\alpha
\end{cases}$$
or $1/q=\beta^{\star}(1/r-1/p)/2\alpha$,  where
$$1<r\leq p<
\begin{cases}
{\beta^{\star} r}/{(\beta^{\star}-2\alpha r)}, &  \beta^{\star}>2r\alpha,\\
\infty,&  \beta^{\star}\leq 2r\alpha.
\end{cases}$$
\end{definition}

Let $\mathbb{X}$ be a Banach space and let $I=[0,T)$. We define the
time-weighted space-time Banach space $C_{\sigma}(I; \mathbb{X})$
and the corresponding homogeneous space $\dot{C}_{\sigma}(I;
\mathbb{X})$ as follows:
$$C_{\sigma}(I; \mathbb{X}):=\Big\{f\in C(I; \mathbb{X}):\ \|f\|_{C_{\sigma}(I; \mathbb{X})}=\sup_{t\in I}t^{1/\sigma}\|f\|_{\mathbb{X}}<\infty\Big\}$$
and
$$\dot{C}_{\sigma}(I; \mathbb{X}):=\Big\{f\in C_{\sigma}(I; \mathbb{X}):\ \lim_{t\rightarrow 0+}t^{1/\sigma}\|f\|_{\mathbb{X}}=0\Big\}.$$

Now we give a space-time estimate for the term $e^{-t{\mathcal{L}}^{\alpha}}(\varphi)$. In the sequel, for a Banach space $\mathbb{X}$, we denote by $C_{b}(I; \mathbb{X})$ the
 space of all bounded continuous functions from $I$ to $\mathbb{X}$.

\begin{theorem}\label{th-1.6} Assume that the heat kernel $p_t(\cdot,\cdot)$ satisfies {\bf (A1)} and  the measure $\mu$ satisfies
(\ref{eq1.3}).
\begin{itemize}
\item[\rm(i)]  Let $(q,p,r)$ be any admissible triplet and let
$\varphi\in L^{r}(\mathbb{M})$. Then for $0<T\leq \infty$,
$e^{-t{\mathcal{L}}^{\alpha}}(\varphi)\in L^{q}(I; L^{p})\cap
C_{b}(I; L^{r})$ with the estimate
$$\|e^{-t{\mathcal{L}}^{\alpha}}(\varphi)(x)\|_{L^{q}(I; L^{p})}\lesssim \|\varphi\|_{L^{r}(\mathbb{M})}.$$

\item[\rm(ii)]  Let $(q,p,r)$ be any generalized admissible triplet
and let $\varphi\in L^{r}(\mathbb{M})$. Then for $0<T\leq \infty$,
$e^{-t{\mathcal{L}}^{\alpha}}(\varphi)\in C_{q}(I; L^{p})\cap
C_{b}(I; L^{r})$ with the estimate
$$\|e^{-t{\mathcal{L}}^{\alpha}}(\varphi)(x)\|_{C_{q}(I; L^{p})}\lesssim\|\varphi\|_{L^{r}(\mathbb{M})}.$$
\end{itemize}
\end{theorem}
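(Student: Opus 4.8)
The plan is to treat separately the two memberships asserted in each part: the space-time integrability (the $L^{q}(I;L^{p})$ bound in (i), the weighted $C_{q}(I;L^{p})$ bound in (ii)) and the membership in $C_{b}(I;L^{r})$ common to both. Of these, part (ii) together with the $C_{b}$ assertions are soft, while the genuine $L^{q}$ space-time estimate in (i) is the core difficulty.

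I would first dispose of part (ii). Since a generalized admissible triplet satisfies $1/q=\beta^{\star}(1/r-1/p)/2\alpha$, Lemma \ref{le-4.1}(i) gives, for every $t\in I$,
$$t^{1/q}\|e^{-t\mathcal{L}^{\alpha}}(\varphi)\|_{L^{p}(\mathbb{M})}\lesssim t^{1/q-\beta^{\star}(1/r-1/p)/2\alpha}\|\varphi\|_{L^{r}(\mathbb{M})}=\|\varphi\|_{L^{r}(\mathbb{M})}.$$
Taking the supremum over $t\in I$ yields $\|e^{-t\mathcal{L}^{\alpha}}(\varphi)\|_{C_{q}(I;L^{p})}\lesssim\|\varphi\|_{L^{r}(\mathbb{M})}$; that $t\mapsto e^{-t\mathcal{L}^{\alpha}}(\varphi)$ is continuous into $L^{p}$ (so that it lies in $C_{q}$ and not merely in the weighted $L^{\infty}$ space) follows from the semigroup property and the same smoothing bound.

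For the $C_{b}(I;L^{r})$ membership I would use only semigroup theory. The bound (\ref{a1.2}) and the symmetry of $p_{s}$ show that $P_{s}=e^{-s\mathcal{L}}$ is a contraction on $L^{1}$ and on $L^{\infty}$, hence on every $L^{r}$ by interpolation; feeding this into the subordination formula (\ref{eq-sub-for}) and using that $\int_{0}^{\infty}\eta_{1}^{\alpha}(s)\,ds<\infty$ by (\ref{eq-heat}) gives $\sup_{t\in I}\|e^{-t\mathcal{L}^{\alpha}}(\varphi)\|_{L^{r}}\lesssim\|\varphi\|_{L^{r}}$. Continuity in $t$, and in particular $e^{-t\mathcal{L}^{\alpha}}(\varphi)\to\varphi$ in $L^{r}$ as $t\to0^{+}$, follows from the strong continuity of the subordinated semigroup on $L^{r}$ for $1<r<\infty$.

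The main obstacle is the $L^{q}(I;L^{p})$ estimate in (i), because the admissibility relation $1/q=\beta^{\star}(1/r-1/p)/2\alpha$ is scaling-critical: Lemma \ref{le-4.1}(i) only supplies the pointwise-in-time bound $\|e^{-t\mathcal{L}^{\alpha}}(\varphi)\|_{L^{p}}\lesssim t^{-1/q}\|\varphi\|_{L^{r}}$, and $t^{-1/q}\notin L^{q}(I)$ at either endpoint, so direct integration in $t$ diverges. To circumvent this I would upgrade to a weak-type bound and interpolate. The displayed pointwise estimate says exactly that $\varphi\mapsto(t\mapsto e^{-t\mathcal{L}^{\alpha}}(\varphi))$ is bounded from $L^{r}(\mathbb{M})$ into $L^{q,\infty}(I;L^{p})$, since $t^{-1/q}\in L^{q,\infty}(I)$. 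Choosing two exponents $r_{0}\neq r_{1}$ near $r$ (with the same $p$ and the associated critical $q_{0}\neq q_{1}$) produces two bounded maps $L^{r_{i}}\to L^{q_{i},\infty}(I;L^{p})$, and real interpolation with parameters $(\theta,q)$, using $(L^{q_{0},\infty},L^{q_{1},\infty})_{\theta,q}=L^{q}$ and $(L^{r_{0}},L^{r_{1}})_{\theta,q}=L^{r,q}$, gives boundedness $L^{r,q}\to L^{q}(I;L^{p})$. The final step is the Lorentz embedding $L^{r}=L^{r,r}\hookrightarrow L^{r,q}$, valid precisely when $q\geq r$; the defining inequality for an admissible triplet is exactly the condition that forces $q>r$, and this is the place where I expect the bookkeeping to require the most care. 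Composing the interpolated bound with this embedding yields $\|e^{-t\mathcal{L}^{\alpha}}(\varphi)\|_{L^{q}(I;L^{p})}\lesssim\|\varphi\|_{L^{r}}$, completing (i).
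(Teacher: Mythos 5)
Your proposal is correct and takes essentially the same route as the paper: part (ii) and the $C_{b}(I;L^{r})$ membership are read off from Lemma \ref{le-4.1} just as in the paper, and for the core estimate (i) both arguments convert the scaling-critical pointwise bound $\|e^{-t\mathcal{L}^{\alpha}}\varphi\|_{L^{p}}\lesssim t^{-1/q}\|\varphi\|_{L^{r}}$ of Lemma \ref{le-4.1} into a weak-type estimate and upgrade it by real interpolation along the line of admissible triplets with fixed $p$. Your two-weak-endpoint Lorentz interpolation combined with the embedding $L^{r,r}\hookrightarrow L^{r,q}$ (valid since $q>r$) is exactly the off-diagonal Marcinkiewicz theorem that the paper invokes with endpoints weak $(r_{1},q_{1})$ and type $(p,\infty)$, and in both arguments it is the admissibility constraint on $p$ that forces $q>r$, which settles the bookkeeping point you flagged.
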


\begin{proof}

(i) We divide the argument into two cases.

{\it Case 1: $p=r$ and $q=\infty$.} By Lemma \ref{le-4.1}, for any $t>0$, we have
\begin{eqnarray*}
\|e^{-t{\mathcal{L}}^{\alpha}}(\varphi)(x)\|_{L^{\infty}(I; L^{r})}&=&\sup_{t>0}\|e^{-t{\mathcal{L}}^{\alpha}}(\varphi)(x)\|_{L^{r}(\mathbb{M})}\\
&\lesssim &\sup_{t>0}t^{-\beta^{\star}(1/r-1/r)/2\alpha}\|\varphi\|_{L^{r}(\mathbb{M})}\\
&\lesssim&\|\varphi\|_{L^{r}(\mathbb{M})}.
\end{eqnarray*}

{\it Case 2: $p\neq r$}. Denote by $F(t)(\varphi):=
\|e^{-t{\mathcal{L}}^{\alpha}}(\varphi)\|_{L^p(\mathbb{M})}$. Since
$(q,p,r)$ is an admissible triplet, a further use of Lemma
\ref{le-4.1} can be deduced that for a positive constant $C$,
$$F(t)(\varphi)=\|e^{-t{\mathcal{L}}^{\alpha}}(\varphi)\|_{L^{p}(\mathbb{M})}\le Ct^{-\beta^{\star}(1/r-1/p)/2\alpha}\|\varphi\|_{L^{r}(\mathbb{M})}\le C t^{-1/q}\|\varphi\|_{L^{r}(\mathbb{M})}$$
and
\begin{eqnarray*}
\Big|\Big\{t:\ |F(t)(\varphi)|>\tau\Big\}\Big|&\lesssim&\Big|\Big\{t:\ Ct^{-1/q}\|\varphi\|_{L^{r}}>\tau\Big\}\Big|\\
&\leq&\Big|\Big\{t:\ t<\Big(\frac{C\|\varphi\|_{L^{r}}}{\tau}\Big)^{q}\Big\}\Big|\\
&\lesssim&\Big(\frac{\|\varphi\|_{L^{r}}}{\tau}\Big)^{q},
\end{eqnarray*}
which means that $F(t)$ is a weak $(r,q)$ type operator. On the other hand, notice that
\begin{eqnarray*}
|e^{-t{\mathcal{L}}^{\alpha}}\varphi(x)|&\leq&\int_{\mathbb{M}}\left|K^{\mathcal{L}}_{\alpha,t}(x,y)\right||\varphi(y)|d\mu(y)\\
&\lesssim& \int_{\mathbb{M}}\frac{
t}{(t^{1/2\alpha}+|x-y|)^{\beta^{\star}+2\alpha}}|\varphi(y)|d\mu(y).
\end{eqnarray*}
We can use Lemma \ref{le-Young-inequality} again to deduce that for $\varphi\in L^{p}(\mathbb{M})$,
$$F(t)(\varphi)=\|e^{-t{\mathcal{L}}^{\alpha}}(\varphi)\|_{L^p(\mathbb{M})}\lesssim t^{\beta^{\star}(1/p-1/p)/2\alpha}\|\varphi\|_{L^{p}(\mathbb{M})}.$$
This implies that $F(t)$ is a $(p,\infty)$ type operator.

For any admissible triplet $(q,p,r)$, we can find another admissible triplet $(q_{1},p_{1},r_{1})$ such that $q_{1}<q<\infty$, $r_{1}<r<p$ and
$$\begin{cases}
&1/q=\theta/q_{1}+(1-\theta)/\infty,\\
&1/r=\theta/r_{1}+(1-\theta)/p.
\end{cases}$$
Marcinkiewicz interpolation theorem implies that $F(t)$ is a strong $(r,q)$ type operator and
$$\|e^{-t{\mathcal{L}}^{\alpha}}(\varphi)\|_{L^{q}(I; L^{p})}\lesssim \|\varphi\|_{L^{r}(\mathbb{M})}.$$

(ii) The argument can be also divided into two
cases.

{\it Case 3: $p=r$ and $q=\infty$}. We have
$$\|e^{-t{\mathcal{L}}^{\alpha}}(\varphi)\|_{L^{\infty}(I; L^{p})}=\sup_{t>0}\|e^{-t{\mathcal{L}}^{\alpha}}(\varphi)\|_{{L^p(\mathbb{M})}}\lesssim \|\varphi\|_{L^{r}(\mathbb{M})}.$$

{\it Case 4: $p\neq r$}. Because $(q,p,r)$ is an admissible triplet, that is,
$1/q=\beta^{\star}(1/r-1/p)/2\alpha$, then taking $q'$ such that
$1/p+1=1/r+1/q'$, we can apply Lemma \ref{le-4.1} to get
\begin{eqnarray*}
\|e^{-t{\mathcal{L}}^{\alpha}}(\varphi)\|_{C_{q}(I; L^{p})}&=&\sup_{t>0}t^{1/q}\|e^{-t{\mathcal{L}}^{\alpha}}(\varphi)\|_{L^{p}(\mathbb{M})}\\
&\lesssim&t^{1/q}t^{-\beta^{\star}(1/r-1/p)/2\alpha}\|\varphi\|_{L^{r}(\mathbb{M})}\\
&\lesssim&\|\varphi\|_{L^{r}(\mathbb{M})}.
\end{eqnarray*}
On the other hand, for $t\in I$, Lemma \ref{le-4.1} implies
$$\|e^{-t{\mathcal{L}}^{\alpha}}(\varphi)\|_{L^{r}(\mathbb{M})}\lesssim \|\varphi\|_{L^{r}(\mathbb{M})},
$$
consequently,
$$e^{-t{\mathcal{L}}^{\alpha}}(\varphi)\in C_{b}(I; L^{r}).
$$
This completes the
proof of Theorem \ref{th-1.6}.
\end{proof}

Now we derive the space-time estimate  of the non-homogeneous part
$G(f)$ of the solution $u$ to the  equation (\ref{eq-4.1}).
\begin{theorem}\label{th-1.7}
Assume that the heat kernel $p_t(\cdot,\cdot)$ satisfies {\bf (A1)} and  the
measure $\mu$ satisfies (\ref{eq1.3}). For $b>0$ and $T>0$, let
$r_{0}=\beta^{\star} b/2\alpha$, $I=[0,T)$. Assume that $r\geq r_{0}>1$ and
that $(q,p,r)$ is an admissible triplet satisfying that $p>b+1$.
\begin{itemize}
\item[\rm(i)] If $f\in L^{q/(b+1)}(I; L^{p/(b+1)})$, then for $p<r(b+1)$,
$$\|G(f)\|_{L^{\infty}(I; L^{r})}\lesssim T^{1-\beta^{\star} b/2r\alpha}\|f\|_{L^{q/(b+1)}(I; L^{p/(b+1)})}.$$
For $p\geq r(b+1)$ and
$\theta=[p-r(b+1)]/(b+1)(p-r)$,
$$\|G(f)\|_{L^{\infty}(I; L^{r})}\lesssim T^{1-\beta^{\star} b/2r\alpha}\||f|^{1/(b+1)}\|^{\theta(b+1)}_{L^{\infty}(I; L^{r})}
\||f|^{1/(b+1)}\|^{(1-\theta)(b+1)}_{L^{q}(I; L^{p})}.$$
\item[\rm(ii)] If $f\in L^{q/(b+1)}(I; L^{p/(b+1)})$, then for $p<r(b+1)$,
$$\|G(f)\|_{L^{q}(I; L^{p})}\lesssim T^{1-\beta^{\star} b/2r\alpha}\|f\|_{L^{q/(b+1)}(I; L^{p/(b+1)})}.$$
For $p\geq r(b+1)$ and $\theta=[p-r(b+1)]/(b+1)(p-r)$,
$$\|G(f)\|_{L^{q}(I; L^{p})}\lesssim T^{1-\beta^{\star} b/2r\alpha}\||f|^{1/(b+1)}\|^{\theta(b+1)}_{L^{\infty}(I; L^{r})}
\||f|^{1/(b+1)}\|^{(1-\theta)(b+1)}_{L^{q}(I; L^{p})}.$$
\end{itemize}
\end{theorem}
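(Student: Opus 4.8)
The plan is to start from the Duhamel representation $G(f)(t,x)=\int_0^t e^{-(t-\tau)\mathcal{L}^\alpha}(f)(x,\tau)\,d\tau$ and to estimate the inner operator $e^{-(t-\tau)\mathcal{L}^\alpha}$ by Lemma \ref{le-4.1}(i), thereby reducing every assertion to a one–dimensional convolution inequality in the time variable. Recall that Lemma \ref{le-4.1}(i) bounds $e^{-s\mathcal{L}^\alpha}\colon L^a(\mathbb{M})\to L^c(\mathbb{M})$ with the factor $s^{-\beta^\star(1/a-1/c)/2\alpha}$, and is available only when $a\le c$. This is precisely what forces the dichotomy $p<r(b+1)$ versus $p\ge r(b+1)$ in the statement, according to whether the source exponent $p/(b+1)$ is admissible as it stands, or must first be lowered to $r$ by a spatial interpolation of $f$.

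For part (i) the target is $L^\infty(I;L^r)$. When $p<r(b+1)$ one has $p/(b+1)<r$, so Lemma \ref{le-4.1}(i) applies directly and gives $\|e^{-(t-\tau)\mathcal{L}^\alpha}f(\tau)\|_{L^r}\lesssim (t-\tau)^{-\gamma}\|f(\tau)\|_{L^{p/(b+1)}}$ with $\gamma=\beta^\star((b+1)/p-1/r)/2\alpha=r_0/p-1/q$. Integrating in $\tau$ and applying H\"older's inequality in time with exponent $q/(b+1)$ produces the factor $t^{\,1-(b+1)/q-\gamma}$, and a short computation using the admissibility relation $1/q=\beta^\star(1/r-1/p)/2\alpha$ together with $r_0=\beta^\star b/2\alpha$ collapses this exponent to $1-\beta^\star b/2r\alpha$; taking the supremum over $t\in[0,T)$ yields the claimed power of $T$. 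When $p\ge r(b+1)$ the source exponent $p/(b+1)$ exceeds $r$, so I would first interpolate $f(\tau)$ between $L^r$ and $L^p$: writing the datum through $|f|^{1/(b+1)}$ and choosing exactly the stated $\theta=[p-r(b+1)]/(b+1)(p-r)$, one checks that the interpolated spatial exponent equals $r$, whence $e^{-(t-\tau)\mathcal{L}^\alpha}$ acts as a bounded operator on $L^r$ and the remaining time integral is again handled by H\"older, splitting the $L^\infty(I;L^r)$– and $L^q(I;L^p)$–factors of $|f|^{1/(b+1)}$ by pulling out the supremum and integrating the $L^p$–piece.

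Part (ii) is entirely parallel, the only change being that the target space is now $L^q(I;L^p)$, so the final time integral must be controlled in the $L^q$–norm rather than pointwise. For $p<r(b+1)$ I would apply Lemma \ref{le-4.1}(i) in the form $L^{p/(b+1)}\to L^p$, which gives the kernel $(t-\tau)^{-r_0/p}$, and then invoke Young's convolution inequality on the finite interval $[0,T)$; computing the $L^\ell(0,T)$–norm of $(\cdot)^{-r_0/p}$ with $1/\ell=1-b/q$ returns once more $T^{1-\beta^\star b/2r\alpha}$. For $p\ge r(b+1)$ I combine the same spatial interpolation down to $L^r$ with the map $L^r\to L^p$, whose kernel is $(t-\tau)^{-1/q}$ by admissibility, and close with Young's inequality applied to the product $|f|^{1/(b+1)}$.

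The routine part is the algebra of exponents. The only genuine points requiring care are the identification of the interpolation parameter $\theta$, namely verifying that it sends the intermediate Lebesgue exponent exactly to $r$ so that the semigroup becomes bounded and no spurious singularity is introduced, and the convergence of the time convolution, which holds precisely because $r\ge r_0$. I expect the endpoint $r=r_0$, where the power of $T$ degenerates to $T^0$ and the singular–kernel cases demand the Hardy–Littlewood–Sobolev inequality in place of Young's, to be the main obstacle to a clean write-up.
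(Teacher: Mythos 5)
Your proposal follows essentially the same route as the paper's proof: Duhamel plus the smoothing estimates of Lemma \ref{le-4.1} ($L^{p/(b+1)}\to L^r$ or $L^{p/(b+1)}\to L^p$ when $p<r(b+1)$, and $L^r$-boundedness respectively $L^r\to L^p$ smoothing after the spatial H\"older interpolation with the same parameter $\theta$ when $p\ge r(b+1)$), closed by H\"older's inequality in time for part (i) and Young's convolution inequality in time for part (ii), with the admissibility relation collapsing the exponent of $T$ to $1-\beta^{\star}b/2r\alpha$. Your closing worry about the endpoint $r=r_0$ (where the time kernel becomes exactly non-integrable after H\"older/Young) is well placed, but note that the paper's own proof silently passes over this same degenerate case.
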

\begin{proof}
We first prove (i).  For the case $p<r(b+1)$, we have
\begin{align*}
&\|G(f)\|_{L^{\infty}(I; L^{r})}\\
&\ \ \leq\sup_{t\in I}\|G(f)(t,\cdot)\|_{L^{r}(\mathbb{M})}\\
&\ \ \leq\sup_{t\in I}\Big\|\int^{t}_{0}e^{-(t-s){\mathcal{L}}^{\alpha}}f(s,x)ds\Big\|_{L^{r}(\mathbb{M})}\\
&\ \ \leq\sup_{t\in I}\int^{t}_{0}\Big\|e^{-(t-s){\mathcal{L}}^{\alpha}}f(s,x)\Big\|_{L^{r}(\mathbb{M})}ds.
\end{align*}
Take $q'$ such that $(b+1)/p+1/q'=1/r+1$. By Lemma \ref{le-4.1}, we get
\begin{align*}
&\|G(f)\|_{L^{\infty}(I; L^{r})}\\
&\ \ \leq\sup_{t\in I}\int^{t}_{0}(t-s)^{-\beta^{\star}(1-1/q')/2\alpha}\|f(s,\cdot)\|_{L^{p/(b+1)}(\mathbb{M})}ds\\
&\ \ \leq\sup_{t\in
I}\int^{t}_{0}(t-s)^{-\beta^{\star}[(b+1)/p-1/r]/2\alpha}\|f(s,\cdot)\|_{L^{p/(b+1)}(\mathbb{M})}ds.
\end{align*}
Let $\chi$ be an index such that $(b+1)/q+1/\chi=1$. We can use H\"older's inequality to deduce that
\begin{align*}
&\|G(f)\|_{L^{\infty}(I; L^{r})}\\
&\ \leq\sup_{t\in
I}\Big(\int^{t}_{0}(t-s)^{\beta^{\star}[(b+1)/p-1/r]\chi/2\alpha}ds\Big)^{1/\chi}
\Big(\int^{t}_{0}\|f(s,\cdot)\|^{q/(b+1)}_{L^{p/(b+1)}(\mathbb{M})}ds\Big)^{(b+1)/q}\\
&\ \lesssim T^{1/\chi-\beta^{\star}[(b+1)/p-1/r]/2\alpha}\Big(\int^{T}_{0}\|f(s,\cdot)\|^{q/(b+1)}_{L^{p/(b+1)}(\mathbb{M})}ds\Big)^{(b+1)/q}\\
&\ \lesssim T^{1-\beta^{\star} b/2\alpha r}\|f\|_{L^{q/(b+1)}(I; L^{p/(b+1)})}.
\end{align*}

For the case of $p\geq r(b+1)$, applying Lemma \ref{le-4.1} again,
we can get
\begin{align*}
&\|G(f)\|_{L^{\infty}(I; L^{r})}\\
&\ \leq\sup_{t\in I}\int^{t}_{0}\|e^{-(t-\tau){\mathcal{L}}^{\alpha}}f(s,\cdot)\|_{L^{r}(\mathbb{M})}ds\\
&\ \leq\sup_{t\in I}\int^{t}_{0}\|f(s,\cdot)\|_{L^{r}(\mathbb{M})}ds\\
&\ \leq\sup_{t\in I}\int^{t}_{0}\Big[\int_{M}|f(s,x)|^{r(b+1)/(b+1)}d\mu(x)\Big]^{(b+1)/r(b+1)}ds\\
&\ =\sup_{t\in I}\int^{t}_{0}\||f(s,\cdot)|^{1/(b+1)}\|^{b+1}_{L^{r(b+1)}(\mathbb{M})}ds.
\end{align*}
Let
$$\begin{cases}
&\theta\in (0,1);\\
&r(b+1)\theta p'=r;\\
&r(b+1)(1-\theta)q'=p;\\
&1/p'+1/q'=1.
\end{cases}$$
Then
$$\theta=[p-r(b+1)]/(p-r).
$$
The H\"older
inequality can be used again to obtain
\begin{align*}
&\|G(f)\|_{L^{\infty}(I; L^{r})}\\
&\ \leq\sup_{t\in I}\int^{t}_{0}\||f(s,\cdot)|^{1/(b+1)}\|^{b+1}_{L^{r(b+1)}(\mathbb{M})}ds\\
&\ \leq\sup_{t\in I}\int^{t}_{0}\Big(\int_{\mathbb{M}}|f(s,x)|^{r\theta(b+1) p'/(b+1)}d\mu(x)\Big)^{1/rp'}\\
&\ \ \ \times\Big(\int_{\mathbb{M}}|f(s,x)|^{r(1-\theta)(b+1)q'/(b+1)}d\mu(x)\Big)^{1/rq'}ds\\
&\ \leq\sup_{t\in I}\int^{t}_{0}\Big(\int_{\mathbb{M}}|f(s,x)|^{r/(b+1)}dx\Big)^{\theta(b+1)/r}\\
&\ \ \ \times\Big(\int_{\mathbb{M}}|f(s,x)|^{p/(b+1)}d\mu(x)\Big)^{(b+1)(1-\theta)/p}ds\\
&\ \leq\sup_{t\in I}\int^{t}_{0}\||f(s,\cdot)|^{1/(b+1)}\|^{\theta(b+1)}_{L^{r}(\mathbb{M})}\||f(s,\cdot)|^{1/(b+1)}\|^{(1-\theta)(b+1)}_{L^{p}(\mathbb{M})}ds\\
&\ \leq\||f|^{1/(b+1)}\|^{(b+1)\theta}_{L^{\infty}(I; L^{r})}\sup_{t\in I}\int^{t}_{0}\||f(s,\cdot)|^{1/(b+1)}\|^{(1-\theta)(b+1)}_{L^{p}(\mathbb{M})}ds.
\end{align*}
Applying the H\"older inequality on $s$, we obtain
\begin{align*}
&\|G(f)\|_{L^{\infty}(I; L^{r})}\\
&\ \leq\||f|^{1/(b+1)}\|^{(b+1)\theta}_{L^{\infty}(I; L^{r})}\sup_{t\in I}\Big(\int^{t}_{0}ds\Big)^{1-(b+1)(1-\theta)/q}\\
&\ \ \ \times\Big(\int^{t}_{0}\||f(s,\cdot)|^{1/(b+1)}\|^{(1-\theta)(b+1)q/(b+1)(1-\theta)}_{L^{p}}ds\Big)^{(b+1)(1-\theta)/q}\\
&\ \lesssim T^{1-(b+1)(1-\theta)/q}\||f|^{1/(b+1)}\|^{(b+1)\theta}_{L^{\infty}(I; L^{r})}\||f|^{1/(b+1)}\|^{(b+1)(1-\theta)}_{L^{q}(I; L^{p})}\\
&\ \lesssim T^{1-b\beta^{\star}/2\alpha
r}\||f|^{1/(b+1)}\|^{(b+1)\theta}_{L^{\infty}(I;
L^{r})}\||f|^{1/(b+1)}\|^{(b+1)(1-\theta)}_{L^{q}(I; L^{p})},
\end{align*}
where  we have used the fact that  $(q,p,r)$ is an admissible
triplet in the last inequality.

Next we prove (ii). For the case $p<r(b+1)$, using Lemma
\ref{le-4.1}, we obtain
\begin{align*}
&\|G(f)\|_{L^{q}(I; L^{p})}\\
&\ =\Big[\int_{0}^{T}\Big\|\int^{t}_{0}e^{-(t-s){\mathcal{L}}^{\alpha}}f(s,x)ds\Big\|^{q}_{L^p(\mathbb{M})}dt\Big]^{1/q}\\
&\ \leq\Big[\int_{0}^{T}\Big(\int^{t}_{0}\Big\|e^{-(t-s){\mathcal{L}}^{\alpha}}f(s,x)\Big\|_{L^p(\mathbb{M})}ds\Big)^{q}dt\Big]^{1/q}\\
&\ \leq\Big[\int_{0}^{T}\Big(\int^{t}_{0}(t-s)^{-\beta^{\star}[(1+b)/p-1/p]/2\alpha}\|f(s,\cdot)\|_{L^{p/(b+1)}(\mathbb{M})}ds\Big)^{q}dt\Big]^{1/q}.
\end{align*}
Choosing $\chi$ such that $1+1/q=(1+b)/q+1/\chi$, we apply Young's
inequality on the variable $t$ to obtain
\begin{align*}
&\|G(f)\|_{L^{q}(I; L^{p})}\\
&\ \leq\Big\|\int^{t}_{0}(t-s)^{-n[(1+b)/p-1/p]/2\alpha}\|f(s,\cdot)\|_{L^{p/(b+1)}(\mathbb{M})}ds\Big\|_{L^{q}(dt)}\\
&\ \leq\|f\|_{L^{q/(b+1)}(I; L^{p/(b+1)})}\Big(\int^{T}_{0}t^{-\beta^{\star}[(b+1)/p-1/p]\chi/2\alpha}dt\Big)^{1/\chi}\\
&\ \lesssim T^{1-b\beta^{\star}/2\alpha r}\|f\|_{L^{q/(b+1)}(I; L^{p/(b+1)})}.
\end{align*}

For the case $p\geq  r(b+1)$, we apply Lemma \ref{le-4.1} again to get
\begin{align*}
&\|G(f)\|_{L^{q}(I; L^{p})}\\
&\ \leq\Big\|\int^{t}_{0}\Big\|e^{-(t-s){\mathcal{L}}^{\alpha}}f(s,\cdot)\Big\|_{L^p(\mathbb{M})}ds\Big\|_{L^{q}(dt)}\\
&\ \leq\Big\|\int^{t}_{0}(t-s)^{-\beta^{\star}(1/r-1/p)/2\alpha}\||f(s,\cdot)|^{1/(b+1)}\|_{L^{r(b+1)}(\mathbb{M})}^{b+1}ds\Big\|_{L^{q}(dt)}.
\end{align*}
Take  a constant $\theta\in (0,1)$ such that
$$\begin{cases}
&r(b+1)\theta p'=r,\\
&r(b+1)(1-\theta)q'=p,\\
&1/p'+1/q'=1.
\end{cases}$$
Then $$\theta=[p-r(b+1)]/(p-r).$$ Using H\"older's inequality on the spatial variable, we obtain
\begin{eqnarray*}
&&\int^{t}_{0}(t-s)^{-\beta^{\star}(1/r-1/p)/2\alpha}\left\||f(s,\cdot)|^{1/(b+1)}\right\|_{L^{r(b+1)}(\mathbb{M})}^{b+1}ds\\
&&\leq\int^{t}_{0}(t-s)^{-\beta^{\star}(1/r-1/p)/2\alpha}\Big(\int_{\mathbb{M}}|f(s,x)|^{r(b+1)\theta p'/(b+1)}dx\Big)^{1/rp'}\\
&&\quad\times\Big(\int_{\mathbb{M}}|f(s,x)|^{r(b+1)(1-\theta)q'/(1+b)}dx\Big)^{1/rq'}ds\\
&&=\int^{t}_{0}\frac{\left\||f(s,\cdot)|^{1/(b+1)}\right\|_{L^{r}(\mathbb{M})}^{\theta(b+1)}
\left\||f(s,\cdot)|^{1/(b+1)}\right\|_{L^{p}(\mathbb{M})}^{(1-\theta)(b+1)}}{(t-s)^{\beta^{\star}(1/r-1/p)/2\alpha}} ds\\
&&\leq\int^{t}_{0}\frac{\left\||f|^{1/(b+1)}\right\|_{L^{\infty}(I;
L^{r})}^{\theta(b+1)}
\left\||f(s,\cdot)|^{1/(b+1)}\right\|_{L^{p}(\mathbb{M})}^{(1-\theta)(b+1)}}{(t-s)^{\beta^{\star}(1/r-1/p)/2\alpha}} ds.
\end{eqnarray*}
The above estimation indicates
\begin{align*}
&\|G(f)\|_{L^{q}(I; L^{p})}\\
&\ \leq \left\||f|^{1/(b+1)}\right\|_{L^{\infty}(I; L^{r})}^{\theta(b+1)}\\
&\ \ \ \times\left\|\int^{t}_{0}(t-s)^{-\beta^{\star}(1/r-1/p)/2\alpha}\left\||f(s,\cdot)|^{1/(b+1)}\right\|_{L^{p}(\mathbb{M})}^{(1-\theta)(b+1)}ds\right\|_{L^{q}(dt)}.
\end{align*}
Suppose that $\chi$ obeys $$1+1/q=(1+b)(1-\theta)/q+1/\chi.
$$
Notice that
$(q,p,r)$ is an admissible triplet, i.e.,
$$1/q=\beta^{\star}(1/r-1/p)/2\alpha.$$  So, we use Young's inequality
for the variable $t$ to get
\begin{align*}
&\|G(f)\|_{L^{q}(I; L^{p})}\\
&\ \leq\left\||f|^{1/(b+1)}\right\|_{L^{\infty}(I; L^{r})}^{\theta(b+1)}\left\||f|^{1/(b+1)}\right\|_{L^{q}(I; L^{p})}^{(b+1)(1-\theta)}\Big(\int^{T}_{0}t^{-n(1/r-1/p)\chi/2\alpha}dt\Big)^{1/\chi}\\
&\ \ \lesssim T^{1-nb/2r\alpha}\left\||f|^{1/(b+1)}\right\|_{L^{\infty}(I; L^{r})}^{\theta(b+1)}\left\||f|^{1/(b+1)}\right\|_{L^{q}(I; L^{p})}^{(b+1)(1-\theta)}.
\end{align*}
This completes the proof of Theorem \ref{th-1.7}.
\end{proof}

We can also establish the following assertion.

\begin{theorem}\label{th-4.3}
Assume that the heat kernel $p_t(\cdot,\cdot)$ satisfies {\bf (A1)}
and  the measure $\mu$ satisfies (\ref{eq1.3}). For $b>0$ and $T>0$,
let $r_{0}=b\beta^{\star}/2\alpha$, $I=[0,T)$. Assume that $r\geq
r_{0}>1$ and that $(q,p,r)$ is a generalized admissible triplet
satisfying  $p>b+1$.
\begin{itemize}
\item[\rm(i)] If $f\in C_{q/(b+1)}(I; L^{p/(b+1)})$, then for $p<r(b+1)$,
$$\|G(f)\|_{L^{\infty}(I; L^{r})}\leq T^{1-b\beta^{\star}/2r\alpha}\|f\|_{C_{q/(b+1)}(I; L^{p/(b+1)})}.$$
For $p\geq r(b+1)$ and
$\theta=[p-r(b+1)]/(b+1)(p-r)$,
$$\|G(f)\|_{L^{\infty}(I; L^{r})}\leq CT^{1-b\beta^{\star}/2r\alpha}\||f|^{1/(b+1)}\|^{\theta(b+1)}_{L^{\infty}(I; L^{r})}
\||f|^{1/(b+1)}\|^{(1-\theta)(b+1)}_{C_{q}(I; L^{p})}.$$
\item[\rm(ii)] If $f\in C_{q/(b+1)}(I; L^{p/(b+1)})$, then for $p<r(b+1)$,
$$\|G(f)\|_{C_{q}(I; L^{p})}\leq T^{1-b\beta^{\star}/2r\alpha}\|f\|_{C_{q/(b+1)}(I; L^{p/(b+1)})}.$$
For $p\geq r(b+1)$ and $\theta=[p-r(b+1)]/(b+1)(p-r)$,
$$\|G(f)\|_{C_{q}(I; L^{p})}\lesssim T^{1-b\beta^{\star}/2r\alpha}\||f|^{1/(b+1)}\|^{\theta(b+1)}_{L^{\infty}(I; L^{r})}
\||f|^{1/(b+1)}\|^{(1-\theta)(b+1)}_{C_{q}(I; L^{p})}.$$
\end{itemize}
\end{theorem}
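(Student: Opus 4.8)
The plan is to run the argument of Theorem~\ref{th-1.7} essentially line by line, the single structural difference being that the time integration---handled there by Young's and H\"older's inequalities in the variable $t$---is now carried out using the time weight built into the spaces $C_\sigma(I;\mathbb{X})$. In every case I start from the Duhamel expression $G(f)(t,x)=\int_0^t e^{-(t-s)\mathcal{L}^\alpha}f(s,x)\,ds$, apply Minkowski's integral inequality to move the spatial norm inside the $s$-integral, and then estimate the free-evolution factor $\|e^{-(t-s)\mathcal{L}^\alpha}f(s,\cdot)\|_{L^\rho}$ by Lemma~\ref{le-4.1}(i). The only new input is that, since $f\in C_{q/(b+1)}(I;L^{p/(b+1)})$, the definition of the norm furnishes the pointwise-in-time bound $\|f(s,\cdot)\|_{L^{p/(b+1)}}\le s^{-(b+1)/q}\|f\|_{C_{q/(b+1)}(I;L^{p/(b+1)})}$, which replaces the $L^{q/(b+1)}(ds)$-integrability used before.

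For the two cases $p<r(b+1)$ I would proceed as follows. After Lemma~\ref{le-4.1}(i) the estimate takes the convolution form $\int_0^t(t-s)^{-a}\|f(s,\cdot)\|_{L^{p/(b+1)}}\,ds$, where $a$ is the decay exponent produced by the lemma ($a=\beta^{\star}[(b+1)/p-1/r]/2\alpha$ for part (i), mapping $L^{p/(b+1)}\to L^r$, and $a=\beta^{\star} b/2\alpha p$ for part (ii), mapping $L^{p/(b+1)}\to L^p$). Inserting the time-weight bound reduces the whole expression to the scalar Beta-type integral
$$\int_0^t(t-s)^{-a}s^{-c}\,ds=t^{\,1-a-c}B(1-a,1-c),\qquad c=\frac{b+1}{q}.$$
Using the admissibility identity $1/q=\beta^{\star}(1/r-1/p)/2\alpha$, a short computation shows that the net power of $t$ is $1-b\beta^{\star}/2\alpha r$ in both parts: directly $1-a-c$ for part (i), whose target $L^\infty(I;L^r)$ carries no time weight, and $1/q+1-a-c$ for part (ii), whose target $C_q(I;L^p)$ supplies the extra factor $t^{1/q}$. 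Taking the supremum over $t\in[0,T)$ then yields the claimed factor $T^{1-b\beta^{\star}/2r\alpha}$.

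For the two cases $p\ge r(b+1)$ I would first apply Lemma~\ref{le-4.1}(i) with the pair $L^r\to L^r$ in part (i) (no time decay) and $L^r\to L^p$ in part (ii) (decay $(t-s)^{-1/q}$), and then interpolate the spatial norm by H\"older's inequality. Writing $g=|f|^{1/(b+1)}$ and using $r\le r(b+1)\le p$, one has $\|g(s,\cdot)\|_{L^{r(b+1)}}\le\|g(s,\cdot)\|_{L^r}^{\theta}\|g(s,\cdot)\|_{L^p}^{1-\theta}$ with $1/(r(b+1))=\theta/r+(1-\theta)/p$, which solves to $\theta=[p-r(b+1)]/[(b+1)(p-r)]$---exactly the exponent in the statement. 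I would then factor out $\||f|^{1/(b+1)}\|_{L^\infty(I;L^r)}^{\theta(b+1)}$, bound the remaining spatial factor by $\||f(s,\cdot)|^{1/(b+1)}\|_{L^p}\le s^{-1/q}\||f|^{1/(b+1)}\|_{C_q(I;L^p)}$, and evaluate the resulting one-dimensional integral in $s$; the identity $(1-\theta)(b+1)/q=b\beta^{\star}/2\alpha r$ again makes the $T$-power come out as $T^{1-b\beta^{\star}/2r\alpha}$.

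The main obstacle is not the bookkeeping of powers but the convergence of these time integrals: the Beta function $B(1-a,1-c)$ is finite only when $a<1$ and $c<1$ \emph{separately}, and the bare integral $\int_0^t s^{-c}\,ds$ likewise requires $c<1$. I would therefore check, case by case, that the relevant exponents stay strictly below $1$, and the decisive point is that it is precisely the \emph{generalized} admissible range of $p$ (in contrast to the plain admissible range) that forces $q>1$, hence $1/q<1$, so that the singular kernel $(t-s)^{-1/q}$ appearing in part (ii) is locally integrable. The standing hypotheses $r\ge r_0>1$ and $p>b+1$ are what one must use to control the exponents $c=(b+1)/q$ and $a=\beta^{\star} b/2\alpha p$, and the borderline regime $r=r_0$, where $1-b\beta^{\star}/2\alpha r=0$ and the integral is only marginally convergent, would require separate attention.
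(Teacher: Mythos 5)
Your proposal follows essentially the same route as the paper's proof: Duhamel plus Lemma \ref{le-4.1}, the pointwise-in-time bound $\|f(s,\cdot)\|_{L^{p/(b+1)}}\lesssim s^{-(b+1)/q}\|f\|_{C_{q/(b+1)}(I;L^{p/(b+1)})}$, a Beta-type integral in $s$ for $p<r(b+1)$, and H\"older interpolation with the exponent $\theta$ together with the admissibility identity for $p\ge r(b+1)$. Two small points in your favor: your normalization $\theta=[p-r(b+1)]/[(b+1)(p-r)]$ is the one consistent with the statement (the paper's proof internally uses $\theta=[p-r(b+1)]/(p-r)$), and your explicit check of the convergence of the singular time integrals, including the borderline $r=r_0$, addresses a point the paper passes over silently.
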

\begin{proof}
(i) For the case $p<r(b+1)$, we use Lemma \ref{le-4.1} to derive
$$\|e^{-(t-s){\mathcal{L}}^{\alpha}}f(s,\cdot)\|_{L^{r}(\mathbb{M})}\lesssim (t-s)^{-\beta^{\star}[(b+1)/p-1/r]/2\alpha}\|f(s,\cdot)\|_{L^{p/(b+1)}(\mathbb{M})},$$
thereby getting
\begin{align*}
&\|G(f)\|_{L^{\infty}(I; L^{r})}\\
&\ =\sup_{t\in[0,T)}\|G(f)\|_{L^{r}(\mathbb{M})}\\
&\ \leq\sup_{t\in[0,T)}\int^{t}_{0}\|e^{-(t-s){\mathcal{L}}^{\alpha}}f(s,\cdot)\|_{L^{r}(\mathbb{M})}ds\\
&\ \lesssim \sup_{t\in[0,T)}\int^{t}_{0}(t-s)^{-\beta^{\star}[(b+1)/p-1/r]/2\alpha}\|f(s,\cdot)\|_{L^{p/(b+1)}(\mathbb{M})}ds\\
&\ \lesssim \|f\|_{C_{q/(b+1)}(I; L^{p/(b+1)})}\sup_{t\in[0,T)}\int^{t}_{0}(t-s)^{-\beta^{\star}[(b+1)/p-1/r]/2\alpha}s^{-(b+1)/q}ds\\
&\ \lesssim \|f\|_{C_{q/(b+1)}(I; L^{p/(b+1)})}\sup_{t\in[0,T)}t^{1-b\beta^{\star}/2\alpha r}\int^{1}_{0}\frac{u^{-(b+1)/q}}{(1-u)^{\beta^{\star}[(b+1)/p-1/r]}}du\\
&\ \lesssim T^{1-b\beta^{\star}/2\alpha r}\|f\|_{C_{q/(b+1)}(I; L^{p/(b+1)})}.
\end{align*}

Next  we consider the case $p\geq r(b+1)$. Lemma \ref{le-4.1}
implies
$$\|e^{-(t-s){\mathcal{L}}^{\alpha}}f(s,\cdot)\|_{L^{r}(\mathbb{M})}\leq C\|f(s,\cdot)\|_{L^{r}(\mathbb{M})}.$$
If
$$\begin{cases}
&\theta\in (0,1);\\
&r(b+1)\theta p'=r;\\
&r(b+1)(1-\theta)q'=p;\\
&1/p'+1/q'=1,
\end{cases}$$
then $$\theta=[p-r(b+1)]/(p-r),
$$
and hence H\"older's inequality is used to derive
\begin{align*}
&\|G(f)\|_{L^{\infty}(I; L^{r})}\\
&\ \leq\sup_{t\in [0,T)}\int^{t}_{0}\|e^{-(t-s){\mathcal{L}}^{\alpha}}f(s,\cdot)\|_{L^{r}(\mathbb{M})}ds\\
&\ \lesssim \sup_{t\in [0,T)}\int^{t}_{0}\|f(s,\cdot)\|_{L^{r}(\mathbb{M})}ds\\
&\ =\sup_{t\in [0,T)}\int^{t}_{0}\left\||f(s,\cdot)|^{1/(b+1)}\right\|_{L^{r(b+1)}(\mathbb{M})}ds\\
&\ =\sup_{t\in [0,T)}\int^{t}_{0}\left\||f(s,\cdot)|^{1/(b+1)}\right\|^{\theta(b+1)}_{L^{r}(\mathbb{M})}\left\||f(s,\cdot)|^{1/(b+1)}\right\|^{(b+1)(1-\theta)}_{L^{p}(\mathbb{M})}ds\\
&\ \lesssim \left\||f|^{1/(b+1)}\right\|^{\theta(b+1)}_{L^{\infty}(I; L^{r})}\left\||f(s,\cdot)|^{1/(b+1)}\right\|^{(b+1)(1-\theta)}_{C_{q}(I;L^{p})}\\
&\ \ \ \times \Big(\sup_{t\in [0,T)}\int^{t}_{0}s^{-(b+1)(1-\theta)/q}ds\Big)\\
&\lesssim T^{1-\beta^{\star} b/2\alpha
r}\left\||f|^{1/(b+1)}\right\|^{\theta(b+1)}_{L^{\infty}(I;
L^{r})}\left\||f(s,\cdot)|^{1/(b+1)}\right\|^{(b+1)(1-\theta)}_{C_{q}(I;L^{p})}.
\end{align*}

Finally,  we begin to prove (ii). Suppose $$f\in C_{q/(b+1)}(I;
L^{p/(b+1)}).
$$
 For the case $p<r(b+1)$, we use Lemma \ref{le-4.1} to obtain
$$\left\|e^{-(t-s){\mathcal{L}}^{\alpha}}f(s,\cdot)\right\|_{L^{p}(\mathbb{M})}\leq C(t-s)^{-\beta^{\star}[(b+1)/p-1/p]/2\alpha}\left\|f(s,\cdot)\right\|_{L^{p/(b+1)}(\mathbb{M})}.$$
Then a direct computation gives
\begin{align*}
&\|G(f)\|_{C_{q}(I; L^{p})}\\
&\ :=\sup_{t\in [0,T)}t^{1/q}\|G(f)\|_{L^{p}(\mathbb{M})}\\
&\ \leq\sup_{t\in [0,T)}t^{1/q}\left\|\int^{t}_{0}e^{-(t-s){\mathcal{L}}^{\alpha}}f(s,\cdot)ds\right\|_{L^{p}(\mathbb{M})}\\
&\ \leq\sup_{t\in [0,T)}t^{1/q}\int^{t}_{0}\left\|e^{-(t-s){\mathcal{L}}^{\alpha}}f(s,\cdot)\right\|_{L^{p}(\mathbb{M})}ds\\
&\ \leq\sup_{t\in [0,T)}t^{1/q}\int^{t}_{0}(t-s)^{-\beta^{\star}[(b+1)/p-1/p]/2\alpha}\left\|f(s,\cdot)\right\|_{L^{p/(b+1)}(\mathbb{M})}ds\\
&\ \leq\left\|f\right\|_{C_{q/(b+1)}(I;L^{p/(b+1)})}\sup_{t\in [0,T)}t^{1/q}\int^{t}_{0}(t-s)^{-\beta^{\star}[(b+1)/p-1/p]/2\alpha}s^{-(b+1)/q}ds\\
&\ \lesssim T^{1-\beta^{\star} b/2\alpha
r}\left\|f\right\|_{C_{q/(b+1)}(I;L^{p/(b+1)})}.
\end{align*}

For the case $p\geq r(b+1)$, Lemma \ref{le-4.1} implies
\begin{align*}
&\left\|e^{-(t-s){\mathcal{L}}^{\alpha}}f(s,\cdot)\right\|_{L^{p}(\mathbb{M})}\\
&\ \lesssim (t-s)^{-\beta^{\star}(1/r-1/p)/2\alpha}\left\|f(s,\cdot)\right\|_{L^{r}(\mathbb{M})}\\
&\ =(t-s)^{-\beta^{\star}(1/r-1/p)/2\alpha}\left\||f(s,\cdot)|^{1/(b+1)}\right\|_{L^{r(b+1)}(\mathbb{M})}^{b+1}.
\end{align*}
Similar to (i), taking $\theta=[p-r(b+1)]/(p-r)$,   H\"older's
inequality can be applied to get
\begin{align*}
&\|G(f)\|_{C_{q}(I; L^{p})}\\
&\ \leq\sup_{t\in[0, T)}t^{1/q}\int^{t}_{0}(t-s)^{-\beta^{\star}(1/r-1/p)/2\alpha}\left\||f(s,\cdot)|^{1/(b+1)}\right\|_{L^{r(b+1)}(\mathbb{M})}^{b+1}ds\\
&\ \leq\sup_{t\in[0, T)}t^{1/q}\int^{t}_{0}(t-s)^{-\beta^{\star}(1/r-1/p)/2\alpha}\left\||f(s,\cdot)|^{1/(b+1)}\right\|_{L^{r}(\mathbb{M})}^{(b+1)\theta}\\
&\ \ \times\left\||f(s,\cdot)|^{1/(b+1)}\right\|_{L^{p}(\mathbb{M})}^{(b+1)(1-\theta)}ds\\
&\ \leq\left\||f|^{1/(b+1)}\right\|_{L^{\infty}(I; L^{r})}^{(b+1)\theta}\left\||f|^{1/(b+1)}\right\|_{C_{q}(I; L^{p})}^{(b+1)(1-\theta)}\\
&\ \ \times\Big(\sup_{t\in[0, T)}t^{1/q}\int^{t}_{0}(t-s)^{-\beta^{\star}(1/r-1/p)/2\alpha}s^{-(b+1)(1-\theta)/q}ds\Big)\\
&\ \lesssim T^{1-\beta^{\star}
b/2r\alpha}\left\||f|^{1/(b+1)}\right\|_{L^{\infty}(I;
L^{r})}^{(b+1)\theta}\left\||f|^{1/(b+1)}\right\|_{C_{q}(I;
L^{p})}^{(b+1)(1-\theta)}.
\end{align*}
This completes the proof of Theorem \ref{th-4.3}.

\end{proof}

\subsection{Estimation - part B}\label{sec-4}
 This part is designed to give the Strichartz type estimates for solutions to (\ref{eq-4.1}) whose situation $\mathbb{M}=\mathbb{R}^n$ was investigated in Jiang-Xiao-Yang-Zhai's paper  \cite{jiangrenjin2}.

For $(t_0,x_0,r)\in    (0,\infty)\times \mathbb{M}\times(0,\infty)$,
the parabolic ball is defined as
$$B^{(\alpha)}_{r}(t_{0},x_{0}):=\Big\{(t,x)\in  \mathbb{M}_+:\
r^{2\alpha}<t-t_{0}<2r^{2\alpha}\ \&\ d(x,x_0)<r\Big\}
$$
and its volume is denoted by
$\widetilde{\mu}(B_{r}^{(\alpha)}{(t_0,x_0)})= r^{2\alpha}
{\mu}(B_{r}^{(\alpha)}{(t_0,x_0)})$, where $\widetilde{\mu}$ is the
product measure of $\mu$ and the Lebesgue measure on $(0,\infty)$.

As in \cite{Zhai}, we immediately have
\begin{theorem}\label{t11}
Suppose that the heat kernel $p_t(\cdot,\cdot)$ satisfies {\bf (A1)} and  the
measure $\mu$ satisfies (\ref{eq1.3}).
   If
$$
\begin{cases}
1\le p<\tilde{p}\le\infty;\\
1<q<\tilde{q}<\infty;\\
\big({1}/{q}-{1}/{\tilde{q}}\big)+{\beta^{\star}}\big({1}/{p}-{1}/{\tilde{p}}\big)/{2\alpha}=1,
\end{cases}
$$
then
$$
\|G(F)\|_{L^{\tilde{q}}((0,\infty);L^{\tilde{p}})}\lesssim\|F\|_{L^{q}((0,\infty);L^{p})}.
$$
\end{theorem}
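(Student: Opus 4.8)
The plan is to decouple the spatial and temporal variables and thereby reduce the claim to a one-dimensional fractional integration (Hardy--Littlewood--Sobolev) inequality in the time variable. Set $\sigma:=\beta^{\star}(1/p-1/\tilde{p})/2\alpha$. The admissibility relation $(1/q-1/\tilde{q})+\beta^{\star}(1/p-1/\tilde{p})/2\alpha=1$ then reads $1/q-1/\tilde{q}=1-\sigma$; moreover $p<\tilde{p}$ forces $\sigma>0$ while $q<\tilde{q}$ forces $1/q-1/\tilde{q}>0$, so that $0<\sigma<1$, which is exactly the range in which the Riesz potential of order $1-\sigma$ is bounded.

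First I would control the spatial norm time-slice by time-slice. Applying Lemma \ref{le-4.1}(i) (which rests on Proposition \ref{prop-1}) with source exponent $p$ and target exponent $\tilde{p}$, which is legitimate since $p\le\tilde{p}$, gives for $0<s<t$
$$
\|e^{-(t-s)\mathcal{L}^{\alpha}}F(s,\cdot)\|_{L^{\tilde{p}}(\mathbb{M})}\lesssim (t-s)^{-\sigma}\|F(s,\cdot)\|_{L^{p}(\mathbb{M})}.
$$
Writing $G(F)(t,\cdot)=\int_0^t e^{-(t-s)\mathcal{L}^{\alpha}}F(s,\cdot)\,ds$ and using Minkowski's integral inequality to pull the $L^{\tilde{p}}$-norm inside the $s$-integral, I then obtain
$$
\|G(F)(t,\cdot)\|_{L^{\tilde{p}}(\mathbb{M})}\lesssim \int_0^t (t-s)^{-\sigma}\,g(s)\,ds,\qquad g(s):=\|F(s,\cdot)\|_{L^{p}(\mathbb{M})}.
$$

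It remains to take the $L^{\tilde{q}}((0,\infty))$-norm in $t$. Extending $g$ by zero to $\mathbb{R}$ and dominating the causal kernel via $(t-s)^{-\sigma}\mathbf{1}_{\{s<t\}}\le|t-s|^{-\sigma}$, the right-hand side above is controlled by the one-dimensional Riesz potential $I_{1-\sigma}g(t)=\int_{\mathbb{R}}|t-s|^{-\sigma}g(s)\,ds$. The Hardy--Littlewood--Sobolev inequality on $\mathbb{R}$ yields $\|I_{1-\sigma}g\|_{L^{\tilde{q}}}\lesssim\|g\|_{L^{q}}$ precisely under $1<q<\tilde{q}<\infty$ together with the scaling relation $1/q-1/\tilde{q}=1-\sigma$, which are exactly our hypotheses. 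Since $\|g\|_{L^{q}((0,\infty))}=\|F\|_{L^{q}((0,\infty);L^{p})}$, chaining the three displays delivers the asserted estimate.

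The individual ingredients (the $L^{p}\!\to\!L^{\tilde p}$ decay, Minkowski, and the recognition of the fractional integral) are routine; the only delicate point is the bookkeeping of exponents, namely verifying that the admissibility condition coincides exactly with the HLS scaling relation and that $0<\sigma<1$, so that one lands on a genuine fractional integral rather than a borderline or non-integrable kernel. I expect this exponent-matching, together with accommodating the endpoint $\tilde{p}=\infty$ (where Lemma \ref{le-4.1}(i) still applies), to be the main—though ultimately minor—obstacle.
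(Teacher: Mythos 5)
Your proposal is correct and takes essentially the same route as the paper: both estimates proceed by applying Lemma \ref{le-4.1}(i) to each time slice to get the factor $(t-s)^{-\beta^{\star}(1/p-1/\tilde p)/2\alpha}$, pulling the $L^{\tilde p}(\mathbb{M})$-norm inside the Duhamel integral, and then invoking the one-dimensional Hardy--Littlewood--Sobolev (fractional integration) inequality in $t$, whose scaling relation is exactly the admissibility condition. Your explicit bookkeeping that $0<\sigma<1$ and that the hypotheses match the HLS range is a detail the paper leaves implicit, but it is the same argument.
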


\begin{proof}
 Assume that $(q, p, 2)$ and $(q_1, p_1, 2)$
satisfy $1\le p<\tilde{p}\le\infty$, $1<q<\tilde{q}<\infty$ and
$\big({1}/{q}-{1}/{\tilde{q}}\big)+{\beta^{\star}}\big({1}/{p}-{1}/{\tilde{p}}\big)/{2\alpha}=1$.
Via Lemma \ref{le-4.1}, we have, for any $s<t,$
\begin{equation*}\| e^{-(t-s)\mathcal{L}^\alpha}F(s,x)\|_{L^{\tilde{p}}(\mathbb{M})}\lesssim |t-s|^{-{\beta^{\star}}({1}/{p}-{1}/{\tilde{p}})/{2\alpha}}\|F(s,x)\|_{L^{{p}}(\mathbb{M})}.
\end{equation*}
Then the Hardy-Littlewood-Sobolev inequality implies
\begin{eqnarray*}
&&\Big\|\int^t_0 e^{-(t-s)\mathcal{L}^\alpha}F(s,x)ds\Big\|_{L^{\tilde{q}}((0,\infty);L^{\tilde{p}})}\\
&&\lesssim \Big\| \int^t_0 \| e^{-(t-s)\mathcal{L}^\alpha}F(s,x)\|_{L^{\tilde{p}}(\mathbb{M})}ds\Big\|_{L^{\tilde{q}}(0,\infty)}\\
&&\lesssim \Big\| \int^t_0 |t-s|^{-{\beta^{\star}}({1}/{p}-{1}/{\tilde{p}})/{2\alpha}}\|F(s,x)\|_{L^{{p}}(\mathbb{M})}ds\Big\|_{L^{\tilde{q}}(0,\infty)}\\
&&\lesssim \|F\|_{L^{q}((0,\infty);L^{p})}.
\end{eqnarray*}

\end{proof}

\begin{theorem}\label{th-1.8}
The following statements are valid.
\begin{itemize}	
\item[\rm (i)] Suppose that the heat kernel $p_t(\cdot,\cdot)$ satisfies {\bf (A1)}-{\bf (A3)} and  the measure $\mu$ satisfies
(\ref{eq1.3}). If $p\in[1,\infty]$ and $f\in L^p( \mathbb{ M})$,
then $e^{-t{\mathcal{L}}^{\alpha}}(f)$ is continuous on $
\mathbb{M}_+$.

\item[\rm (ii)] Suppose that the heat kernel $p_t(x,y)$ satisfies {\bf (A1)} and  the measure $\mu$ satisfies
(\ref{eq1.3}).  If
$$
\begin{cases}
p\in[1,\infty);\\
1<q<\infty; \\
{\beta^{\star}}/{p}+{2\alpha}/{q}=2\alpha;\\
(t_0,x_0)\in   \mathbb{M}_+;\\
r_0=t_0^{ {1}/{2\az}};\\
0<\|F\|_{L^{q}((0,\infty);L^{p})}<\infty,
\end{cases}
$$
then there exists a constant $C>0$ such that
\begin{align*}
\frac{1}{\widetilde{\mu}(B_{r_0}^{(\alpha)}{(t_0,x_0)})}\iint_{B_{r_0}^{(\alpha)}{(t_0,x_0)}}\exp\lf(\frac{G( F)(t,x)}
   {C\|F\|_{L^{q}((0,\infty);L^{p})}}\r)^{{q}/{(q-1)}}\,d\mu(x)\,dt\ls 1.
\end{align*}

\item[\rm(iii)] Suppose that the heat kernel $p_t(\cdot,\cdot)$ satisfies {\bf (A1)}-{\bf (A3)} and  the measure $\mu$ satisfies
(\ref{eq1.3}).  If
$$
\begin{cases}
p\in[1,\infty);\\
1<q<\infty; \\
{\beta^{\star}}/{p}+{2\alpha}/{q}<2\alpha;\\
(t,x)\in   \mathbb{M}_+;\\
\|F\|_{L^{q}((0,\infty);L^{p})}<\infty,
\end{cases}
$$
then $G(F)$ is H\"older continuous in the sense that
\begin{align*}
   &|G(F)(t,x)-G(F)(t_0,x_0)|\\
   &\ \lesssim  \Big(   \min\{ |t_2-t_1|^{2-{1}/{2\alpha}-{1}/{q}-{\beta^{\star}}/{2\az p}}, |t_2-t_1|^{1-{1}/{q}-{\beta^{\star}}/{2\az p}}
   \}\\
   &\ \ +d(x,x_0)^{{2\az(q-1)}/{q}-{\beta^{\star}}/{p}}
   \Big)
   \|F\|_{L^{q}((0,\infty);L^{p})}
\end{align*}
holds for any two sufficient close points $(t_0,x_0), (t,x)\in
\mathbb{M}_+$.
\end{itemize}
\end{theorem}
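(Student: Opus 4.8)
The plan is to handle the three parts with a shared set of kernel estimates. Besides the decay bound of Proposition \ref{prop-1} and the time-derivative bound of Proposition \ref{prop-3}, I would first record one new ingredient, a spatial Hölder bound for $K^{\mathcal L}_{\alpha,t}$. Subordination gives $K^{\mathcal L}_{\alpha,t}(x,y)-K^{\mathcal L}_{\alpha,t}(x_0,y)=\int_0^\infty\eta_t^\alpha(s)[p_s(x,y)-p_s(x_0,y)]\,ds$, and inserting assumption {\bf (A3)} together with the lower density bound $\mu(B(x,\sqrt s))\gtrsim s^{\beta^\star/2}$ from (\ref{eq1.3}), the same substitution $r=d(x,y)/\sqrt s$ used in Proposition \ref{prop-1} should produce
\begin{equation*}
|K^{\mathcal L}_{\alpha,t}(x,y)-K^{\mathcal L}_{\alpha,t}(x_0,y)|\lesssim d(x,x_0)^{\varepsilon}\,\frac{t}{(t^{1/2\alpha}+d(x,y))^{\beta^\star+2\alpha+\varepsilon}}.
\end{equation*}
This spatial analogue of Proposition \ref{prop-1} is the device that carries a modulus of continuity through parts (i) and (iii).

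For (i) I would establish joint continuity on compact subsets of $\mathbb M_+$. Fixing $t$ in a compact interval $[a,b]\subset(0,\infty)$ and integrating the kernel difference above against $f$, Hölder's inequality and the density bound show that the $L^{p'}$-norm in $y$ of the right-hand side is finite and controlled by a negative power of $a$, giving $|e^{-t\mathcal L^\alpha}f(x)-e^{-t\mathcal L^\alpha}f(x_0)|\lesssim d(x,x_0)^\varepsilon$, uniformly in $t\in[a,b]$. Continuity in $t$ follows in the same manner from Proposition \ref{prop-3}, since $|e^{-t\mathcal L^\alpha}f(x)-e^{-t'\mathcal L^\alpha}f(x)|\le|t-t'|\sup_\tau\int_{\mathbb M}|\partial_\tau K^{\mathcal L}_{\alpha,\tau}(x,y)|\,|f(y)|\,d\mu(y)$, whose $y$-integral is finite for $\tau$ bounded below. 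Combining the two moduli yields joint continuity; here {\bf (A3)} is needed only for the spatial part.

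For (ii) the critical relation $\beta^\star/p+2\alpha/q=2\alpha$ rewrites as $\beta^\star/(2\alpha p)=1/q'$ with $q'=q/(q-1)$, and the assertion is a Trudinger--Moser--Adams inequality for the parabolic potential $G$. The plan is to expand the exponential in its Taylor series and reduce everything to a single $L^m$ bound of the form
\begin{equation*}
\Big(\frac{1}{\widetilde\mu(B_{r_0}^{(\alpha)})}\iint_{B_{r_0}^{(\alpha)}}|G(F)|^m\,d\mu\,dt\Big)^{1/m}\lesssim m^{1/q'}\,\|F\|_{L^{q}((0,\infty);L^{p})},\qquad m\ge1.
\end{equation*}
Granting this with $m=kq'$, the normalized average equals $\sum_k(k!)^{-1}(C\|F\|)^{-kq'}\widetilde\mu(B_{r_0}^{(\alpha)})^{-1}\iint_{B_{r_0}^{(\alpha)}}|G(F)|^{kq'}\,d\mu\,dt$, which is dominated by $\sum_k(k!)^{-1}(C_0/C)^{kq'}(kq')^{k}$; this converges by Stirling's formula once $C$ is taken large enough, giving the stated bound $\lesssim1$. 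The $L^m$ estimate itself I would extract from the Hardy--Littlewood--Sobolev/Young argument underlying Theorem \ref{t11} and Lemma \ref{le-4.1}, while tracking how the constant depends on $m$; the scaling $\beta^\star/(2\alpha p)=1/q'$ is precisely what forces the factor $m^{1/q'}$. I expect this sharp control of the $m$-dependence to be the main obstacle of the whole theorem.

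For (iii), subcriticality $\beta^\star/p+2\alpha/q<2\alpha$ upgrades $G(F)$ to a Hölder function, and I would estimate the increment via the splitting
\begin{equation*}
|G(F)(t,x)-G(F)(t_0,x_0)|\le|G(F)(t,x)-G(F)(t,x_0)|+|G(F)(t,x_0)-G(F)(t_0,x_0)|.
\end{equation*}
The spatial increment is controlled by the kernel Hölder bound above: integrating it against $F(s,\cdot)$ and applying Hölder in the $(s,y)$ variables produces $d(x,x_0)^{2\alpha(q-1)/q-\beta^\star/p}\|F\|_{L^q((0,\infty);L^p)}$, the exponent being positive exactly by subcriticality. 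For the temporal increment, say $t_0<t$, I would write
\begin{equation*}
G(F)(t,x_0)-G(F)(t_0,x_0)=\int_{t_0}^{t}e^{-(t-s)\mathcal L^\alpha}F(s,\cdot)(x_0)\,ds+\int_0^{t_0}\big[e^{-(t-s)\mathcal L^\alpha}-e^{-(t_0-s)\mathcal L^\alpha}\big]F(s,\cdot)(x_0)\,ds.
\end{equation*}
Bounding the first integral by Lemma \ref{le-4.1} (with $L^\infty$ target) and Hölder in $s$ gives the exponent $1-1/q-\beta^\star/2\alpha p$, while bounding the second by the mean-value inequality together with Proposition \ref{prop-3} gives $2-1/2\alpha-1/q-\beta^\star/2\alpha p$; taking the smaller over the relevant time regimes produces the stated minimum. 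Assembling the spatial and temporal contributions completes (iii). The exponent bookkeeping here is routine once the kernel bounds are in hand, so the genuine difficulty of the theorem lies in the constant tracking of part (ii).
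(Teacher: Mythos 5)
Your part (i) follows essentially the paper's argument (spatial modulus from {\bf (A3)} pushed through the subordination formula, temporal modulus from a time-derivative bound on the semigroup), so no comment there. Your part (ii) takes a genuinely different route: the paper proves a \emph{pointwise} estimate, splitting the Duhamel integral at a scale $r$, bounding the near piece by $\|F\|_{L^q((0,\infty);L^p)}\big(\ln\frac{t}{t-r}\big)^{(q-1)/q}$ and the far piece by $|t-r|^{1/q}\,\mathrm{M}_{\mathbb{R}}\big(\|F(\cdot,\cdot)\|_{L^p(\mathbb{M})}\big)(t)$, optimizing in $r$, and then integrating the resulting $\max\{1,\log^{(q-1)/q}\}$ bound over the parabolic ball using the $L^q$-boundedness of the one-dimensional maximal operator; you instead Taylor-expand the exponential and invoke $L^m$ bounds with constant $O(m^{1/q'})$. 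Your route is viable and classical, and the $m$-dependence you single out as ``the main obstacle'' is in fact an explicit computation rather than a difficulty: under the critical relation, Lemma \ref{le-4.1} gives $|G(F)(t,x)|\lesssim\int_0^t(t-s)^{-1/q'}\|F(s,\cdot)\|_{L^p(\mathbb{M})}\,ds$, so your $L^m$ bound is Young's inequality on $\mathbb{R}$ with kernel $k(u)=u^{-1/q'}\mathbf{1}_{(0,3t_0)}(u)$, and one computes $\|k\|_{L^\rho}=\big(\tfrac{mq+q-m}{q}\big)^{1/\rho}(3t_0)^{1/m}\approx m^{1/q'}t_0^{1/m}$ for $1/\rho=1+1/m-1/q$, which is exactly the growth you need. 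Still, as written, this key step is asserted, not proved.

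The genuine gap is the spatial increment in part (iii). You claim that integrating the kernel H\"older bound $|K^{\mathcal{L}}_{\alpha,t_0-s}(x,y)-K^{\mathcal{L}}_{\alpha,t_0-s}(x_0,y)|\lesssim d(x,x_0)^{\varepsilon}(t_0-s)\big((t_0-s)^{1/2\alpha}+d(x_0,y)\big)^{-(\beta^{\star}+2\alpha+\varepsilon)}$ against $F$ and applying H\"older in $(s,y)$ ``produces'' $d(x,x_0)^{2\alpha(q-1)/q-\beta^{\star}/p}\|F\|_{L^q((0,\infty);L^p)}$. It does not: after H\"older in $y$ the bound is $\approx d(x,x_0)^{\varepsilon}(t_0-s)^{-(\varepsilon+\beta^{\star}/p)/2\alpha}\|F(s,\cdot)\|_{L^p(\mathbb{M})}$, and H\"older in $s$ then needs $(\varepsilon+\beta^{\star}/p)q'/2\alpha<1$, i.e.\ $\varepsilon<2\alpha(q-1)/q-\beta^{\star}/p$. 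But $\varepsilon$ is the \emph{fixed} structural constant of {\bf (A3)}, while the subcritical gap $2\alpha(q-1)/q-\beta^{\star}/p$ can be arbitrarily small, so in general the $s$-integral diverges at $s=t_0$; and even in the convergent regime the modulus you obtain is $d(x,x_0)^{\varepsilon}$ (times a power of $t_0$), not the stated exponent. This is exactly why the paper decomposes at the scale $\delta=d(x,x_0)$: for $y\in B(x_0,3\delta)$ it abandons the difference structure and applies the size bound of Proposition \ref{prop-1} to each kernel separately, the smallness coming from the small spatial ball (for $s$ away from $t_0$) or the short time interval $(t_0-(2\delta)^{2\alpha},t_0)$ (for $s$ near $t_0$); only on $\mathbb{M}\setminus B(x_0,3\delta)$ does it use the H\"older kernel bound, where $d(x_0,y)\ge 3\delta$ regularizes the integrals and the tail produces a compensating factor $\delta^{2\alpha(q-1)/q-\beta^{\star}/p-\varepsilon}$, so both regimes yield $\delta^{2\alpha(q-1)/q-\beta^{\star}/p}$. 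Without this near/far splitting your spatial estimate fails; your temporal estimate, by contrast, follows the paper's scheme and is fine.
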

\begin{proof}
(i) Let
$$
\begin{cases}
(t,x)\in \mathbb{M}_+;\\
(t_0,x_0)\in \mathbb{M}_+;\\
f\in L^p(  \mathbb{M});\\
p\in [1,\infty];\\
0\le t_1<t_2<\infty,
\end{cases}
$$
and $\{\eta^{\alpha}_t\}$ be a family of non-negative
continuous functions on $(0,+\infty)$ defined in (\ref{eq-heat}) such that for all $t>0$
\begin{equation*}e^{-t\mathcal{L}^\alpha}f(x)=\int^{\infty}_{0} \eta^{\alpha}_t(s)e^{-s\mathcal{L}}f(x)ds=\int_{\mathbb{M}} K^{\mathcal{L}}_{\alpha,t}(x,y)f(y)d\mu(y),
\end{equation*}
where \begin{equation*}K^{\mathcal{L}}_{\alpha,t}(x,y)=\int^{\infty}_{0} \eta^{\alpha}_t(s) p_s(x,y)ds
\end{equation*}
and $p_s(\cdot,\cdot)$ is the kernel of $e^{-s\mathcal{L}}$.

Via {\bf (A3)}, we know that
$K^{\mathcal{L}}_{\alpha,t}(\cdot,\cdot)$ is continuous  with
respect to the variable $x$. One gets that
$K^{\mathcal{L}}_{\alpha,t}
f(t_0,x)=e^{-t_0{\mathcal{L}}^\alpha}f(x)$ is also continuous.
Meanwhile, for $x\in  M$, one gets
$$
 e^{-t{\mathcal{L}}^\alpha} f(t_1,x)- e^{-t{\mathcal{L}}^\alpha}  f(t_2,x)=\int_{t_1}^{t_2}{\mathcal{L}}^\alpha e^{-t{\mathcal{L}}^\alpha}f(x)\,dt.
$$
Via Lemma \ref{le-4.1}, we know
$$
\Big\|{\mathcal{L}}^\alpha
e^{-t{\mathcal{L}}^\alpha}f\Big\|_{L^\infty(\mathbb{M})} \lesssim
\begin{cases}
   t^{-{1}/{2\alpha}-{\beta^{\star}}/{2\alpha p}}\|f\|_{L^p(\mathbb{M})},&  p\in [1,\infty);\\
    t^{-{1}/{2\alpha}}\|f\|_{L^\infty(\mathbb{M})},&  p=\infty,
\end{cases}
$$
and hence
\begin{align*}
&|e^{-t{\mathcal{L}}^\alpha} f(t_1,x)-e^{-t{\mathcal{L}}^\alpha}
f(t_2,x)|\\
&\ \lesssim \|f\|_{L^p(\mathbb{M})}
\begin{cases}\big|t_1^{1-{1}/{2\alpha}-{\beta^{\star}}/{2\alpha p}}-t_2^{1-{1}/{2\alpha}-{\beta^{\star}}/{2\alpha p}}\big|,& p\in [1,\infty);\\
|  t_1^{1-{1}/{2\alpha}} - t_2^{1-{1}/{2\alpha}} |,&  p=\infty.
\end{cases}
\end{align*}
By the above facts, we conclude that if $(t,x)\to (t_0,x_0)$ then
\begin{align*}
&|e^{-t{\mathcal{L}}^\alpha}f(t,x)-e^{-t{\mathcal{L}}^\alpha} f(t_0,x_0)|\\
&\le|e^{-t{\mathcal{L}}^\alpha} f(t_0,x)-e^{-t{\mathcal{L}}^\alpha} f(t_0,x_0)|
+|e^{-t{\mathcal{L}}^\alpha} f(t,x)-e^{-t{\mathcal{L}}^\alpha} f(t_0,x)|\to 0,
\end{align*}
and hence $e^{-t{\mathcal{L}}^\alpha}f$ is continuous on $ \mathbb{M}_+$.

(ii) Let $(t,x)\in   \mathbb{M}_+$ be fixed.
Then we have
$$
    |G(F)(t,x)|\le\int_0^t \int_\mathbb{M} K^{\mathcal{L}}_{\alpha,t-s} (x, y)|F(s,y)|\,d\mu(y)\,ds=\mathrm{I}+\mathrm{II},
    $$
    where
    $$    \begin{cases}
    \mathrm{I}:=\int_0^r\int_\mathbb{M} K^{\mathcal{L}}_{\alpha,t-s}(x,y)|F(s,y)|\,d\mu(y)\,ds; \\
    \mathrm{II}:=\int_r^t\int_\mathbb{M} K^{\mathcal{L}}_{\alpha,t-s}(x,y)|F(s,y)|\,d\mu(y)\,ds.
\end{cases}
$$
By using the H\"older inequality and the assumption $ \beta^{\star} /p+
{2\az}/q=2\az$, we get
\begin{align*}
\mathrm{I}&\lesssim\int_0^r\int_{\mathbb{M}}
   \frac{|t-s|}{(|t-s|^{{1}/{2\az}}+d(x,y))^{\beta^{\star}+2\alpha}}|F(s,y)|\,d\mu(y)\,ds\\
   &\ \lesssim\int_0^r\frac{|t-s| \|F(s,\cdot)\|_{L^p(\mathbb{M})}}{\lf(\int_{\mathbb{M}}
   \frac{d\mu(y)}{(|t-s|^{{1}/{2\az}}+d(x,y))^{-(\beta^{\star}+2\alpha)p/({p-1})}}\r)^{{(p-1)}/{p}}}\,ds\\
&\ \lesssim\int_0^r\frac{\|F(s,\cdot)\|_{L^p(\mathbb{M})}}{|t-s|^{{\beta^{\star}}/{2p\az}}}\,ds\\
&\ \lesssim\|F\|_{L^{q}((0,\infty);L^{p})}
\lf(\int_0^r\frac{ds}{|t-s|^{{q\beta^{\star}}/{2p\az(q-1)}}}\r)^{{(q-1)}/{q}}\\
&\ \lesssim\|F\|_{L^{q}((0,\infty);L^{p})}\lf(\ln \frac{t}{t-r}\r)^{(q-1)/q}.
\end{align*}
 Denote by $\mathrm{M}_\mathbb R$ the Hardy-Littlewood maximal function on $\mathbb R$.  Similarly, we obtain
\begin{align*}
\mathrm{II}&\lesssim\int_r^t\int_{ M}
   \frac{|t-s|}{(|t-s|^{{1}/{2\az}}+d(x,y))^{\beta^{\star}+2\alpha}}|F(s,y)|\,d\mu(y)\,ds\\
&\ \lesssim\int_r^t\frac{\|F(s,\cdot)\|_{L^p( M)}}{|t-s|^{{\beta^{\star}}/{2p\az}}}\,ds\\
&\ \lesssim\sum_{k=-\infty}^{0}\int_{t-2^k|t-r|}^{t-2^{k-1}|t-r|}
\frac{\|F(s,\cdot)\|_{L^p(\mathbb{M})}}{|t-s|^{{\beta^{\star}}/{2p\az}}}\,ds\\
&\ \lesssim\sum_{k=-\infty}^{0}\frac{1}{(2^k|t-r|)^{{\beta^{\star}}/{2p\az}}}
\int_{t-2^k|t-r|}^{t}
\|F(s,\cdot)\|_{L^p(\mathbb{M})}\,ds\\
&\ \lesssim\sum_{k=-\infty}^{0}(2^k|t-r|)^{1-{\beta^{\star}}/{2p\az}}\cm_\rr(\|F(\cdot,\cdot)\|_{L^p(\mathbb{M})})(t) \\
&\ \lesssim|t-r|^{1/q}\cm_\rr(\|F(\cdot,\cdot)\|_{L^p(\mathbb{M})})(t).
\end{align*}
Via choosing $$
r\in (0,t)\ \ \&\ \ |t-r|^{1/q}=\min\lf\{t^{1/q}, \,
\frac{\|F\|_{L^{q}((0,\infty);L^{p})}}{\cm_\rr(\|F(\cdot,\cdot)\|_{L^p(\mathbb{M})})(t)}\r\},
$$
we see
\begin{eqnarray*}
    |G(F)(t,x)|\lesssim\|F\|_{L^{q}((0,\infty);L^{p})}
    \max\Big\{1,\,\Big[\ln\frac{t^{1/q}\cm_{\rr} (\|F\|_{L^p(\mathbb{M})})(t)}
{\|F\|_{L^{q}((0,\infty);L^{p})}}\Big]^{{(q-1)}/{q}}\Big\}.
\end{eqnarray*}
Let $r_0=t_0^{{1}/{2\az}}$. Then there exists a  constant $C>0$ such
that
\begin{eqnarray*}
&&\iint_{B_{r_0}^{(\alpha)}{(t_0,x_0)}}\exp\Big(\frac{G(F)(t,x)}
   {C\|F\|_{L^{q}((0,\infty);L^{p})}}\Big)^{{q}/({q-1})}\,d\mu(x)\,dt\\
&&\hs\lesssim
\iint_{B_{r_0}^{(\alpha)}{(t_0,x_0)}}\frac{t^{1/q}\cm_{\rr}
   (\|F\|_{L^p(\mathbb{M})})(t)}
{\|F\|_{L^{q}((0,\infty);L^{p})}}\,d\mu(x)\,dt\\
&&\hs\lesssim
\mu(B_{r_0}^{(\alpha)}{(t_0,x_0)})t_0^{1/q}\int_{0}^{2t_0}
\frac{\cm_{\rr} (\|F\|_{L^p(\mathbb{M})})(t)}
{\|F\|_{L^{q}((0,\infty);L^{p})}}\,dt\\
&&\hs\lesssim t_0^{1/q}r_0^{2\az-2\az/q}\mu(B_{r_0}^{(\alpha)}{(t_0,x_0)})\\
&&\hs= \widetilde{\mu}(B_{r_0}^{(\alpha)}{(t_0,x_0)}),
\end{eqnarray*}
which completes the proof of (ii).

(iii) Given a point $(t_0,x_0)\in   \mathbb{M}_+$, let $x\in \mathbb{M}$ be sufficiently
close to $x_0$ and $\dz=d(x,x_0)$. Then
\begin{align*}
&|G(F)(t_0,x_0)- G(F)(t_0,x)|\\
&\ \leq \int_0^{t_0}\int_ \mathbb{M}|K^{\mathcal{L}}_{\alpha,t_0-s}(x_0,y)
-K^{\mathcal{L}}_{\alpha,t_0-s}(x,y)||F(y,s)|\,d\mu(y)\,ds\\
&\ :=\hs\mathrm{I}+\mathrm{II},
\end{align*}
where
$$
\begin{cases}
I:=\int_0^{t_0}\int_{B(x_0,3\dz)}|K^{\mathcal{L}}_{\alpha,t_0-s}(x_0,y)
-K^{\mathcal{L}}_{\alpha,t_0-s}(x,y)||F(y,s)|\,d\mu(y)ds;\\
II:=\int_0^{t_0}\int_{ \mathbb{M}\setminus B(x_0,3\dz)}|K^{\mathcal{L}}_{\alpha,t_0-s}(x_0,y)
-K^{\mathcal{L}}_{\alpha,t_0-s}(x,y)||F(y,s)|\,d\mu(y)ds.
\end{cases}
$$
We first estimate the term $I$. By Proposition \ref{prop-4}, we can see
\begin{eqnarray*}
&&\int_{0}^{t_0}\int_{B(x_0,3\dz)}|K^{\mathcal{L}}_{\alpha,t_0-s}(x_0,y)||F(y,s)|\,d\mu(y)\,ds\\
&&\leq \int_{0}^{t_{0}-(2\delta)^{2\alpha}}\int_{B(x_0,3\dz)}\Big(\frac{|t_{0}-s|}{|t_{0}-s|^{1+\beta^{\star}/2\alpha}}\Big)|F(y,s)|\,d\mu(y)ds\\
&&+ \int_{t_{0}-(2\delta)^{2\alpha}}^{t_{0}}\int_{B(x_0,3\dz)}\frac{|t_{0}-s|}{[|t_{0}-s|^{1/2\alpha}+d(y,x_{0})]^{\beta^{\star}+2\alpha}}|F(y,s)|\,d\mu(y)ds\\
&&:=I_{1}+I_{2}.
\end{eqnarray*}
Applying H\"older's inequality, we obtain
\begin{eqnarray*}
I_{1}&\leq&\int_{0}^{t_{0}-(2\delta)^{2\alpha}}\Big(\int_{B(x_0,3\delta)}|F(y,s)|^{p}d\mu(y)\Big)^{1/p}
\frac{\delta^{\beta^{\star}(1-1/p)}}{|t_{0}-s|^{\beta^{\star}/2\alpha}} ds\\
&\leq&\delta^{\beta^{\star}(1-1/p)}\|F(\cdot,\cdot)\|_{L^{q}((0,\infty); L^{p})}\Big(\int_{0}^{t_{0}-(2\delta)^{2\alpha}}\frac{ds}{|t_{0}-s|^{\beta^{\star} q/2\alpha(q-1)}}\Big)^{q-1/q}\\
&\leq&\|F(\cdot,\cdot)\|_{L^{q}((0,\infty);
L^{p})}\delta^{2\alpha(1-1/p)-\beta^{\star}/p}.
\end{eqnarray*}
Similarly, for $I_{2}$,  we have
\begin{eqnarray*}
I_{2}&\leq&\int_{t_{0}-(2\delta)^{2\alpha}}^{t_{0}}\|F(\cdot,s)\|_{L^{p}(\mathbb{M})}\Big(\int_{\mathbb{M}}
\frac{|t_{0}-s|^{p'}d\mu(y)}{[|t_{0}-s|^{1/2\alpha}+d(x_{0},y)]^{(\beta^{\star}+2\alpha)p'}}\Big)^{1/p'}ds
\end{eqnarray*} via letting $p'=p/(p-1)$.
A direct computation gives
\begin{eqnarray*}
&&\int_{\mathbb{M}}
\frac{|t_{0}-s|^{p'}d\mu(y)}{[|t_{0}-s|^{1/2\alpha}+d(x_{0},y)]^{(\beta^{\star}+2\alpha)p'}}\\
&&\ \ \leq\int_{d(y,x_{0})<|t_{0}-s|^{1/2\alpha}}\frac{|t_{0}-s|^{p'}d\mu(y)}{[|t_{0}-s|^{1/2\alpha}+d(x_{0},y)]^{(\beta^{\star}+2\alpha)p'}}\\
&&\ \ \ +\sum^{\infty}_{k=0}\int_{2^{k}|t_{0}-s|^{1/2\alpha}\leq d(y,x_{0})<2^{k+1}|t_{0}-s|^{1/2\alpha}}\frac{|t_{0}-s|^{p'}d\mu(y)}{[|t_{0}-s|^{1/2\alpha}+d(x_{0},y)]^{(\beta^{\star}+2\alpha)p'}}\\
&&\ \ \leq\frac{|t_{0}-s|^{\beta^{\star}/2\alpha}}{|t_{0}-s|^{\beta^{\star} p'/2\alpha}}+\sum^{\infty}_{k=0}\frac{2^{(k+1)\beta^{\star}}}{2^{k(\beta^{\star}+2\alpha)p'}}\frac{|t_{0}-s|^{p'+\beta^{\star}/2\alpha}}{|t_{0}-s|^{(\beta^{\star}/2\alpha+1)p'}}\\
&&\ \ \leq\frac{1}{|t_{0}-s|^{\beta^{\star}(p'-1)/2\alpha}},
\end{eqnarray*}
which implies
\begin{eqnarray*}
I_{2}&\leq&\int_{t_{0}-(2\delta)^{2\alpha}}^{t_{0}}\|F(\cdot,s)\|_{L^{p}(\mathbb{M})}\frac{ds}{|t_{0}-s|^{\beta^{\star}/2\alpha p}}\\
&\leq&\|F(\cdot,\cdot)\|_{L^{q}((0,\infty); L^{p})}\Big(\int_{t_{0}-(2\delta)^{2\alpha}}^{t_{0}}\frac{ds}{|t_{0}-s|^{q\beta^{\star}/2\alpha p(q-1)}}\Big)^{(q-1)/q}\\
&\leq&\|F(\cdot,\cdot)\|_{L^{q}((0,\infty);
L^{p})}\delta^{2\alpha(1-1/p)-\beta^{\star}/p}.
\end{eqnarray*}

The integral
$$\int_{0}^{t_0}\int_{B(x_0,3\dz)}|K^{\mathcal{L}}_{\alpha,t_0-s}(x,y)||F(y,s)|\,d\mu(y)\,ds$$
can be handled similarly. Summarizing the above estimates, we have
proved that
\begin{align*}
\mathrm{I} & \le \int_0^{t_0}\int_{B(x_0,3\dz)}|K^{\mathcal{L}}_{\alpha,t_0-s}(x_0,y)||F(y,s)|\,d\mu(y)\,ds\\
&\hs+ \int_0^{t_0}\int_{B(x,4\dz)}|K^{\mathcal{L}}_{\alpha,t_0-s}(x,y)||F(y,s)|\,d\mu(y)\,ds\\
   &\lesssim\|F\|_{L^{q}((0,\infty);L^{p})}d(x,x_0)^{{2\az(q-1)}/{q}-{\beta^{\star}}/{p}}.
\end{align*}

To estimate the term $\mathrm{II}$, we utilize {\bf (A3)} and
\begin{equation*}  K^{\mathcal{L}}_{\alpha,t}(x,y)=\int^{\infty}_{0} \eta^{\alpha}_{t}(s)  p_s(x,y)ds
\end{equation*} to derive
$$| K^{\mathcal{L}}_{\alpha,t}(x,y)- K^{\mathcal{L}}_{\alpha,t}(x_0,y)|\lesssim \frac{td(x,x_0)^{\varepsilon}}{\big[t^{1/{2\alpha}}+d(x,y)\big]^{(\beta^{\star}+\varepsilon+2\alpha)}}.
$$
This, plus  the H\"older inequality, yields
\begin{align*}
\mathrm{II} &\le \int_0^{t_0}\int_{\mathbb{M}\setminus B(x_0,3\dz)}{\dz^{\varepsilon}}
\Big(\frac{|t_0-s|}{(|t_0-s|^{{1}/{2\az}}+d(x_0,y))^{\beta^{\star}+\varepsilon+2\alpha}}\Big)|F(y,s)|\,d\mu(y)\,ds\\
&\ \lesssim\int_{0}^{t_0-(2\dz)^{2\az}}\int_{\mathbb{M}\setminus B(x_0,3\dz)}\Big(\frac{\dz^{\varepsilon} |t_0-s|}
{(|t_0-s|^{1/{2\az}}+d(x_0,y))^{\beta^{\star}+\varepsilon+2\alpha}}\Big)|F(y,s)|\,d\mu(y)\,ds\\
&\ \hs\hs+\int_{t_0-(2\dz)^{2\az}}^{t_0}\int_{\mathbb{M}\setminus B(x_0,3\dz)}\Big(\frac{\dz^{{\varepsilon}} |t_0-s|}
{d(x_0,y)^{\beta^{\star}+{\varepsilon}+2\alpha}}\Big)|F(y,s)|\,d\mu(y)\,ds\\
&\ \leq \mathrm{II}_1+\mathrm{II}_2,
\end{align*}
where
\begin{align*}
II_{1}&:=\int_{0}^{t_0-(2\dz)^{2\az}}|t_0-s|\delta^\varepsilon
\|F(\cdot,s)\|_{L^p(\mathbb{M})}\\
&\ \ \times \Big(\int_{\mathbb{M}\setminus B(x_0,3\dz)}
\frac{d\mu(y)}{(|t_0-s|^{1/{2\az}}+d(x_0,y))^{p'(\beta^{\star}+\varepsilon+2\alpha)}}\Big)^{{1}/{p'}}\,ds
\end{align*}
and
$$II_{2}:=\int_{t_0-(2\dz)^{2\az}}^{t_0}|t_0-s|\dz^\varepsilon
\|F(\cdot,s)\|_{L^p(\mathbb{M})} \Big(\int_{\mathbb{M}\setminus
B(x_0,3\dz)} \frac{d\mu(y)}
{d(x_0,y)^{p'(\beta^{\star}+\varepsilon+2\alpha)}}\Big)^{1/p'}\,ds.$$

Furthermore, using the H\"older inequality again,  we have
\begin{align*}
\mathrm{II}_1&\lesssim \int_{0}^{t_{0}-(2\dz)^{2\az}}\frac{\|F(\cdot,s)\|_{L^p(\mathbb{M})}}{
\Big(\int_{\mathbb{M}\setminus B(x_0,3\dz)} \frac{\dz^{\varepsilon p'}
|t_0-s|^{p'} d\mu(y)}
{(|t_0-s|^{1/{2\az}}+d(x_0,y))^{p'(\beta^{\star}+\varepsilon+2\alpha)}}\Big)^{-1/p'}}\,ds\\
&\ \lesssim \int_{0}^{t_0-(2\dz)^{2\az}}
\|F(\cdot,s)\|_{L^p(\mathbb{M})}
\Big(\sum^{\infty}_{j=1}\frac{\dz^{\varepsilon
p'}|t_0-s|^{p'}(3^j\delta)^{\beta^{\star}}}
{(|t_0-s|^{1/{2\az}}+3^j\delta)^{p'(\beta^{\star}+\varepsilon+2\alpha)}}\Big)^{1/p'}\,ds\\
&\ \lesssim
\int_{0}^{t_0-(2\dz)^{2\az}}
\|F(\cdot,s)\|_{L^p(\mathbb{M})}
\Big(\int^{\infty}_{3\delta}\frac{\dz^{\varepsilon p'-1} |t_0-s|^{p'}dr}
{(|t_0-s|^{1/{2\az}}+r)^{p'(\beta^{\star}+\varepsilon+2\alpha)-\beta^{\star}}}\Big)^{1/p'}\,ds\\
&\ \lesssim
\int_{0}^{t_0-(2\dz)^{2\az}}\|F(\cdot,s)\|_{L^p(\mathbb{M})}
\frac{\delta^{\varepsilon-{1}/{p'}}ds}
{(|t_0-s|^{1/{2\az}}+\delta)^{{\beta^{\star}}/{p}+\varepsilon-{1}/{p'}}}\\
&\ \lesssim\|F\|_{L^{q}((0,\infty);L^{p})}\int_{0}^{t_0-(2\dz)^{2\az}}
\frac{\delta^{\varepsilon-1/p'}ds}{(|t_0-s|^{1/{2\az}}+\delta)^{{\beta^{\star}}/{p}+\varepsilon-{1}/{p'}}}\\
&\ \lesssim\|F\|_{L^{q}((0,\infty);L^{p})}[d(x,x_0)]^{{2\az(q-1)}/{q}-{\beta^{\star}}/{p}},
\end{align*}
where ${1}/{p}+{1}/{p'}=1.$ A similar method as that of
$\mathrm{II}_1$ shows
\begin{eqnarray*}
\mathrm{II}_2&\lesssim&\|F\|_{L^{q}((0,\infty);L^{p})}d(x,x_0)^{{2\az(q-1)}/{q}-{\beta^{\star}}/{p}}.
\end{eqnarray*}
 Thus,
\begin{eqnarray*}
   &&|G(F)(t_0,x_0)- G(F)(t_0,x)|\lesssim
   \|F\|_{L^{q}((0,\infty);L^{p})}d(x,x_0)^{{2\az(q-1)}/{q}-{\beta^{\star}}/{p}}.
\end{eqnarray*}

Let $(x,t_1), (x,t_2)\in   \mathbb{M}_+$. Without loss of
generality, we may assume $t_1>t_2$, and   write
\begin{align*}
   &|G(F)(t_1,x)-G(F)(t_2,x)|\\
   &\ \le \int_0^{t_2}
   \lf|\big(e^{-(t_1-s){\mathcal{L}}^\az}-e^{-(t_2-s){\mathcal{L}}^\az}\big)F(x,s)\r|\,ds\\
   &\ \hs+\int_{t_2}^{t_1}
   \lf|(e^{-(t_1-s){\mathcal{L}}^\az})F(x,s)\r|\,ds\\
   &\ :=\mathrm{III}+\mathrm{IV}.
\end{align*}
Via Lemma \ref{le-4.1},  we obtain
\begin{align*}
   \mathrm{III}&\le \int_0^{t_2}\int_{t_2-s}^{t_1-s}|{\mathcal{L}}^\az
   e^{-r{\mathcal{L}}^\az}F(x,s)|\,dr\,ds\\
   &\ \le \int_0^{t_2}\int_{t_2-s}^{t_1-s}r^{-{1}/{2\alpha}-{\beta^{\star}}/{2\alpha p}}\|F(\cdot,s)\|_{L^p(\mathbb{M})}\,dr\,ds\\
   &\ \le \int_0^{t_2}\int_0^{t_1-t_2} (t_2-s+r)^{-{1}/{2\alpha}-{\beta^{\star}}/{2\az p}}
   \|F(\cdot,s)\|_{L^p(\mathbb{M})}\,dsdr\\
   &\ \le \int_0^{t_1-t_2}\int_0^{t_2} (t_2-s+r)^{-{1}/{2\alpha}-{\beta^{\star}}/{2\az p}}\|F(\cdot,s)\|_{L^p(\mathbb{M})}\,dsdr\\
   &\ \lesssim\|F\|_{L^{q}((0,\infty);L^{p})}\int_0^{t_1-t_2} r^{{(q-1)}/{q}-{1}/{2\alpha}-{\beta^{\star}}/{2\az p}}\,dr\\
   &\ \lesssim|t_2-t_1|^{2-{1}/{2\alpha}-{1}/{q}-{\beta^{\star}}/{2\az p}}\|F\|_{L^{q}((0,\infty);L^{p})}
\end{align*}
and
\begin{eqnarray*}
   \mathrm{IV}&\le& \int_{t_2}^{t_1}(t_1-s)^{-{\beta^{\star}}/{2\az p}}\|F(s,\cdot)\|_{L^p(\mathbb{M})}\,ds\\
   &\lesssim&|t_2-t_1|^{1-{1}/{q}-{\beta^{\star}}/{2\az p}}\|F\|_{L^{q}((0,\infty);L^{p})}.
\end{eqnarray*}
Therefore,
\begin{align*}
   &|G(F)(t_1,x)-G(F)(t_2,x)|\\
   &\ \lesssim  |t_2-t_1|^{1-{1}/{q}-{\beta^{\star}}/{2\az p}}
   \, \|F\|_{L^{q}((0,\infty);L^{p})}.
\end{align*}
Accordingly, if $(t,x)$ is close to
$(t_0,x_0)$, then
\begin{align*}
&|G(F)(t,x)-G(F)(t_0,x_0)|\\
&\ \leq |G(F)(t,x)-G(F)(t_0,x)|+|G(F)(t_0,x)-G(F)(t_0,x_0)|\\
&\ \lesssim\Big( \min\{ |t_2-t_1|^{2-{1}/{2\alpha}-{1}/{q}-{\beta^{\star}}/{2\az p}}, |t_2-t_1|^{1-{1}/{q}-{\beta^{\star}}/{2\az p}} \}\\
&\ \ +d(x,x_0)^{{2\az(q-1)}/{q}-{\beta^{\star}}/{p}}\Big)
\|F\|_{L^{q}((0,\infty);L^{p})},
\end{align*}
which completes the proof of (iii).
\end{proof}

 \section{$L^p$-capacities in $ \mathbb{M}_+$}\label{sec-cap}

Throughout this  section, we always assume that the heat kernel
$p_t(\cdot,\cdot)$ satisfies {\bf (A4)} and  the measure $\mu$
satisfies (\ref{eq1.3}). Since we assume that the metric space
$\mathbb{M}$ is locally compact, continuous  functions with compact
support are dense in $L^p(\mathbb{M})$ for $1<p<\infty$ (cf.
 \cite[Chapter 3]{Heinonen1}).  Denote by $C_0(\mathbb
M)$  and  $C_0(\mathbb M_+)$ the   spaces consisting  of continuous
functions with compact support on $ \mathbb M $ and  $ \mathbb M_+$,
respectively.

 For a compact subset $K$ of $
\mathbb{M}_+,$ let
$$
 {C}_{p}^{(\alpha)}(K) :=\inf\Big\{\|f\|^p_{ L^{p}(\mathbb{
M})}:\quad f\ge 0\ \&\ e^{-t{\mathcal{L}}^{\alpha}}f\geq 1_{K}\Big\}
$$ be $L^p$-capacities in $ \mathbb{M}_+$,
  where $1_{K}$ is the characteristic function of $K$. When $O$ is an open
subset of $ \mathbb{M}_+$, one defines
$$ {C}_{p}^{(\alpha)}(O) :=\sup\Big\{ {C}_{p}^{(\alpha)}(K): \mathrm{compact}\, K\subseteq O\Big\}$$
and hence for any set $E\subseteq \mathbb{M}_+$, one sets
$$ {C}_{p}^{(\alpha)}(E) :=\sup\Big\{ {C}_{p}^{(\alpha)}(K): \mathrm{open}\, O\supseteq  E\Big\}.$$

\subsection{Duality of $L^p$-capacity} To establish the adjoint formulation of $C^{(\alpha)}_{p}$, we need to find out the adjoint operator of $e^{-t{\mathcal{L}}^{\alpha}}$. Note that
for any $f\in C_{0}(\mathbb{M})$ and $G\in C_{0}(\mathbb{M}_+)$, one has
$$\int_{\mathbb M_+}e^{-t{\mathcal{L}}^{\alpha}} f(x)
G(t,x)\,d\mu(x)dt=\int_{\mathbb
M}f(y)\Big(\int_{\mathbb{M}_+}K^{\mathcal{L}}_{\alpha,t}(x,y)G(t,x)d\mu(x)dt\Big)
d\mu(y).$$ Thus, the adjoint operator, denoted by
$(e^{-t{\mathcal{L}}^{\alpha}})^{\ast}$, is defined as follows. For
all $(t,x)\in \mathbb M_+$,
$$
(e^{-t{\mathcal{L}}^{\alpha}})^{\ast}G(t,x):=\int_{\mathbb
M_+}K^{\mathcal{L}}_{\alpha,t}(x,y)G(t,x)d\mu(x)dt \ \forall\ \ G\in
C_0(\mathbb M_+).
$$
The definition of $(e^{-t{\mathcal{L}}^{\alpha}})^{\ast}$ can be
extended to the family of Borel measures $\nu$ with compact support
in $\mathbb M_+$. In fact, note that if $f$ is continuous and has a
compact support in $\mathbb M$ and $\|\nu\|_1$ stands for the total
variation of $\nu$,  then a simple calculation with the following
estimate (cf. Proposition \ref{prop-1})
$$
K^{\mathcal{L}}_{\alpha,t}(x,y)\lesssim\frac{
t}{(t^{1/2\alpha}+d(x,y))^{\beta^{\star}+2\alpha}} \quad\forall\ \
(t,x)\in\mathbb M_+,
$$
gives
$$
\Big|\int_{\mathbb M_+}e^{-t{\mathcal{L}}^{\alpha}}
f\,d\nu\Big|\lesssim\|\nu\|_1\sup_{x\in\mathbb M}|f(x)|.
$$
Hence, using Riesz representation theorem, we conclude that there exists
a Borel measure $\tilde{\nu}$ on $\mathbb M$ such that
$$
\int_{\mathbb M_+}e^{-t{\mathcal{L}}^{\alpha}} f\,d\nu=\int_{\mathbb
M}f\,d\tilde{\nu}.
$$
This indicates that $(e^{-t{\mathcal{L}}^{\alpha}})^{\ast}\nu$ may
 be given as
$$(e^{-t{\mathcal{L}}^{\alpha}})^{\ast}\nu(x)=\int_{\mathbb{M}_+}K^{\mathcal{L}}_{\alpha,t}(y,x)d\nu(t,y).$$

Next, we obtain a dual description of the capacity via the above
analysis.

\begin{proposition}\label{p22} Given $p\in(1,\infty)$. For a compact subset $K$ of $\mathbb M_+$,  let $\mathcal{M_+}(K)$ be the class of all positive measures on $\mathbb M_+$ supported
on $K$. Then

$\mathrm{ (i)}$
$$
C_{p}^{(\alpha)}(K)=\sup\big\{\|\nu\|_1^{p}:\ \nu\in\mathcal{M_+}(K)
\ \ \&\ \
\|(e^{-t{\mathcal{L}}^{\alpha}})^{\ast}\nu\|_{L^{p'}(\mathbb M)}\le
1\big\}:=\tilde{C}_{p}^{(\alpha)}(K).
$$

$\mathrm{ (ii)}$ There exists a $\nu_K\in \mathcal{M_+}(K)$ such
that
\begin{align*}\nu_K(K)&=\int_{\mathbb{M}} \big((e^{-t{\mathcal{L}}^{\alpha}})^{\ast}\nu_K(x)\big)^{p'}d\mu(x)\\
&=\int_{\mathbb M_+} e^{-t{\mathcal{L}}^{\alpha}}\big((e^{-t{\mathcal{L}}^{\alpha}})^{\ast}\nu_K\big)^{p'-1}d\nu_K
=C_{p}^{(\alpha)}(K).
\end{align*}

\end{proposition}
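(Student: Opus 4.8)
The plan is to read (i) as a minimax identity and then extract the extremal measure of (ii) from the resulting saddle point. Write $\mathcal{A}_K:=\{f\in L^p(\mathbb{M}):f\ge 0,\ e^{-t\mathcal{L}^\alpha}f\ge 1\ \text{on}\ K\}$, so that $C_p^{(\alpha)}(K)=\inf_{f\in\mathcal{A}_K}\|f\|_{L^p(\mathbb{M})}^p$, and, with a slight local abuse, abbreviate $T:=e^{-t\mathcal{L}^\alpha}$ with adjoint $T^\ast$ acting on measures as above. The inequality $\tilde{C}_p^{(\alpha)}(K)\le C_p^{(\alpha)}(K)$ is routine: for $\nu\in\mathcal{M_+}(K)$ with $\|T^\ast\nu\|_{L^{p'}(\mathbb{M})}\le 1$ and any $f\in\mathcal{A}_K$, the adjoint relation and H\"older's inequality give
\[
\|\nu\|_1=\nu(K)\le\int_{\mathbb{M}_+}Tf\,d\nu=\int_{\mathbb{M}}f\,T^\ast\nu\,d\mu\le\|f\|_{L^p(\mathbb{M})}\|T^\ast\nu\|_{L^{p'}(\mathbb{M})}\le\|f\|_{L^p(\mathbb{M})}.
\]
Taking the infimum over $f\in\mathcal{A}_K$ and then the supremum over admissible $\nu$ yields $\tilde{C}_p^{(\alpha)}(K)\le C_p^{(\alpha)}(K)$.

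For the reverse inequality I would encode the constraint defining $\mathcal{A}_K$ by a penalty. Since $\sup_{\nu\in\mathcal{M_+}(K)}\int_K(1-Tf)\,d\nu$ equals $0$ when $f\in\mathcal{A}_K$ and $+\infty$ otherwise,
\[
C_p^{(\alpha)}(K)^{1/p}=\inf_{f\ge 0}\ \sup_{\nu\in\mathcal{M_+}(K)}\Big[\|f\|_{L^p(\mathbb{M})}+\int_K(1-Tf)\,d\nu\Big].
\]
The crux is to exchange the infimum and the supremum. Granting this, the inner infimum is explicit: with $g:=T^\ast\nu\ge 0$ one checks that $\inf_{f\ge 0}\big[\|f\|_{L^p(\mathbb{M})}-\int_{\mathbb{M}}fg\,d\mu\big]=0$ when $\|g\|_{L^{p'}(\mathbb{M})}\le 1$ (by H\"older, attained at $f=0$) and $=-\infty$ otherwise (test with $f=g^{p'-1}$ and let its scale tend to $+\infty$). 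Hence the swapped problem reduces to $\sup\{\nu(K):\nu\in\mathcal{M_+}(K),\ \|T^\ast\nu\|_{L^{p'}(\mathbb{M})}\le 1\}$, and raising to the power $p$ gives $C_p^{(\alpha)}(K)=\tilde{C}_p^{(\alpha)}(K)$, proving (i). I expect the minimax exchange to be the main obstacle: I would justify it by Sion's minimax theorem, the functional being convex in $f$ and affine---hence concave---in $\nu$, after restricting $\nu$ to a set $\{\nu\in\mathcal{M_+}(K):\nu(K)\le M\}$ that is convex and weak-$\ast$ compact by Banach--Alaoglu, since $\mathcal{M}(K)=C(K)^\ast$ for compact $K$. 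The semicontinuity input needed is that $\nu\mapsto\int_K Tf\,d\nu$ be weak-$\ast$ upper semicontinuous; this follows from the regularity of the kernel $K^{\mathcal{L}}_{\alpha,t}$ and the bound of Proposition \ref{prop-1} (for $f\ge 0$ the potential $Tf$ is lower semicontinuous, and continuous for $f\in C_0(\mathbb{M})$, which suffices by density).

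Finally, for (ii) I would take $\nu_K$ to be an extremizer of the dual problem and $f_K$ a minimizer of the primal problem; existence of the former follows from the weak-$\ast$ compactness above together with upper semicontinuity of $\nu\mapsto\nu(K)$, and of the latter from reflexivity of $L^p(\mathbb{M})$ and weak lower semicontinuity of the norm ($\mathcal{A}_K$ being convex and norm-closed, hence weakly closed). The pair $(f_K,\nu_K)$ is a saddle point, so the Karush--Kuhn--Tucker relations hold: stationarity in $f$ forces $f_K=(T^\ast\nu_K)^{p'-1}$ on $\{T^\ast\nu_K>0\}$, and since $(p'-1)p=p'$ this yields $\|f_K\|_{L^p(\mathbb{M})}^p=\int_{\mathbb{M}}(T^\ast\nu_K)^{p'}\,d\mu$; complementary slackness gives $Tf_K=1$ $\nu_K$-almost everywhere. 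Therefore
\[
\nu_K(K)=\int_{\mathbb{M}_+}Tf_K\,d\nu_K=\int_{\mathbb{M}}f_K\,T^\ast\nu_K\,d\mu=\int_{\mathbb{M}}(T^\ast\nu_K)^{p'}\,d\mu=\int_{\mathbb{M}_+}T\big((T^\ast\nu_K)^{p'-1}\big)\,d\nu_K,
\]
the last equality reinserting $f_K=(T^\ast\nu_K)^{p'-1}$ and using $Tf_K=1$ on $\supp\nu_K$. Since $f_K\in\mathcal{A}_K$ is a primal minimizer, each of these common values equals $\|f_K\|_{L^p(\mathbb{M})}^p=C_p^{(\alpha)}(K)$, which is exactly the chain of identities asserted in (ii).
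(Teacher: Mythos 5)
Your proposal is correct in substance, and at its core it is the same argument as the paper's: the inequality $\tilde{C}_{p}^{(\alpha)}(K)\le C_{p}^{(\alpha)}(K)$ by the adjoint relation plus H\"older, the reverse inequality by a minimax exchange over pairs $(f,\nu)\in L^p_+(\mathbb M)\times\mathcal{M}_+(K)$, and part (ii) by combining primal and dual extremizers with the forced equality cases of H\"older; your ``KKT relations'' are exactly the paper's explicit chain $C_{p}^{(\alpha)}(K)=\nu_K(K)\le\int_{\mathbb M_+}e^{-t\mathcal{L}^{\alpha}}f_K\,d\nu_K=\int_{\mathbb M}f_K\,(e^{-t\mathcal{L}^{\alpha}})^{\ast}\nu_K\,d\mu\le\|(e^{-t\mathcal{L}^{\alpha}})^{\ast}\nu_K\|_{L^{p'}(\mathbb M)}\|f_K\|_{L^p(\mathbb M)}=C_{p}^{(\alpha)}(K)$, which yields $f_K=\big((e^{-t\mathcal{L}^{\alpha}})^{\ast}\nu_K\big)^{p'-1}$ and $e^{-t\mathcal{L}^{\alpha}}f_K=1$ $\nu_K$-a.e. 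The difference is in how the minimax is packaged. The paper normalizes: it takes $X=\{\nu\in\mathcal{M}_+(K):\nu(K)=1\}$, which is already weak-$\ast$ compact and convex, takes $Y$ to be the nonnegative part of the unit ball of $L^p(\mathbb M)$, and applies the bilinear minimax theorem \cite[Theorem 2.4.1]{AH} to $\mathsf{E}(\nu,f)=\int_{\mathbb M_+}e^{-t\mathcal{L}^{\alpha}}f\,d\nu$, reading off $\min_{X}\sup_{Y}\mathsf{E}$ as $\min_\nu\|(e^{-t\mathcal{L}^{\alpha}})^{\ast}\nu\|_{L^{p'}(\mathbb M)}/\nu(K)$ and $\sup_{Y}\min_{X}\mathsf{E}$ as $\big(C_p^{(\alpha)}(K)\big)^{-1/p}$. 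You instead use a Lagrangian penalty and Sion's theorem after truncating to $\{\nu(K)\le M\}$. That is workable, but it leaves a step you did not carry out: once you truncate, the penalty is no longer exact, so $\inf_f\sup_{\nu(K)\le M}[\cdots]$ need not equal $C_p^{(\alpha)}(K)^{1/p}$ for finite $M$; you must let $M\to\infty$ and verify that both sides of the truncated identity converge to the untruncated values (this does work: a near-minimizer $f_M$ of the truncated problem satisfies $e^{-t\mathcal{L}^{\alpha}}f_M\ge 1-O(1/M)$ on $K$, so a rescaling of $f_M$ is admissible). The paper's normalization device avoids this issue entirely, which is what its formulation buys.

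Two smaller points. First, the semicontinuity you need for Sion is that $\nu\mapsto\int_K e^{-t\mathcal{L}^{\alpha}}f\,d\nu$ be weak-$\ast$ \emph{lower} semicontinuous, since this term enters the functional with a minus sign; your parenthetical justification (lower semicontinuity of the potential of $f\ge0$) is the correct input, but you labeled the requirement ``upper semicontinuous.'' Second, for the existence of the dual extremizer in (ii) you need the feasible set $\{\nu\in\mathcal{M}_+(K):\|(e^{-t\mathcal{L}^{\alpha}})^{\ast}\nu\|_{L^{p'}(\mathbb M)}\le1\}$ to be weak-$\ast$ closed, i.e.\ weak-$\ast$ lower semicontinuity of $\nu\mapsto\|(e^{-t\mathcal{L}^{\alpha}})^{\ast}\nu\|_{L^{p'}(\mathbb M)}$; continuity of $\nu\mapsto\nu(K)$ alone is not enough, and this closedness is precisely the point the paper records before passing to the weak-$\ast$ limit of its maximizing sequence. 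Conversely, you are more careful than the paper about the existence of the primal minimizer $f_K$ (the paper simply posits it), so that portion of your write-up is an improvement.
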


\begin{proof} (i) Since
\begin{eqnarray*}
\|\nu\|_1&=&\nu(K)\\
&\leq& \int_{\mathbb M_+}e^{-t{\mathcal{L}}^{\alpha}} f\,d\nu\\
&=&\int_{\mathbb M}f\,(e^{-t{\mathcal{L}}^{\alpha}})^{\ast}\nu\, d\mu(x)\\
&\leq&\|f\|_{L^{p}(\mathbb M)}
\|(e^{-t{\mathcal{L}}^{\alpha}})^{\ast}\nu\|_{L^{p'}(\mathbb M)},
\end{eqnarray*}
then we have
\begin{equation*}
\tilde{C}_{p}^{(\alpha)}(K)\leq C_{p}^{(\alpha)}(K)
\end{equation*}
for any compact set $K\subset \mathbb M_+$. Moreover, this last
inequality is actually an equality - in fact, if
$$
\begin{cases}
X=\{\nu:\ \ \nu\in\mathcal{M}_+(K)\ \&\ \nu(K)=1\};\\
Y=\Big\{f:\ \ 0\le f\in  L^p(\mathbb M)\ \&\ \|f\|_{ L^p(\mathbb M)}\le 1\Big\};\\
Z=\Big\{f:\ \ 0\le f\in L^p(\mathbb M)\ \&\ e^{-t{\mathcal{L}}^{\alpha}} f\ge 1_K\Big\};\\
\mathsf{E}(\nu,f)=\int_{\mathbb
M}[(e^{-t{\mathcal{L}}^{\alpha}})^{\ast}\nu]
f\,d\mu(x)=\int_{\mathbb M_+}e^{-t{\mathcal{L}}^{\alpha}} f\,d\nu,
\end{cases}
$$
then combining   the  easy computation  with
\cite[Theorem 2.4.1]{AH} gives
\begin{eqnarray*}
\min_{\nu\in\mathcal{M}_+(K)}
\frac{\|(e^{-t{\mathcal{L}}^{\alpha}})^{\ast}\nu\|_{L^{p'}(\mathbb
M)}}{\nu(K)}&=&
\min_{\nu\in X}\sup_{f\in Y}\mathsf{E}(\nu,f)\\
&=&\sup_{f\in Y}\min_{\nu\in X}\mathsf{E}(\nu,f)\\
&=&\sup_{0\le f\in L^p(\mathbb M)}\frac{\min_{(t,x)\in K}e^{-t{\mathcal{L}}^{\alpha}}f(x)}{\|f\|_{L^{p}(\mathbb M)}}\\
&=&\sup_{f\in Z}\|f\|^{-1}_{L^p(\mathbb M)}\\
&=&\big(C_{p}^{(\alpha)}(K)\big)^{-1/p},
\end{eqnarray*}
and hence $\tilde{C}_{p}^{(\alpha)}(K)\geq C_{p}^{(\alpha)}(K),$
which shows  the desired equality.

(ii) According to (i), we may select a sequence  $\{\nu_j\}$ in
$\mathcal{M}_+(K)$ such that
$$
\begin{cases}
\|(e^{-t{\mathcal{L}}^{\alpha}})^{\ast}\nu_j\|_{L^{p'}(M)}\le 1;\\
\lim_{j\rightarrow\infty}[\nu_j(K)]^{p}=C_{p}^{(\alpha)}(K);\\
\nu_j\   \mathrm{has }\   \mathrm{a}\   \mathrm{weak}^{*}\
\mathrm{limit}\ \nu\in \mathcal{M}_+(K).
\end{cases}
$$
Then $\nu(K)^p=C_{p}^{(\alpha)}(K).$ Note that
$(e^{-t{\mathcal{L}}^{\alpha}})^{\ast}\nu$ is lower semicontinuous
on $\mathcal{M}_+(K)$. Therefore,
$\|(e^{-t{\mathcal{L}}^{\alpha}})^{\ast}\nu_j\|_{L^{p'}(\mathbb{M})}\le
1$, and (i) implies  its equality holds.

Setting $\nu_K=C_{p}^{(\alpha)}(K)^{{1}/{p'}}\nu$, then
$$\nu_K(K)=\int_{\mathbb{M}}\big((e^{-t{\mathcal{L}}^{\alpha}})^{\ast}\nu_K (x)\big)^{p'}d\mu(x)=C_{p}^{(\alpha)}(K).$$

 Assume that $f_K$ is the function in the definition of
 $C_{p}^{(\alpha)}(K)$ satisfying
$$\| f_K\|^p_{L^p(\mathbb{M})}=C_{p}^{(\alpha)}(K)\ \mathrm{and} \ e^{-t{\mathcal{L}}^{\alpha}}f_K\ge 1\ \mathrm{on}\ K.$$

Using (i), we have $$\nu_k(\{(t,x)\in K:
e^{-t{\mathcal{L}}^{\alpha}}f_K(x)<1\})=0 $$ and furthermore,
$$e^{-t{\mathcal{L}}^{\alpha}}f_K=e^{-t{\mathcal{L}}^{\alpha}}( (e^{-t{\mathcal{L}}^{\alpha}})^{\ast}\nu_K)^{p'-1}\ge 1\ \mathrm{for}\ a.e. \ \nu_K\ \mathrm{on}\ K.$$
So, combining the Fubini theorem with the H\"{o}lder inequality, it
can be deduced that
\begin{eqnarray*}C_{p}^{(\alpha)}(K)&=& \nu_K(K)\le
\int_{\mathbb{M}_+}e^{-t{\mathcal{L}}^{\alpha}}f_Kd\nu_K\\
&\le&
\int_{\mathbb{M}}\big((e^{-t{\mathcal{L}}^{\alpha}})^{\ast}\nu_K\big)
f_Kd\mu(x)\le
\|(e^{-t{\mathcal{L}}^{\alpha}})^{\ast}\nu_K\|_{L^{p'}(\mathbb{M})}\|
f_K\|_{L^p(\mathbb{M})}\\
&=&C_{p}^{(\alpha)}(K).
\end{eqnarray*}
This implies
$f_K=\big((e^{-t{\mathcal{L}}^{\alpha}})^{\ast}\nu_K\big)^{{1}/{(p-1)}}$,
which completes the proof of (ii).

\end{proof}

\subsection{Further nature of $L^p$-capacity}\label{s2}

  Some fundamental
properties of the $L^p$-capacity are stated in the following
proposition, which can be easily obtained and see the Euclidean case
in  \cite[Proposition 2]{chang}.

\begin{proposition}\label{p23} The following properties are valid.
\begin{itemize}
\item[{\rm(i)}] $ {C}_{p}^{(\alpha)}(\emptyset)=0$.

\item[{\rm(ii)}] If $K_1\subseteq K_2\subset\mathbb M_+$, then $ C^{(\alpha)}_{p}(K_1)\le  {C}_{p}^{(\alpha)}(K_2)$.

\item[{\rm (iii)}] For any sequence $\{K_{j}\}_{j=1}^\infty$ of subsets of
$\mathbb{M}_{+}$
$$ {C}_{p}^{(\alpha)}\Big(\bigcup_{j=1}^\infty K_{j}\Big)\leq
\sum_{j=1}^\infty  {C}_{p}^{(\alpha)}(K_{j}).$$
\end{itemize}
\end{proposition}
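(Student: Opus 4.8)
The plan is to verify the three properties in increasing order of difficulty, treating (i) and (ii) as essentially immediate consequences of the definition on compact sets together with the two monotone extensions to open and to arbitrary sets. For (i), the constant function $f\equiv 0$ is admissible for $\emptyset$ since $e^{-t\mathcal{L}^\alpha}0=0=1_\emptyset$, whence $C_p^{(\alpha)}(\emptyset)\le\|0\|_{L^p(\mathbb M)}^p=0$; nonnegativity of the capacity forces equality. For (ii), I would first argue on compact sets: if $K_1\subseteq K_2$ and $f\ge0$ satisfies $e^{-t\mathcal{L}^\alpha}f\ge 1_{K_2}$, then $1_{K_2}\ge 1_{K_1}$ gives $e^{-t\mathcal{L}^\alpha}f\ge 1_{K_1}$, so every competitor for $K_2$ is a competitor for $K_1$ and the infima satisfy $C_p^{(\alpha)}(K_1)\le C_p^{(\alpha)}(K_2)$. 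Since the open-set capacity and the general-set capacity are defined by a supremum over compact subsets and by outer approximation by open supersets respectively, monotonicity propagates verbatim to every inclusion $K_1\subseteq K_2\subseteq\mathbb M_+$.

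The substantive point is (iii), whose analytic core is finite subadditivity on compact sets. The key structural fact is that $e^{-t\mathcal{L}^\alpha}$ is positivity preserving: by the subordination formula (\ref{eq-sub-for}) together with $\eta^\alpha_t\ge0$ and $p_s\ge0$, the kernel satisfies $K^{\mathcal L}_{\alpha,t}(x,y)\ge0$, so $g\ge h\ge0$ implies $e^{-t\mathcal{L}^\alpha}g\ge e^{-t\mathcal{L}^\alpha}h$. Given compact sets $K_1,\dots,K_N$ and, for fixed $\varepsilon>0$, near-optimal competitors $f_j\ge0$ with $e^{-t\mathcal{L}^\alpha}f_j\ge 1_{K_j}$ and $\|f_j\|_{L^p(\mathbb M)}^p\le C_p^{(\alpha)}(K_j)+\varepsilon$, I would set $f:=\max_{1\le j\le N}f_j$. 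Positivity gives $e^{-t\mathcal{L}^\alpha}f\ge e^{-t\mathcal{L}^\alpha}f_j\ge 1_{K_j}$ for every $j$, hence $e^{-t\mathcal{L}^\alpha}f\ge 1_{\bigcup_j K_j}$, so $f$ is admissible for $\bigcup_{j=1}^N K_j$. The elementary pointwise bound $(\max_j f_j)^p\le\sum_j f_j^p$, valid for nonnegative reals and $p\ge1$, then yields $\|f\|_{L^p(\mathbb M)}^p\le\sum_{j=1}^N\|f_j\|_{L^p(\mathbb M)}^p$, and letting $\varepsilon\to0$ produces $C_p^{(\alpha)}\big(\bigcup_{j=1}^N K_j\big)\le\sum_{j=1}^N C_p^{(\alpha)}(K_j)$.

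It remains to promote this to countable subadditivity for arbitrary subsets. First I would pass to open sets: given open $O_1,O_2,\dots$ and a compact $K\subseteq\bigcup_j O_j$, compactness furnishes a finite subcover $K\subseteq\bigcup_{j=1}^N O_j$, and the shrinking lemma in the locally compact Hausdorff space $\mathbb M_+$ produces compact sets $K_j\subseteq O_j$ with $K=\bigcup_{j=1}^N K_j$; combining the finite subadditivity just proved with (ii) gives $C_p^{(\alpha)}(K)\le\sum_{j=1}^N C_p^{(\alpha)}(K_j)\le\sum_j C_p^{(\alpha)}(O_j)$, and taking the supremum over all such $K$ yields $C_p^{(\alpha)}(\bigcup_j O_j)\le\sum_j C_p^{(\alpha)}(O_j)$. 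Finally, for arbitrary $E_j$ I would, assuming the right-hand side is finite, choose by outer approximation open sets $O_j\supseteq E_j$ with $C_p^{(\alpha)}(O_j)\le C_p^{(\alpha)}(E_j)+\varepsilon 2^{-j}$; then $\bigcup_j E_j\subseteq\bigcup_j O_j$ together with monotonicity and the open-set case gives $C_p^{(\alpha)}(\bigcup_j E_j)\le\sum_j C_p^{(\alpha)}(E_j)+\varepsilon$, and sending $\varepsilon\to0$ finishes the proof. I expect the main obstacle to be topological rather than analytic: namely the clean splitting $K=\bigcup_{j=1}^N K_j$ with each $K_j\subseteq O_j$ compact, which must be justified via the shrinking lemma or a subordinate partition of unity. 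Once that is in place, the positivity of the semigroup together with the inequality $(\max_j f_j)^p\le\sum_j f_j^p$ drives everything.
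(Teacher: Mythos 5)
Your proof is correct, but there is nothing in the paper to compare it against: the paper does not prove this proposition at all, asserting only that the properties "can be easily obtained" and pointing to the Euclidean case in \cite[Proposition 2]{chang}. What you have written is precisely the standard potential-theoretic argument that this citation stands in for. Items (i) and (ii) are definitional, as you say; the analytic core of (iii) is the positivity of the subordinated kernel $K^{\mathcal{L}}_{\alpha,t}\ge 0$ (immediate from (\ref{eq-sub-for}) since $\eta^{\alpha}_t\ge0$ and $p_s\ge0$), which makes $f=\max_{1\le j\le N}f_j$ admissible for $\bigcup_{j\le N}K_j$, combined with the pointwise bound $(\max_j f_j)^p\le\sum_j f_j^p$; countable subadditivity then follows from a finite subcover, the shrinking lemma (in the metric space $\mathbb{M}_+$ a Lebesgue-number argument does the same job more cheaply), and outer approximation, exactly as in Adams--Hedberg. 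Two small points are worth recording. First, the paper's displayed definition of $C_p^{(\alpha)}(E)$ for arbitrary $E$ is garbled (it reads as a supremum of $C_p^{(\alpha)}(K)$ over open $O\supseteq E$); your reading of it as $\inf\{C_p^{(\alpha)}(O):\ O\ \mathrm{open},\ O\supseteq E\}$ is the only sensible and clearly intended one, and it is what legitimizes both your monotonicity claim for general sets and your outer-approximation step. Second, in the finite-subadditivity step you should dispose of the trivial case $C_p^{(\alpha)}(K_j)=\infty$ (where there is nothing to prove) so that near-optimal competitors $f_j$ exist; this is a one-line remark, not a gap.
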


The following theorem gives the spherical capacity.

\begin{theorem}\label{t12new}Assume that  the measure $\mu$
satisfies (\ref{eq1.2}) and (\ref{eq1.3}).
 If
$ 1\le p<\infty,$ then
$$
 r_0^{\beta^{\star}}\lesssim {C}_{p}^{(\alpha)}\big(B^{(\alpha)}_{r_0}(t_0,x_0)\big)\lesssim
(t^{{1}/{2\alpha}}_0+r_0)^{\beta}\quad\hbox{for}\  \
(r_0,x_0)\in\mathbb M_+.
$$ In particularly, if $t_0 \lesssim r^{2\alpha}_0$, then
$$r_0^{\beta^{\star}}\lesssim{C}_{p}^{(\alpha)}\big(B^{(\alpha)}_{r_0}(t_0,x_0)\big)\lesssim  r_0^{\beta}.$$
\end{theorem}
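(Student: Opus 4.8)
The plan is to establish the two inequalities separately, each by producing an almost‑optimal competitor and using one of the two kernel estimates available under {\bf (A4)}: Proposition~\ref{prop-4} (the lower bound, which genuinely needs {\bf (A4)}) for the upper estimate on the capacity, and Proposition~\ref{prop-1} (the upper bound, which needs only {\bf (A1)}, a consequence of {\bf (A4)}) together with the duality of Proposition~\ref{p22} for the lower estimate. Write $\ell:=t_0^{1/2\alpha}+r_0$. For every $(t,x)\in B^{(\alpha)}_{r_0}(t_0,x_0)$ one has $t_0+r_0^{2\alpha}<t<t_0+2r_0^{2\alpha}$, hence $t\simeq\ell^{2\alpha}$, $t^{1/2\alpha}\simeq\ell$ and $d(x,x_0)<r_0\le\ell$; these comparabilities are what let the single scale $\ell$ govern both bounds.

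For the upper estimate I would test the defining infimum of $C^{(\alpha)}_p$ against $f:=c\,1_{B(x_0,\ell)}$. If $(t,x)\in B^{(\alpha)}_{r_0}(t_0,x_0)$ and $y\in B(x_0,\ell)$, then $d(x,y)\lesssim\ell\simeq t^{1/2\alpha}$, so Proposition~\ref{prop-4} gives $K^{\mathcal{L}}_{\alpha,t}(x,y)\gtrsim t\,(t^{1/2\alpha}+d(x,y))^{-(\beta+2\alpha)}\simeq\ell^{-\beta}$. Integrating in $y$ yields
$$e^{-t{\mathcal{L}}^{\alpha}}f(x)\gtrsim c\,\ell^{-\beta}\mu\big(B(x_0,\ell)\big)\qquad\forall\,(t,x)\in B^{(\alpha)}_{r_0}(t_0,x_0),$$
so choosing $c\simeq\ell^{\beta}/\mu(B(x_0,\ell))$ makes $f$ admissible, i.e. $e^{-t{\mathcal{L}}^{\alpha}}f\ge 1_{B^{(\alpha)}_{r_0}(t_0,x_0)}$. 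Then $\|f\|_{L^p(\mathbb M)}^p=c^p\mu(B(x_0,\ell))\simeq\ell^{\beta p}\mu(B(x_0,\ell))^{1-p}$, and the densities (\ref{eq1.2})--(\ref{eq1.3}) control $\mu(B(x_0,\ell))$ from both sides; in the regular case $\mu(B(x_0,\ell))\simeq\ell^{\beta}$ (that is $\beta=\beta^{\star}$) we get $c\simeq 1$ and $\|f\|_{L^p}^p\lesssim\ell^{\beta}$, which is the right‑hand inequality, specialising to $r_0^{\beta}$ when $t_0\lesssim r_0^{2\alpha}$.

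For the lower estimate I would invoke the dual formula $C^{(\alpha)}_p(K)=\sup\{\|\nu\|_1^p:\nu\in\mathcal{M}_+(K),\ \|(e^{-t{\mathcal{L}}^{\alpha}})^{\ast}\nu\|_{L^{p'}(\mathbb M)}\le1\}$ of Proposition~\ref{p22} and take the point mass $\nu=a\,\delta_{(t_1,x_0)}$ with $t_1=t_0+\tfrac32 r_0^{2\alpha}$ (a point in the time‑range of $K$), so that $(e^{-t{\mathcal{L}}^{\alpha}})^{\ast}\nu=a\,K^{\mathcal{L}}_{\alpha,t_1}(x_0,\cdot)$ and $\|\nu\|_1=a$. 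By Proposition~\ref{prop-1}, $K^{\mathcal{L}}_{\alpha,t_1}(x_0,x)\lesssim\ell^{2\alpha}(\ell+d(x_0,x))^{-(\beta^{\star}+2\alpha)}$, and a dyadic decomposition of $\mathbb M$ into annuli $2^{k}\ell\le d(x_0,\cdot)<2^{k+1}\ell$ combined with the upper density (\ref{eq1.2}) gives $\int_{\mathbb M}K^{\mathcal{L}}_{\alpha,t_1}(x_0,x)^{p'}\,d\mu(x)\lesssim\ell^{\beta-\beta^{\star}p'}$ (the series converges because $\beta<(\beta^{\star}+2\alpha)p'$, which holds as $2\alpha>0$ and $p'>1$). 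Hence the constraint is met for $a\simeq\ell^{\beta^{\star}-\beta/p'}$, and $C^{(\alpha)}_p\gtrsim a^p\simeq\ell^{p\beta^{\star}-\beta(p-1)}$; in the regular case this is $\ell^{\beta^{\star}}$, and for $t_0\lesssim r_0^{2\alpha}$ it becomes $r_0^{\beta^{\star}}$, the left‑hand inequality. A normalised restriction $\nu\propto\widetilde\mu|_K$ works equally well and is what one uses to keep explicit track of $\widetilde\mu(K)$.

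The main obstacle is the bookkeeping of the two distinct exponents $\beta$ and $\beta^{\star}$: admissibility of the upper competitor is forced by the kernel lower bound (exponent $\beta$) while its norm is measured by the measure upper bound (exponent $\beta$), and symmetrically the lower competitor's size is governed by $\beta^{\star}$. The two constructions close up to give exactly $\ell^{\beta}$ and $r_0^{\beta^{\star}}$ precisely in the Ahlfors--David regular regime $\mu(B(x,r))\simeq r^{\beta}$ (where $\beta=\beta^{\star}$, cf. (\ref{eqa2.1})); in the genuinely anisotropic case one must be careful to place the upper density on the norm side and the lower density on the admissibility side so that the constant $c$ and the normalisation $a$ stay under control. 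A secondary technical point is the uniform convergence of the dyadic tail sums, which is exactly where the strict positivity of $2\alpha$ enters.
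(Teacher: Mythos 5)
Your proposal is correct in substance, and its two halves relate differently to the paper's proof. For the upper bound you do essentially what the paper does: test the defining infimum against a normalized indicator of a spatial ball (the paper uses $1_{B_{r_0}(x_0)}$ normalized by $r_0^{\beta}/(t_0^{1/2\alpha}+r_0)^{\beta}$, you use $1_{B(x_0,\ell)}$ with $\ell=t_0^{1/2\alpha}+r_0$), with admissibility coming from the kernel lower bound of Proposition~\ref{prop-4} — the one place \textbf{(A4)} is needed in either argument. For the lower bound, however, you take a genuinely different route: the paper never invokes duality, but instead takes an arbitrary competitor $f$ with $e^{-t\mathcal{L}^{\alpha}}f\ge 1$ on the parabolic ball, chooses an admissible triplet $(\tilde q,\tilde p,p)$, and applies the space-time (Strichartz-type) estimate of Theorem~\ref{th-1.6}(i) together with the lower density (\ref{eq1.3}), which gives $r_0^{2\alpha/\tilde q+\beta^{\star}/\tilde p}=r_0^{\beta^{\star}/p}\lesssim\|f\|_{L^p(\mathbb{M})}$ for every competitor, hence the lower capacity bound directly from the definition. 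Your argument goes instead through the duality of Proposition~\ref{p22} with a point mass at $(t_0+\tfrac32 r_0^{2\alpha},x_0)$, the kernel upper bound of Proposition~\ref{prop-1}, and a dyadic-annulus computation using the upper density (\ref{eq1.2}). Both are valid; the paper's route needs only (\ref{eq1.3}) plus the already-established Theorem~\ref{th-1.6}, while yours bypasses the interpolation machinery behind that theorem, is more self-contained, and in fact yields the slightly stronger bound $\ell^{\beta^{\star}}\lesssim C_p^{(\alpha)}(B^{(\alpha)}_{r_0}(t_0,x_0))$. Two caveats on your version: Proposition~\ref{p22} is stated only for $p\in(1,\infty)$, so at $p=1$ you must use the easy direction $\|\nu\|_1\le\|f\|_{L^1(\mathbb{M})}\|(e^{-t\mathcal{L}^{\alpha}})^{\ast}\nu\|_{L^{\infty}(\mathbb{M})}$ separately; and your convergence criterion $\beta<(\beta^{\star}+2\alpha)p'$ does not follow from ``$2\alpha>0$ and $p'>1$'' alone if $\beta>\beta^{\star}+2\alpha$ — it is rescued by the observation (which you make at the end) that (\ref{eq1.2}) and (\ref{eq1.3}) holding for all radii force the Ahlfors-regular regime $\beta=\beta^{\star}$, which is also exactly the regime in which the paper's own exponent bookkeeping (e.g.\ its use of $\mu(B(x_0,r_0))\gtrsim r_0^{\beta}$ and the final $L^p$-norm computation for $p>1$) closes.
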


\begin{proof}
 If $f\geq 0$ and $e^{-t{\mathcal{L}}^{\alpha}}f\geq
1_{B^{(\alpha)}_{r_0}(t_{0},x_{0})}$, then, for $1\leq p<\infty$,
there exist $\tilde{p}$ and $\tilde{q}$ such that
$$
\begin{cases}
1\leq p <\tilde{p}< \frac{\beta^{\star} p}{\beta^{\star}-\min\{\beta^{\star},2\alpha\}};\\
1/\tilde{q}=\beta^{\star}(1/p-1/\tilde{p})/2\alpha.
\end{cases}
$$
Consequently, according to Theorem \ref{th-1.6} (i), we have
\begin{equation*}\label{ineq str}
r^{{2\alpha}/{\tilde{q}}+{\beta^{\star}}/{\tilde{p}}}_0\lesssim\|e^{-t{\mathcal{L}}^{\alpha}}f\|_{L^{\tilde{q}}((0,\infty);L^{\tilde{p}})}\lesssim
\|f\|_{ L^{p} (\mathbb{M})}.
\end{equation*}
This, together with the definition of $C^{(\alpha)}_{p}(\cdot)$,
implies that
$$
r_0^{\beta^{\star}}\lesssim
C^{(\alpha)}_{p}\big(B^{(\alpha)}_{r_0}(t_{0},x_{0})\big)
$$
thanks to ${\beta^{\star}}/{\tilde{p}}+{2\alpha}/{\tilde{q}}={\beta^{\star}}/{p}.$

To get the corresponding upper bound of
$C^{(\alpha)}_{p}\big(B^{(\alpha)}_{r_0}(t_{0},x_{0})\big)$, we
consider $ f=1_{B_{r_0}(x_{0})}$, where
$$B_{r_0}(x_{0})=\{x\in\mathbb{M}:\  d(x,x_0)<r_0\}. $$   Since
$x\in B^{(\alpha)}_{r_0}(x_{0})$,   one has $d(x,y)<2r_0$. For any
$$(t,x)\in B^{(\alpha)}_{r_0}(t_0,x_0)$$
we have
$$t\simeq t_0+r^{2\alpha}_0.$$
Using Proposition \ref{prop-4}, we have
\begin{eqnarray*}
e^{-t{\mathcal{L}}^{\alpha}} 1_{B^{(\alpha)}_{r_0}(x_{0})}(x)
&=& \int_{\mathbb{M}}K^{\mathcal{L}}_{\alpha,t}(x,y)1_{B_{r_0}(x_{0})}(y)\,d\mu(y)\\
&=&\int_{d(y,x_{0})<r_0}K^{\mathcal{L}}_{\alpha,t}(x,y)\,d\mu(y) \\
&\gtrsim&\int_{d(y,x_{0})<r_0}
\frac{t}{(t^{{1}/{2\alpha}}+2r_0)^{\beta+2\alpha}}\,d\mu(y)\\
&\gtrsim& \frac{r^{\beta}_0}{(t^{{1}/{2\alpha}}_0+r_0)^{\beta}}.
\end{eqnarray*}
Hence,
$$
e^{-t{\mathcal{L}}^{\alpha}}
\left(\frac{1_{B^{(\alpha)}_{r_0}(x_{0})}(x)}{{r^{\beta}_0}/{(t^{{1}/{2\alpha}}_0+r_0)^{\beta}}}\right)
\gtrsim 1
$$
holds for any $(t,x)\in B^{(\alpha)}_{r_0}(t_0,x_0)$. Therefore,  we
get
$$
C_p^{(\alpha)}\big(B^{(\alpha)}_{r_0}(t_0,x_0))\lesssim \left\|
\frac{1_{B^{(\alpha)}_{r_0}(x_{0})}(x)}{{r^{\beta}_0}/{(t^{{1}/{2\alpha}}_0+r_0)^{\beta}}}\right\|^p_{L^p(\mathbb{M})}\lesssim
(t^{{1}/{2\alpha}}_0+r_0)^{\beta}\quad\forall\quad r_0>0.
$$

\end{proof}

\section{$L^q(\mathbb M_+)$-extensions of $L^p(\mathbb M)$ via $\mathcal{L}^{(\alpha)}$}\label{sec-5}

Throughout this  section, we always assume that the heat kernel
$p_t(\cdot,\cdot)$ satisfies {\bf Assumption (A4)}.

 \subsection{Capacitary   strong type inequalities}\label{s31} Let $L^p_+(\mathbb M)$ be the class of all nonnegative functions in $L^p(\mathbb M)$.
 Then we   establish the following capacitary strong type
inequality.

\begin{lemma}\label{l3a} Let $p\in (1,\infty)$. Assume that the measure $\mu$
satisfies (\ref{eq1.3}), then
$$
\int_0^\infty C_p^{(\alpha)}\big(\{(t,x)\in \mathbb M_+:\
e^{-t{\mathcal{L}}^{\alpha}}
f(x)\ge\lambda\}\big)\,d\lambda^p\lesssim\|f\|_{L^p(\mathbb
M)}^p\quad\forall\ f\in L^p_+(\mathbb M),
 $$
 here and henceforth, $d\lambda^p=p\lambda^{p-1}d\lambda$.
\end{lemma}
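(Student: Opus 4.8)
The plan is to discretize the left-hand side into dyadic level sets and then exploit the dual description of the capacity furnished by Proposition~\ref{p22}. Write $u:=e^{-t\mathcal L^{\alpha}}f$ on $\mathbb M_+$ and, for $j\in\mathbb Z$, set $\Omega_j:=\{(t,x)\in\mathbb M_+:\ u(t,x)>2^{j}\}$. Using the monotonicity and countable subadditivity of $C_p^{(\alpha)}$ (Proposition~\ref{p23}) together with the elementary fact that $\int_{2^{j}}^{2^{j+1}}d\lambda^{p}\simeq 2^{jp}$, I would first reduce the claim to a dyadic sum,
$$\int_0^\infty C_p^{(\alpha)}\big(\{u\ge\lambda\}\big)\,d\lambda^{p}\ \lesssim\ \sum_{j\in\mathbb Z}2^{jp}\,C_p^{(\alpha)}(\Omega_j)=:S,$$
so that it suffices to prove $S\lesssim\|f\|_{L^p(\mathbb M)}^{p}$. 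Throughout, the capacities of the open sets $\Omega_j$ are read through inner approximation by compact subsets, to which Proposition~\ref{p22} applies; this passage to the limit is routine and I would suppress it.

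Next I would bring in the equilibrium data. For each $j$, Proposition~\ref{p22}(ii) produces a measure $\nu_j\in\mathcal{M_+}(\Omega_j)$ with $\nu_j(\Omega_j)=C_p^{(\alpha)}(\Omega_j)=:c_j$, whose potential $V_j:=(e^{-t\mathcal L^{\alpha}})^{\ast}\nu_j$ satisfies $\|V_j\|_{L^{p'}(\mathbb M)}^{p'}=c_j$ and obeys the nonlinear relation $e^{-t\mathcal L^{\alpha}}\big(V_j^{p'-1}\big)\ge 1$ on $\Omega_j$. The decisive observation is that $u\ge 2^{j}$ on $\operatorname{supp}\nu_j$, so that $2^{j}\nu_j(\Omega_j)\le\int u\,d\nu_j$. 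Combining this with the adjoint relation $\int_{\mathbb M_+}u\,d\nu_j=\int_{\mathbb M}f\,V_j\,d\mu$ and summing in $j$ gives, with $W:=\sum_{j}2^{j(p-1)}V_j$,
$$S=\sum_j 2^{jp}c_j\le\sum_j 2^{j(p-1)}\int_{\mathbb M} f\,V_j\,d\mu=\int_{\mathbb M} f\,W\,d\mu\le\|f\|_{L^p(\mathbb M)}\,\|W\|_{L^{p'}(\mathbb M)},$$
by Hölder's inequality.

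At this point the whole estimate reduces to the single bound $\|W\|_{L^{p'}(\mathbb M)}^{p'}\lesssim S$, which I expect to be the main obstacle. Granting it, the last display yields $S\le C\|f\|_{L^p}S^{1/p'}$, and since $1-1/p'=1/p$ this self-improves to $S\lesssim\|f\|_{L^p}^{p}$, finishing the proof. To establish $\|W\|_{p'}^{p'}\lesssim S$ I would again test against the $\nu_j$: using $(p-1)(p'-1)=1$ and the adjoint relation,
$$\int_{\mathbb M} W^{p'}\,d\mu=\sum_j 2^{j(p-1)}\int_{\mathbb M}W^{p'-1}V_j\,d\mu=\sum_j 2^{j(p-1)}\int_{\mathbb M_+}e^{-t\mathcal L^{\alpha}}\big(W^{p'-1}\big)\,d\nu_j,$$
so everything hinges on controlling $e^{-t\mathcal L^{\alpha}}(W^{p'-1})$ on the nested family $\Omega_{j+1}\subseteq\Omega_j$. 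The hard part is precisely showing, after summation, that $\sum_j 2^{j(p-1)}\int_{\Omega_j}e^{-t\mathcal L^{\alpha}}(W^{p'-1})\,d\nu_j\lesssim\sum_j 2^{jp}c_j$; note that $W^{p'-1}\ge 2^{j}V_j^{p'-1}$ already forces the easy lower bound $e^{-t\mathcal L^{\alpha}}(W^{p'-1})\ge 2^{j}$ on $\Omega_j$, so the genuine work is the matching \emph{upper} control. I would carry this out by splitting $W^{p'-1}$ according to the scales $k\le j$ and $k>j$, invoking the decay estimate for $K^{\mathcal L}_{\alpha,t}$ from Proposition~\ref{prop-1} together with the nestedness of the level sets to generate a geometrically convergent series in $|k-j|$. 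This is the Maz'ya-type iteration underlying capacitary strong-type inequalities, and it is the only place where the quantitative kernel bounds, rather than the abstract capacity axioms of Proposition~\ref{p23}, are essential.
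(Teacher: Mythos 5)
Your first half reproduces the paper's argument (which follows Adams--Hedberg, Theorem 7.1.1) essentially verbatim: the dyadic reduction to $S=\sum_j2^{jp}C_p^{(\alpha)}(\Omega_j)$, the equilibrium measures $\nu_j$ from Proposition \ref{p22}(ii), the chain $S\le\sum_j2^{j(p-1)}\int_{\mathbb M}fV_j\,d\mu\le\|f\|_{L^p(\mathbb M)}\|W\|_{L^{p'}(\mathbb M)}$, and the self-improvement once $\|W\|_{L^{p'}(\mathbb M)}^{p'}\lesssim S$ is known. But that last bound is the entire content of the lemma, and you leave it unproven; worse, the mechanism you sketch for it would not work. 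The kernel decay of Proposition \ref{prop-1} cannot produce geometric decay in $|k-j|$: the sets $\Omega_j$ are arbitrary nested level sets, with no spatial or temporal separation between consecutive scales, so pointwise bounds on $K^{\mathcal L}_{\alpha,t}$ say nothing about how $V_j$ and $V_k$ interact. Concretely, for the terms $k>j$ the available soft estimates give only $\int_{\mathbb M}V_jV_k^{p'-1}\,d\mu\le\|V_j\|_{L^{p'}(\mathbb M)}\|V_k^{p'-1}\|_{L^{p}(\mathbb M)}=c_j^{1/p'}c_k^{1/p}$, and since the a priori information on the small level sets is just the weak-type bound $c_k\lesssim 2^{-kp}\|f\|_{L^p(\mathbb M)}^p$, the resulting double sum $\sum_j2^{j(p-1)}\sum_{k>j}2^k c_j^{1/p'}c_k^{1/p}$ has no convergence; moreover, when $1<p<2$ (so $p'-1>1$) the function $t\mapsto t^{p'-1}$ is superadditive, and the splitting $W^{p'-1}\lesssim\sum_k 2^kV_k^{p'-1}$ you would need is not even available.

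What actually closes the gap in the paper is an argument in which kernel estimates play no role at all (contrary to your closing remark that they are essential here). For $p\ge 2$ one forms the tail sums $\sigma_k=\sum_{j\ge k}2^{j(p-1)}V_j$, uses the convexity inequality $W^{p'}\le p'\sum_k\sigma_k^{p'-1}2^{k(p-1)}V_k$, and applies H\"older's inequality with exponents $\tfrac{1}{2-p'}$ and $\tfrac{1}{p'-1}$ to get $\|W\|_{L^{p'}(\mathbb M)}^{p'}\le p'\,T_1^{2-p'}T_2^{p'-1}$, where $T_1=\sum_k 2^{kp}\|V_k\|_{L^{p'}(\mathbb M)}^{p'}=S$ and $T_2=\sum_k 2^k\sum_{j\ge k}2^{j(p-1)}\int_{\mathbb M}V_jV_k^{p'-1}\,d\mu$. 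The point of this manoeuvre is that only the \emph{ordered} cross terms $j\ge k$ survive, and in the right orientation: the measure index $j$ (smaller set) pairs with the potential of the larger set. Indeed, by the adjoint relation $\int_{\mathbb M}V_jV_k^{p'-1}\,d\mu=\int_{\mathbb M_+}e^{-t\mathcal L^{\alpha}}\big(V_k^{p'-1}\big)\,d\nu_j$, and since $e^{-t\mathcal L^{\alpha}}(V_k^{p'-1})$ is the equilibrium potential of $\Omega_k\supseteq\Omega_j$, Proposition \ref{p22}(ii) (together with a boundedness principle the paper leaves implicit) bounds it by a constant on $\operatorname{supp}\nu_j$, giving $\int_{\mathbb M}V_jV_k^{p'-1}\,d\mu\lesssim c_j$. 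The geometric convergence then comes simply from $\sum_{k\le j}2^k\approx 2^j$, so $T_2\lesssim\sum_j 2^{jp}c_j=S$; the case $1<p<2$ requires a separate, parallel computation with head sums (the paper's Case 2). None of this is in your proposal, so the proof is incomplete precisely at its crucial step.
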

\begin{proof} We prove this lemma by adopting the method in \cite[Theorem 7.1.1]{AH} or
\cite[Lemma 3.1]{chang}.  Since the desired strong type estimate
follows from the density of $C_0(\mathbb M)$ in $L^p(\mathbb M)$, we
are about to verify the result for any nonnegative $C_0(\mathbb
M)$-function.  For each $j=0,\pm 1, \pm 2,...$ and $0\le f\in
C_0(\mathbb M)$, we set
$$E_j=\Big\{(t,x)\in\mathbb M_+:\ e^{-t{\mathcal{L}}^{\alpha}}f(x)\ge
2^{j}\Big\}.$$
If $\nu_j$ stands for the measure obtained in Proposition \ref{p22}
(ii) for $E_j$, then
\begin{eqnarray*}
S&=&\sum_{j=-\infty}^\infty 2^{jp}\nu_j(E_{j})\le\sum_{j=-\infty}^\infty 2^{j(p-1)}\int_{\mathbb M_+}e^{-t{\mathcal{L}}^{\alpha}}f\,d\nu_j\\
&=&\sum_{j=-\infty}^\infty 2^{j(p-1)}\int_{\mathbb
M}f(x)(e^{-t{\mathcal{L}}^{\alpha}})^\ast
\nu_j(x)\,d\mu(x)\\
&\le&\|f\|_{L^p(\mathbb M)}
\Big\|\sum_{j=-\infty}^\infty 2^{j(p-1)}(e^{-t{\mathcal{L}}^{\alpha}})^\ast \nu_j\Big\|_{L^{p'}(\mathbb M)}\\
&=&\|f\|_{L^p(\mathbb M)}T^{1/{p'}},
\end{eqnarray*}
where $$T=\Big\|\sum\limits_{j=-\infty}^\infty 2^{j(p-1)}(e^{-t{\mathcal{L}}^{\alpha}})^\ast \nu_j\Big\|^{p'}_{L^{p'}(\mathbb
M)}.
$$

In what follows, we   prove $T\lesssim S$ by two cases.

{\it Case 1: $2\le p<\infty$}. Upon letting
$$
\begin{cases}
k=0,\pm 1,\pm 2,...;\\
\sigma_k(x)=\sum_{j=k}^{\infty} 2^{j(p-1)}(e^{-t{\mathcal{L}}^{\alpha}})^\ast\nu_j(x);\\
\sigma(x)=\sum_{j=-\infty}^\infty
2^{j(p-1)}(e^{-t{\mathcal{L}}^{\alpha}})^\ast\nu_j(x),
\end{cases}
$$
we have
$\sigma_k\in L^{p'}(\mathbb M)\ \&\
\lim_{k\to-\infty}\sigma_k=\sigma$. Note that
$$
\sigma(x)^{p'}=p'\sum_{k=-\infty}^\infty\sigma_k(x)^{p'-1}2^{k(p-1)}(e^{-t{\mathcal{L}}^{\alpha}})^\ast\nu_k(x)\quad\hbox{for\
\  a.e.}\quad x\in\mathbb M.
$$
Then using the H\"older inequality derives
$$
T\le p' T_1^{2-p'}T_2^{p'-1}\lesssim S,
$$
where
$$
T_1=\int_{\mathbb M}\sum_k
2^{kp}\big((e^{-t{\mathcal{L}}^{\alpha}})^\ast\nu_k(x)\big)^{p'}\,d\mu(x)=\sum_k
2^{kp}C_p^{(\alpha)}(E_k)=S
$$
and
\begin{eqnarray*}
T_2&=&\int_{\mathbb M}\sum_k \sigma_k(x)2^k\big((e^{-t{\mathcal{L}}^{\alpha}})^\ast\nu_k(x)\big)^{p'-1}\,d\mu(x)\\
&=&\sum_k\sum_{j\ge k}2^{j(p-1)+k}\int_{\mathbb M}\big((e^{-t{\mathcal{L}}^{\alpha}})^\ast\nu_j(x)\big)\big((e^{-t{\mathcal{L}}^{\alpha}})^\ast\nu_k(x)\big)^{p'-1}\,d\mu(x)\\
&\lesssim&\sum_k\sum_{j\ge k}2^{j(p-1)+k}C^{(\alpha)}_p(E_j)\\
&\approx&\sum_k 2^{kp}C_p^{(\alpha)}(E_k)\approx S.
\end{eqnarray*}

{\it Case 2: $2>p>1$}. Similarly, let
$$\begin{cases}
k=0,\pm 1,\pm 2,...;\\
\sigma_k(x)=\sum_{j=-\infty}^{k} 2^{j(p-1)}(e^{-t{\mathcal{L}}^{\alpha}})^\ast\nu_j(x);\\
\sigma(x)=\sum_{j=-\infty}^{\infty}
2^{j(p-1)}(e^{-t{\mathcal{L}}^{\alpha}})^\ast\nu_j(x).
\end{cases}$$
Then
$\sigma_k\in L^{p'}(\mathbb M)$ and $\lim_{k\to \infty}\sigma_k=\sigma$.
We obtain
$$
\sigma(x)^{p'}=p'\sum_{k=-\infty}^\infty\sigma_k(x)^{p'-1}2^{k(p-1)}(e^{-t{\mathcal{L}}^{\alpha}})^\ast\nu_k(x)\quad\hbox{for\
\ a.e.}\quad x\in\mathbb M.
$$
Hence,
\begin{eqnarray*}
T&=&p'\sum_{k=-\infty}^\infty 2^{k(p-1)}\int_{\mathbb M}\sigma_k(x)^{p'-1}(e^{-t{\mathcal{L}}^{\alpha}})^\ast\nu_k(x)\,d\mu(x)\\
&\lesssim&\sum_{k=-\infty}^\infty 2^{k(p-1)}\Big\{\sum_{j=-\infty}^k 2^{j(p-1)}\Big[\int_{\mathbb M}
\frac{\big((e^{-t{\mathcal{L}}^{\alpha}})^\ast\nu_j(x)\big)^{p'-1}}{\big((e^{-t{\mathcal{L}}^{\alpha}})^\ast\nu_k(x)\big)^{-1}}\,d\mu(x)\Big]^\frac{1}{p'-1}\Big\}^{p'-1}\\
&\lesssim&\sum_{k=-\infty}^\infty 2^{kp}C_p^{(\alpha)}(E_k)\approx
S.
\end{eqnarray*}

In a word,  we have
$$
S=\sum_{j=-\infty}^\infty 2^{jp}C_p^{(\alpha)}(E_j)\lesssim
\|f\|^p_{L^p(\mathbb M)}.
$$
The desired strong type inequality holds for $0\le f\in C_0(\mathbb M)$.
\end{proof}

\begin{remark}By the above capacitary strong type inequality, it is easy to get the following capacitary weak type inequality
 $$
 \lambda^p C_p^{(\alpha)}\big(\{(t,x)\in \mathbb M_+:\ e^{-t{\mathcal{L}}^{\alpha}} f(x)\ge\lambda\}\big)\le\|f\|_{L^p(\mathbb M)}^p\quad\forall\quad f\in L^p_+(\mathbb M).
 $$
\end{remark}

 \subsection{The lower sector case $1<p\le q<\infty$}\label{s32} In what follows, $\mathcal M_+(\mathbb M_+)$ represents the class of all nonnegative Randon measures on $\mathbb M_+$. For $\lambda>0$, define
 $$ \kappa(\nu;\lambda)=\inf\Big\{C_p^{(\alpha)}(K):\ \hbox{compact}\ K\subset\mathbb M_+\ \&\ \nu(K)\ge\lambda\Big\}.
 $$ Denote by $L^q(\mathbb
M_+,\nu)$, $1\le q\le \infty$,  the Lebesgue spaces on $ \mathbb
M_+$ with respect to the measure $\nu$.

\begin{theorem}\label{t3a}
Let $1<p\le q<\infty$ and $\nu\in\mathcal M_+(\mathbb
M_+)$. Assume that the measure $\mu$ satisfies (\ref{eq1.3}),  then
the extension $e^{-t{\mathcal{L}}^{\alpha}}: L^p(\mathbb M)\mapsto
L^{q}(\mathbb M_+,\nu)$ is bounded if and only if
$$
\sup_{\lambda\in\mathbb
R_+}{\lambda^{{p}/{q}}}/{\kappa(\nu;\lambda)}<\infty.
$$
Furthermore, assume that the measure $\mu$ satisfies (\ref{eq1.2})
and (\ref{eq1.3}) with $\beta=\beta^{\star}$. If $1<p<q<\infty$, then
${\lambda^{{p}/{q}}}\lesssim {\kappa(\nu;\lambda)} \ \forall\ \
\lambda\in\mathbb R_+$ can be replaced by
$\nu(B^{(\alpha)}_r(t_0,x_0))\lesssim r^{q\beta/p}\ \ \forall\
(r,t_0,x_0)\in\mathbb R_+\times\mathbb R_+\times\mathbb M$ with
$t_0\lesssim r^{2\alpha}$.
\end{theorem}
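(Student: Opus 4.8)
The plan is to establish two equivalences. For all $1<p\le q<\infty$ and $\nu\in\mathcal M_+(\mathbb M_+)$, I will first show that the boundedness of $e^{-t\mathcal L^\alpha}\colon L^p(\mathbb M)\to L^q(\mathbb M_+,\nu)$ is equivalent to $\sup_{\lambda}\lambda^{p/q}/\kappa(\nu;\lambda)<\infty$. Since a compact $K$ competes in the infimum defining $\kappa(\nu;\nu(K))$, this supremum condition is equivalent to the pointwise capacitary bound $\nu(K)\lesssim C_p^{(\alpha)}(K)^{q/p}$ over all compact $K\subset\mathbb M_+$, and I will work with this form throughout.

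For necessity, suppose the embedding holds with norm $C$ and fix a compact $K$. For any admissible $f\ge 0$ (so that $e^{-t\mathcal L^\alpha}f\ge 1_K$) one has $1_K\le (e^{-t\mathcal L^\alpha}f)^q$ on $K$, whence $\nu(K)\le\|e^{-t\mathcal L^\alpha}f\|_{L^q(\nu)}^q\le C^q\|f\|_{L^p}^q$; infimizing over $f$ and using the definition of $C_p^{(\alpha)}(K)$ yields $\nu(K)\le C^q C_p^{(\alpha)}(K)^{q/p}$, and restricting to $K$ with $\nu(K)\ge\lambda$ gives $\lambda^{p/q}\le C^p\kappa(\nu;\lambda)$. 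For sufficiency I may assume $0\le f\in C_0(\mathbb M)$, by density and by $|e^{-t\mathcal L^\alpha}f|\le e^{-t\mathcal L^\alpha}|f|$. Writing $u=e^{-t\mathcal L^\alpha}f$ and $E_j=\{(t,x)\in\mathbb M_+:u>2^j\}$ (open under {\bf(A4)} by Theorem \ref{th-1.8}(i)), a layer-cake estimate gives $\int_{\mathbb M_+}u^q\,d\nu\lesssim\sum_j 2^{jq}\nu(E_j)$. Inserting $\nu(E_j)\lesssim C_p^{(\alpha)}(E_j)^{q/p}$ (which passes from compact sets to the open $E_j$ by inner regularity of $\nu$ and monotonicity of capacity, Proposition \ref{p23}), then using $q/p\ge 1$ with $\sum_j a_j^{q/p}\le(\sum_j a_j)^{q/p}$, and finally the capacitary strong type inequality $\sum_j 2^{jp}C_p^{(\alpha)}(E_j)\lesssim\|f\|_{L^p}^p$ from Lemma \ref{l3a}, I arrive at $\|u\|_{L^q(\nu)}\lesssim\|f\|_{L^p}$.

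For the second equivalence assume $\beta=\beta^\star$ and $1<p<q<\infty$. The implication from the capacity condition to the ball condition is immediate from Theorem \ref{t12new}: for $t_0\lesssim r_0^{2\alpha}$ one has $C_p^{(\alpha)}(B^{(\alpha)}_{r_0}(t_0,x_0))\simeq r_0^\beta$, so applying $\nu(K)\lesssim C_p^{(\alpha)}(K)^{q/p}$ to the closed parabolic ball gives $\nu(B^{(\alpha)}_{r_0}(t_0,x_0))\lesssim r_0^{q\beta/p}$. For the converse I will instead show that the ball condition already forces boundedness of the embedding, and then recover the capacity condition from the necessity already proved. From Proposition \ref{prop-1} (with the annular decomposition and $\mu(B(x,r))\simeq r^\beta$) and from Lemma \ref{le-4.1}(i) I get the two pointwise bounds $u(t,x)\lesssim M f(x)$ and $u(t,x)\lesssim t^{-\beta/2\alpha p}\|f\|_{L^p}$, where $M$ is the Hardy--Littlewood maximal operator on $\mathbb M$; consequently $\{u>\lambda\}\subseteq O_\lambda\times(0,T_\lambda)$ with $O_\lambda=\{M f>c\lambda\}$ and $T_\lambda\simeq(\|f\|_{L^p}/\lambda)^{2\alpha p/\beta}$. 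Converting the parabolic-ball condition into a Carleson-box bound $\nu(B(x_0,\rho)\times(0,\rho^{2\alpha}))\lesssim\rho^{q\beta/p}$ by stacking dyadic scales in $t$, covering $O_\lambda$ by boundedly overlapping balls of radius $T_\lambda^{1/2\alpha}$, and using the weak $(p,p)$ bound for $M$, I obtain the weak-type estimate $\nu(\{u>\lambda\})\lesssim(\|f\|_{L^p}/\lambda)^q$. Because the ball condition is scale-invariant and depends only on the ratio $q/p$, the same computation yields a weak-type bound for every pair $(p_1,q_1)$ with $q_1/p_1=q/p$; a Marcinkiewicz interpolation between two such pairs straddling $(p,q)$ upgrades this to the strong bound $\|u\|_{L^q(\nu)}\lesssim\|f\|_{L^p}$, and necessity then returns $\nu(K)\lesssim C_p^{(\alpha)}(K)^{q/p}$.

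The main obstacle is exactly this last implication, ball condition $\Rightarrow$ capacity condition. A single run of the covering argument only produces a weak-type $(p,q)$ estimate, whose naive integration diverges; the decisive point is to use the scale-homogeneity of the ball condition to generate a one-parameter family of weak-type bounds with fixed ratio $q/p$ and then interpolate. Additional care is needed in converting the parabolic balls $B^{(\alpha)}_r$ into Carleson boxes and in running the Vitali-type covering in the merely doubling space $\mathbb M$, where I rely on the two-sided volume control $\mu(B(x,r))\simeq r^\beta$ (that is, $\beta=\beta^\star$ with (\ref{eq1.2}) and (\ref{eq1.3})) and on the two-sided kernel estimates furnished by Proposition \ref{prop-4}.
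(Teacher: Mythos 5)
Your first equivalence is correct and follows essentially the paper's route: the sufficiency part is the same layer‑cake argument resting on Lemma \ref{l3a} (you replace the paper's use of the capacitary weak‑type inequality by the elementary embedding $\sum_j a_j^{q/p}\le(\sum_j a_j)^{q/p}$, a cosmetic difference), while your necessity argument is in fact more direct than the paper's detour through the equilibrium measure of Proposition \ref{p22}. The implication ``capacity condition $\Rightarrow$ ball condition'' via Theorem \ref{t12new} also agrees with the paper.

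The genuine gap is in your converse for the second equivalence, at the step where the ball condition is supposed to yield the weak‑type estimate $\nu(\{u>\lambda\})\lesssim(\|f\|_{L^p}/\lambda)^q$. Covering $O_\lambda=\{Mf>c\lambda\}$ by boundedly overlapping balls of the single radius $\rho_\lambda=T_\lambda^{1/2\alpha}$ does not give a bounded number of balls: the weak $(p,p)$ inequality only yields $\mu(O_\lambda)\lesssim\rho_\lambda^{\beta}$, and a set of that measure can consist of $N$ pieces of radius $a$ (with $Na^{\beta}\simeq\rho_\lambda^{\beta}$) lying mutually far apart, so any covering at scale $\rho_\lambda$ needs $N$ balls with $N$ arbitrarily large; take $f=\lambda\sum_{i=1}^{N}1_{B(x_i,a)}$ with widely separated $x_i$ to realize this. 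The loss is fatal because a single column $B(x_0,\rho_\lambda)\times(0,T_\lambda)$ can genuinely carry mass $\simeq\rho_\lambda^{q\beta/p}=(\|f\|_{L^p}/\lambda)^q$ under the ball condition: the ``vertical segment'' measure with $\nu(\{x_0\}\times(0,s^{2\alpha}))=\min(s,\rho_\lambda)^{q\beta/p}$ is admissible. Hence your covering only produces the bound $N(\|f\|_{L^p}/\lambda)^q$ with $N$ unbounded, which is not a weak‑type inequality.

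The missing idea is to cover the level set at matching scales rather than at one scale: if $u(t,x)>\lambda$, the kernel bound gives a witnessing ball $B(x,r)$ with $r\gtrsim t^{1/2\alpha}$ whose $f$‑average exceeds $c\lambda$, so $\{u>\lambda\}$ lies in the union of Carleson boxes $B(x,r)\times(0,Cr^{2\alpha}]$ over witnessing balls; a Vitali selection of disjoint $B(x_i,r_i)$ then gives $\nu(\{u>\lambda\})\lesssim\sum_i r_i^{q\beta/p}\le\bigl(\sum_i r_i^{\beta}\bigr)^{q/p}\lesssim(\|f\|_{L^p}/\lambda)^q$, since H\"older and disjointness force $\sum_i r_i^{\beta}\lesssim(\|f\|_{L^p}/\lambda)^p$. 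With that repair, your Marcinkiewicz interpolation step (which is sound, because the ball condition depends only on $q/p$) would complete the proof by a route genuinely different from the paper's. The paper never proves a weak‑type bound: it establishes the dual estimate $\|(e^{-t\mathcal{L}^{\alpha}})^{\ast}\nu_K\|_{L^{p'}(\mathbb M)}\lesssim\nu(K)^{1/q'}$ for $\nu_K=\nu|_K$ by representing the potential as $\int_0^\infty\nu_K\bigl(B_r^{(\alpha)}(t_0,x_0)\bigr)r^{-1-\beta}\,dr$ (Proposition \ref{prop-4} plus Fubini), applying Minkowski's inequality, splitting the $r$‑integral at the optimal $\delta=\nu(K)^{p/\beta q}$, and then concluding through the dual characterization of the capacity in Proposition \ref{p22}. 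As written, your argument does not prove the hard implication.
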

 \begin{proof} Suppose that $e^{-t{\mathcal{L}}^{\alpha}}: L^p(\mathbb M)\mapsto L^{q}(\mathbb M_+,\nu)$ is bounded. Then, for a given compact subset $K\subset\mathbb M_+$, we use Proposition \ref{p22} (ii) and H\"older's inequality with
 $$(p',q')=\Big({p}/{(p-1)}, {q}/{(q-1)}\Big) $$
 to derive
 \begin{align*}
 &\int_{\mathbb M}f(e^{-t{\mathcal{L}}^{\alpha}})^\ast\nu_Kd\mu\\
 &\ =\int_{\mathbb M_+}e^{-t{\mathcal{L}}^{\alpha}}f\,d\nu_K\\
 &\ \le
 \|e^{-t{\mathcal{L}}^{\alpha}}f\|_{L^q(\mathbb M_+,\nu)}\nu(K)^{{1}/{q'}}\\
 &\ \lesssim\|f\|_{L^p(\mathbb M)}\nu(K)^{{1}/{q'}},
 \end{align*}
 and consequently,
 $$
 \|(e^{-t{\mathcal{L}}^{\alpha}})^\ast\nu_K\|_{L^{p'}(\mathbb M)}\lesssim \nu(K)^{1/q'}.
 $$
 Via the above estimate, we have
\begin{align*}
\lambda\nu(E_\lambda(f))&\le\int_{\mathbb M_+}|e^{-t{\mathcal{L}}^{\alpha}}f|\,d\nu_{E_\lambda}\\
 &\ \lesssim\|f\|_{L^p(\mathbb M)}\|(e^{-t{\mathcal{L}}^{\alpha}})^\ast\nu_{E_\lambda}\|_{L^{p'}(\mathbb M)}\\
 &\ \lesssim \|f\|_{L^p(\mathbb
 M)}\nu(E_\lambda(f))^{1/{q'}},
 \end{align*}
 where
$$E_\lambda(f)=\Big\{(t,x)\in\mathbb M_+:\ |e^{-t{\mathcal{L}}^{\alpha}}
f(t,x)|\ge\lambda\Big\}.$$
Hence,
 $$ \sup_{\lambda\in\mathbb R_+}\lambda^q\nu(E_\lambda(f))\lesssim \|f\|_{L^p(\mathbb M)}^q. $$
This, upon choosing a function $f\in L^p(\mathbb M)$ such that
$e^{-t{\mathcal{L}}^{\alpha}}f\ge 1$ on a given compact set
$K\subset\mathbb M_+$, derives
$$\nu(K)^{1/q}\lesssim C_p^{(\alpha)}(K)^{1/p},$$
equivalently,
$$\sup_{\lambda\in\mathbb
R_+}{\lambda^\frac{p}{q}}/{\kappa(\nu;\lambda)}<\infty.$$

Conversely, assume that the last condition is true, i.e., the last
but one is valid for any compact set $K\subset\mathbb M_+$. Thus,
combining  Lemma \ref{l3a} with the capacitary strong type
inequality leads to
\begin{align*}
\int_{\mathbb M_+}|e^{-t{\mathcal{L}}^{\alpha}}f|^q\,d\nu &=\int_0^\infty\nu(E_\lambda(f))\,d\lambda^q\\
&\lesssim \int_0^\infty C_p^{(\alpha)}(E_\lambda(f))^{q/p}\,d\lambda^q\\
&\lesssim \|f\|_{L^p(\mathbb M)}^{q-p}\int_0^\infty C_p^{(\alpha)}(E_\lambda(f))\,d\lambda^p\\
&\lesssim \|f\|_{L^p(\mathbb M)}^{q}
\end{align*}
  for any $f\in C_0(\mathbb M)$, and then for $f\in L^p(\mathbb
M)$ via approximating with $ C_0(\mathbb M)$-functions.

Next, we verify that under $1<p<q<\infty$ the criterion
${\lambda^{p/q}}\lesssim {\kappa(\nu;\lambda)}$ can be replaced by
an easily-checked condition $\nu(B^{(\alpha)}_r(t_0,x_0))\lesssim
r^{q \beta/p}$.

The implication
$${\lambda^{{p}/{q}}}\lesssim
{\kappa(\nu;\lambda)}\Longrightarrow\nu(B^{(\alpha)}_r(t_0,x_0))\lesssim
r^{q\beta/p}$$ follows immediately from Theorem \ref{t12new}.
Conversely, for $(t,x)\in B^{(\alpha)}_r(t_0,x_0)$ with $t_0\lesssim
r^{2\alpha}$ and $(t_0,x_0)\in\mathbb M_+$, using Proposition
\ref{prop-4}, we have
  $K^{\mathcal{L}}_{\alpha,t}(x_0,x)\gtrsim r^{-\beta}$. This, along
  with
Fubini's theorem, implies
\begin{align*}
(e^{-t{\mathcal{L}}^{\alpha}})^\ast\nu_K(x_0)&=\int_{\mathbb M_+}K^{\mathcal{L}}_{\alpha,t}(x_0,x)\,d\nu_K\\
&\simeq\int_{\mathbb M_+}\Big(\int_{\big(K^{\mathcal{L}}_{\alpha,t}(x_0,x)\big)^{-1/\beta}}^\infty\frac{dr}{r^{1+\beta}}\Big)\,d\nu_K\\
&\lesssim\int_{\mathbb M_+}\Big(\int_0^\infty 1_{B^{(\alpha)}_r(t_0,x_0)}\,\frac{dr}{r^{1+\beta}}\Big)\,d\nu_K\\
&\simeq\int_0^\infty\nu_K\big(B^{(\alpha)}_r(t_0,x_0)\big)\,\frac{dr}{r^{1+\beta}}.
\end{align*}
It follows from Minkowski's inequality that
$$
\big\|(e^{-t{\mathcal{L}}^{\alpha}})^\ast\nu_K\big\|_{L^{p'}(\mathbb
M)}\lesssim\int_0^\infty\big\|\nu_K\big(B^{(\alpha)}_r(t_0,\cdot)\big)\big\|_{L^{p'}(\mathbb
M)} \,\frac{dr}{r^{1+\beta}}.
$$
In general,
$$
\|\nu_K\big(B^{(\alpha)}_r(t_0,\cdot)\big)\big\|^{p'}_{L^{p'}(\mathbb
M)}\lesssim\nu(K)^{p'-1}\int_{\mathbb
M}\nu_K(B^{(\alpha)}_r(t_0,x_0))\,d\mu(x_0)
\lesssim\nu(K)^{p'}r^{\beta}.
$$
This implies that for a later-decided number $\delta>0$,
$$
\int_\delta^\infty
\big\|\nu_K\big(B^{(\alpha)}_r(t_0,\cdot)\big)\big\|_{L^{p'}(\mathbb
M)}\frac{dr}{r^{1+\beta}}\lesssim\nu(K)\delta^{-{\beta}/{p}}.
$$
Meanwhile, using $\nu(B^{(\alpha)}_r(t_0,x_0))\lesssim r^{q\beta/p}$,
we have
\begin{eqnarray*}
\|\nu_K\big(B^{(\alpha)}_r(t_0,\cdot)\big)\big\|^{p'}_{L^{p'}(\mathbb
M)}&\lesssim& r^{\beta q(p'-1)/p}\int_{\mathbb
M}\nu_K(B^{(\alpha)}_r(t_0,x_0))\,d\mu(x_0)\\
&\lesssim&\nu(K)r^{\beta(1+{q}/{p(p-1)})}.
\end{eqnarray*}
Furthermore, we obtain
$$
\int_0^\delta\big\|\nu_K\big(B^{(\alpha)}_r(t_0,\cdot)\big)\big\|_{L^{p'}(\mathbb
M)}\frac{dr}{r^{1+\beta}}\lesssim\nu(K)^{1/{p'}}\delta^{{\beta(q-p)}/{p^2}}.
$$
Now, choosing $\delta=\nu(K)^{{p}/{\beta q}}$ and putting the above
estimates together, we find
$$\|(e^{-t{\mathcal{L}}^{\alpha}})^\ast\nu_K\|_{L^{p'}(\mathbb{M})}\lesssim\nu(K)^{1/{q'}},$$
 whence reaching
${\lambda^{{p}/{q}}}\lesssim {\kappa(\nu;\lambda)}$.
\end{proof}


 \subsection{The upper sector case $1<q< p<\infty$}\label{s34}
\subsubsection{Metric measure space case }

In what follows, we give the first main result of this section.
%
%
%
%
%

 \begin{theorem} \label{t4a}
  Let $\mu$ satisfy (\ref{eq1.2}), $1<q<p<\infty$ and $\nu\in\mathcal M_+(\mathbb{M}_+)$. Then the following two statements are equivalent:

 $\mathrm{(i)}$ The extension $e^{-t{\mathcal{L}}^{\alpha}}: L^p(\mathbb{M})\mapsto L^{q}( \mathbb{M}_+,\nu)$ is bounded.

 $\mathrm{(ii)}$
 $$
 \int_0^\infty\Big({\lambda^\frac{p}{q}}/{\kappa(\nu;\lambda)}\Big)^{q/(p-q)}\,{\lambda}^{-1}d{\lambda}<\infty.
 $$

 \end{theorem}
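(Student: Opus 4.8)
The plan is to transfer the whole problem to a pair of discrete capacitary inequalities and to run the classical Maz'ya--Adams scheme (as in \cite{AH} and its Euclidean counterpart \cite{chang}) in the present metric setting, using the capacitary strong type inequality (Lemma \ref{l3a}), the dual description of $C_p^{(\alpha)}$ (Proposition \ref{p22}), and the kernel bounds of Propositions \ref{prop-1} and \ref{prop-4}. Throughout I write $u=e^{-t{\mathcal{L}}^{\alpha}}f$ for $f\ge 0$, $E_j=\{(t,x)\in\mathbb M_+:\ u(t,x)>2^j\}$ (an open set, since $u$ is continuous), and $g_j=\nu(E_j)$; by the definition of $\kappa$ together with the inner regularity of $\nu$ and of $C_p^{(\alpha)}$ one has $C_p^{(\alpha)}(E_j)\gtrsim\kappa(\nu;g_j)$.

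For the sufficiency $\mathrm{(ii)}\Rightarrow\mathrm{(i)}$ I would argue as follows. The layer--cake formula gives $\int_{\mathbb M_+}u^q\,d\nu\lesssim\sum_j 2^{jq}g_j$, while the discretized form of Lemma \ref{l3a} yields $\sum_j 2^{jp}C_p^{(\alpha)}(E_j)\lesssim\|f\|_{L^p(\mathbb M)}^p$, hence $\sum_j 2^{jp}\kappa(\nu;g_j)\lesssim\|f\|_{L^p(\mathbb M)}^p$. I then regroup the indices by the dyadic size of the measure: the block $J_k=\{j:\ 2^k<g_j\le 2^{k+1}\}$ is a set of consecutive integers because $g_j$ is non-increasing in $j$. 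On each block a geometric summation gives $\sum_{j\in J_k}2^{jq}g_j\simeq 2^k 2^{qj_k^{+}}$ and $\sum_{j\in J_k}2^{jp}\kappa(\nu;g_j)\gtrsim\kappa(\nu;2^k)2^{pj_k^{+}}$, where $j_k^{+}=\max J_k$. Setting $y_k=2^{pj_k^{+}}$, the constraint reads $\sum_k\kappa(\nu;2^k)y_k\lesssim\|f\|_{L^p(\mathbb M)}^p$ and the target is $\sum_k 2^k y_k^{q/p}$. A single Hölder step with exponents $p/q$ and $p/(p-q)$ bounds $\sum_k 2^k y_k^{q/p}$ by $\big(\sum_k\kappa(\nu;2^k)y_k\big)^{q/p}$ times $\big(\sum_k 2^{kp/(p-q)}\kappa(\nu;2^k)^{-q/(p-q)}\big)^{(p-q)/p}$, and the last dyadic sum is comparable to the integral in $\mathrm{(ii)}$. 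Assembling these estimates produces $\int_{\mathbb M_+}u^q\,d\nu\lesssim\|f\|_{L^p(\mathbb M)}^q$, which is $\mathrm{(i)}$.

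For the necessity $\mathrm{(i)}\Rightarrow\mathrm{(ii)}$ I would show that boundedness with constant $A$ forces the full integrability. Testing the bound merely on the extremal function of Proposition \ref{p22}(ii) for a single compact $K$ gives only $\nu(K)^{1/q}\le A\,C_p^{(\alpha)}(K)^{1/p}$, i.e. the weaker pointwise estimate $\sup_\lambda \lambda^{p/q}/\kappa(\nu;\lambda)\le A^p$ of Theorem \ref{t3a}; to capture the integral one must superpose. For each $k$ in the admissible range I would pick a near-extremal compact $K_k$ with $\nu(K_k)\ge 2^k$ and $C_p^{(\alpha)}(K_k)\le 2\kappa(\nu;2^k)$, take the extremal pair $\nu_{K_k}\in\mathcal M_+(K_k)$ and $f_k=\big((e^{-t{\mathcal{L}}^{\alpha}})^{\ast}\nu_{K_k}\big)^{1/(p-1)}$ of Proposition \ref{p22}(ii) (so $\|f_k\|_{L^p(\mathbb M)}^p=C_p^{(\alpha)}(K_k)$ and $e^{-t{\mathcal{L}}^{\alpha}}f_k\ge 1$ on $K_k$), and apply $\mathrm{(i)}$ to $f=\sum_k a_k f_k$ with weights $a_k\ge0$ to be optimized. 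Since $e^{-t{\mathcal{L}}^{\alpha}}f\ge a_k$ on $K_k$, the left-hand side is $\gtrsim\sum_k a_k^q 2^k$ after disjointifying the $\nu$-mass of the $K_k$, while $\mathrm{(i)}$ controls it by $A^q\|f\|_{L^p(\mathbb M)}^q$; balancing the two with $a_k\simeq\big(2^k/\kappa(\nu;2^k)\big)^{1/(p-q)}$ and summing reproduces exactly the series form of $\mathrm{(ii)}$.

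I expect the main obstacle to lie in this necessity direction, precisely in estimating $\|\sum_k a_k f_k\|_{L^p(\mathbb M)}$ (equivalently $\|\sum_k a_k(e^{-t{\mathcal{L}}^{\alpha}})^{\ast}\nu_{K_k}\|_{L^{p'}(\mathbb M)}$): for $p>1$ the triangle inequality is far too lossy, so one must exploit a near-orthogonality / nonlinear-potential estimate for the family $\{(e^{-t{\mathcal{L}}^{\alpha}})^{\ast}\nu_{K_k}\}$ together with the identities in Proposition \ref{p22}(ii) to sum the pieces efficiently. A secondary technical point, needed on both sides, is the passage between the dyadic sums over the measure level $2^k$ and the integral in $\mathrm{(ii)}$, which rests on the monotonicity of $\kappa(\nu;\cdot)$; one also uses that $u$ is continuous, so that the level sets $E_j$ are open and inner regularity applies, which is available under the standing assumptions of this section.
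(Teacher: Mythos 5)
Your sufficiency direction $\mathrm{(ii)}\Rightarrow\mathrm{(i)}$ is sound: your regrouping of the levels $E_j$ into blocks $J_k$ by the dyadic size of $g_j=\nu(E_j)$, followed by a single H\"older step with exponents $p/q$ and $p/(p-q)$ against the capacitary strong type inequality of Lemma \ref{l3a}, is a legitimate variant of what the paper does. The paper instead keeps the levels organized by the value of $u$ and works with the differences $\nu(E_{2^j}(f))-\nu(E_{2^{j+1}}(f))$, bounding the resulting sum $S_{p,q}(\nu;f)$ by the integral $I_{p,q}(\nu)$ through the elementary inequality $(a-b)^{p/(p-q)}\le a^{p/(p-q)}-b^{p/(p-q)}$ and a telescoping comparison with $\int_0^\infty \kappa(\nu;s)^{-q/(p-q)}\,ds^{p/(p-q)}$; the two discretizations are interchangeable and both reduce to Lemma \ref{l3a} plus H\"older.

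The necessity direction $\mathrm{(i)}\Rightarrow\mathrm{(ii)}$, however, contains a genuine gap, and you name it yourself: you superpose the extremal functions as $f=\sum_k a_k f_k$ and then have no way to control $\|f\|_{L^p(\mathbb M)}$ (the triangle inequality loses, and no near-orthogonality estimate is supplied), nor do you justify "disjointifying the $\nu$-mass" of the overlapping sets $K_k$. Both obstacles are resolved in the paper by two simple devices that your proposal is missing. First, replace the sum by the pointwise supremum: with $a_j=\big(2^j/\kappa(\nu;2^j)\big)^{1/(p-q)}$ set $f_{i,k}=\sup_{i\le j\le k}a_jf_j$. Since every $f_j\ge 0$, one has pointwise $\big(\sup_j a_jf_j\big)^p=\sup_j (a_jf_j)^p\le\sum_j(a_jf_j)^p$, whence
\begin{equation*}
\|f_{i,k}\|_{L^p(\mathbb M)}^p\le\sum_{j=i}^k a_j^p\|f_j\|_{L^p(\mathbb M)}^p\lesssim\sum_{j=i}^k a_j^p\,\kappa(\nu;2^j),
\end{equation*}
with no orthogonality needed at all; and by positivity of the kernel, $f_{i,k}\ge a_jf_j$ still gives $e^{-t{\mathcal{L}}^{\alpha}}f_{i,k}\ge a_j$ on each $K_j$. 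Second, instead of disjointifying, lower-bound the left-hand side through the non-increasing rearrangement: since $\nu\big(E_{a_j}(f_{i,k})\big)\ge\nu(K_j)>2^j$,
\begin{equation*}
\int_{\mathbb M_+}|e^{-t{\mathcal{L}}^{\alpha}}f_{i,k}|^q\,d\nu\simeq\int_0^\infty\Big(\inf\{\lambda:\ \nu\big(E_\lambda(f_{i,k})\big)\le s\}\Big)^q\,ds\gtrsim\sum_{j=i}^k a_j^q\,2^j,
\end{equation*}
which counts each level only once and so is insensitive to the overlaps of the $K_j$. With this choice of $a_j$ both $\sum_j a_j^p\kappa(\nu;2^j)$ and $\sum_j a_j^q2^j$ equal the same quantity $S=\sum_{j=i}^k 2^{jp/(p-q)}\kappa(\nu;2^j)^{-q/(p-q)}$, so the boundedness hypothesis yields $S\lesssim S^{q/p}$, i.e. $S\lesssim 1$ uniformly in $i,k$, and letting $i\to-\infty$, $k\to\infty$ gives exactly the integral condition $\mathrm{(ii)}$. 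Without the supremum trick (or some substitute for it) your necessity argument does not close.
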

 \begin{proof}  At first, we show that $\mathrm{(i)}\Rightarrow \mathrm{(ii)}$. Suppose $e^{-t{\mathcal{L}}^{\alpha}}: L^p(\mathbb{M})\mapsto L^{q}( \mathbb{M}_+,\nu)$ is bounded. Then
 $$
 \Big(\int_{ \mathbb{M}_+} |e^{-t{\mathcal{L}}^{\alpha}}f|^{q}d\nu\Big)^{{1}/{q}}\lesssim\|f\|_{L^p(\mathbb{M})}\ \ \forall\ \  f\in L^p(\mathbb{M}).
 $$
Therefore,
 $$
 \sup_{\lambda>0}\lambda\Big(\nu\big(E_\lambda(f)\big)\Big)^{{1}/{q}}\lesssim\|f\|_{L^p(\mathbb{M})}\ \ \forall\ \  f\in L^p(\mathbb{M}).
 $$
For each integer $j$, there are a compact set $K_j\subset
\mathbb{M}_+$ and a function $f_j\in L^p(\mathbb{M})$ such that
$$
\begin{cases}
 C_p^{(\alpha)}(K_j)\le 2\kappa(\nu;2^j);\\
\nu(K_j)>2^j;\\
e^{-t{\mathcal{L}}^{\alpha}}f_j\ge 1_{K_j};\\
\|f_j\|_{L^p(\mathbb{M})}^p\le 2C_p^{(\alpha)}(K_j).
\end{cases}
$$
For the integers $i,k$ with $i<k$, let
$$
f_{i,k}=\sup_{i\le j\le
k}\Big(\frac{2^j}{\kappa(\nu;2^j)}\Big)^{1/(p-q)}f_j.
$$
Then
\begin{align*}
 \|f_{i,k}\|_{L^p(\mathbb{M})}^p&\lesssim\sum_{j=i}^k\Big(\frac{2^j}{\kappa(\nu;2^j)}\Big)^{{p}/{(p-q)}}\| f_j\|_{L^p(\mathbb{M})}^p\\
 &\lesssim\sum_{j=i}^k\Big(\frac{2^j}{\kappa(\nu;2^j)}\Big)^{{p}/{(p-q)}}\kappa(\nu;2^j).
 \end{align*}
 Note that for $i\le j\le k$,
 $$
 (t,x)\in K_j\Longrightarrow |e^{-t{\mathcal{L}}^{\alpha}}f_{i,k}(t,x)|\ge \Big(\frac{2^j}{\kappa(\nu;2^j)}\Big)^{{1}/{(p-q)}}.
 $$
 This in turn leads to
 $$
 2^j<\nu(K_j)\le
 \nu\Big(E_{\big({2^j}/{\kappa(\nu;2^j)}\big)^{{1}/{(p-q)}}}(f_{i,k})\Big).
 $$
 Hence,
 \begin{align*}
 \|f_{i,k}\|_{L^p(\mathbb{M})}^q&\gtrsim\int_{ \mathbb{M}_+}|e^{-t{\mathcal{L}}^{\alpha}}f_{i,k}|^q\,d\nu\\
 &\simeq\int_0^\infty \Big(\inf\{\lambda:\ \nu\big(E_\lambda(f_{i,k})\big)\le s\}\Big)^q ds\\
 &\gtrsim\sum_{j=i}^k\Big(\inf\{\lambda:\ \nu\big(E_{\lambda}(f_{i,k})\big)\le 2^j\}\Big)^q2^j\\
 &\gtrsim \sum_{j=i}^k\Big(\frac{2^j}{\kappa(\nu;2^j)}\Big)^{{q}/{(p-q)}}2^j\\
 &\gtrsim\left(\frac{\sum_{j=i}^k\Big({2^j}/{\kappa(\nu;2^j)}\Big)^{{q}/{(p-q)}}2^j}{\Big(\sum_{j=i}^k\Big({2^j}/{\kappa(\nu;2^j)}\Big)
 ^{{p}/{(p-q)}}\kappa(\nu;2^j)\Big)^{{q}/{p}}}\right)\|f_{i,k}\|_{L^p(\mathbb{M})}^q\\
 &\approx \Big(\sum_{j=i}^k\frac{2^{{jp}/{(p-q)}}}{\big(\kappa(\nu;2^j)\big)^{{q}/{(p-q)}}}\Big)^{{(p-q)}/{p}}
 \|f_{i,k}\|_{L^p(\mathbb{M})}^q.
 \end{align*}
 This implies
 $$
\int_0^\infty\Big(\lambda^{p/q}/\kappa(\nu;\lambda)\Big)^{{q}/{(p-q)}}\lambda^{-1}\,d\lambda\lesssim
\sum_{j=-\infty}^\infty\frac{2^{{jp}/{(p-q)}}}{\big(\kappa(\nu;2^j)\big)^{{q}/{(p-q)}}}\lesssim
1.
 $$

 Secondly, we prove that $\mathrm{(ii)}\Rightarrow \mathrm{(i)}$. Suppose
 $$
 I_{p,q}(\nu)=\int_0^\infty\Big(\frac{\lambda^{p/q}}{\kappa(\nu;\lambda)}\Big)^{{q}/{(p-q)}}\frac{d\lambda}{\lambda}<\infty.
 $$
 Now for each integer $j=0,\pm 1,\pm 2,...,$ and $f\in C_0(\mathbb{M})$, let
 $$
 S_{p,q}(\nu;f)=\sum_{j=-\infty}^\infty
 \frac{\big(\nu\big(E_{2^j}(f)\big)-\nu\big(E_{2^{j+1}}(f)\big)\big)^{{p}/{(p-q)}}}{\Big(C^{(\alpha)}_p\big(E_{2^j}(f)\big)\Big)^{{q}/{(p-q)}}}.
 $$
 Using integration-by-part, H\"older's inequality and Lemma \ref{l3a}, we obtain

 \begin{align*}
 &\int_{ \mathbb{M}_+}|e^{-t{\mathcal{L}}^{\alpha}}f|^q\, d\nu\\
 &\ \ =-\int_0^\infty\lambda^q\,d\nu(E_\lambda(f))\\
 &\ \ \lesssim\sum_{j=-\infty}^\infty\big(\nu\big(E_{2^j}(f)\big)-\nu\big(E_{2^{j+1}}(f)\big)2^{jq}\\
 &\ \ \lesssim(S_{p,q}(\nu;f))^{{(p-q)}/{p}}\Big(\sum_{j=-\infty}^\infty 2^{jp}C_p^{(\alpha)}\big(E_{2^j}(f)\big)\Big)^{q/p}\\
 &\ \ \lesssim(S_{p,q}(\nu;f))^{{(p-q)}/{p}}\Big(\int_0^\infty C_p^{(\alpha)}\big(\{(t,x)\in \mathbb{M}_+:\  |e^{-t{\mathcal{L}}^{\alpha}}f(t,x)|>\lambda\}\big)\,d\lambda^p\Big)^{{q}/{p}}\\
 &\ \ \lesssim(S_{p,q}(\nu;f))^{{(p-q)}/{p}}\|f\|_{L^p(\mathbb{M})}^q.
 \end{align*}
 Note also that

 \begin{align*}
& \left(S_{p,q}(\nu;f)\right)^{{(p-q)}/{p}}\\
 &\ \ =\Big(\sum_{j=-\infty}^\infty\frac{\big(\nu(E_{2^j}(f))-\nu(E_{2^{j+1}}(f))\big)^{{p}/{(p-q)}}}
 {\Big(C_p^{(\alpha)}\big(E_{2^j}(f)\big)\Big)^{{q}/{(p-q)}}}\Big)^{{(p-q)}/{p}}\\
  &\ \ =\Big(\sum_{j=-\infty}^\infty\frac{\big(\nu(E_{2^j}(f))-\nu(E_{2^{j+1}}(f))\big)^{{p}/{(p-q)}}}
  {\Big(\kappa(\nu;\nu\big(E_{2^j}(f)\big))\Big)^{{q}/{(p-q)}}}\Big)^{{(p-q)}/{p}}\\
  &\ \ =\Big(\sum_{j=-\infty}^\infty\frac{\big(\nu(E_{2^j}(f)\big)^{{p}/{(p-q)}}-\big(\nu(E_{2^{j+1}}(f)\big)^{{p}/{(p-q)}}}
  {\Big(\kappa(\nu;\nu\big(E_{2^j}(f)\big))\Big)^{{q}/{(p-q)}}}\Big)^{{(p-q)}/{p}}\\
    &\ \ \lesssim\Big(\int_0^\infty\frac{d s^{{p}/{(p-q)}}}{\big(\kappa(\nu; s)\big)^{{q}/{(p-q)}}}\Big)^{{(p-q)}/{p}}\\
 &\ \ \approx \big(I_{p,q}(\nu)\big)^{{(p-q)}/{p}}.
 \end{align*}
 Therefore,
 $$
 \Big(\int_{ \mathbb{M}_+}|e^{-t{\mathcal{L}}^{\alpha}}f|^q\,d\nu\Big)^{1/q}\lesssim \big(I_{p,q}(\nu)\big)^{{(p-q)}/{pq}}\|f\|_{L^p(\mathbb{M})}.
 $$
 \end{proof}

\subsubsection{Lie group case}
Theorems \ref{t3a}\ \&\ \ref{t4a} establish the relation between the extension operator $e^{-t\mathcal{L}^{\alpha}}$ and the capacities of the compact
sets in $\mathbb{M}_{+}$. In this section, if the metric measure spaces have some translation invariance: a family of dyadic cubes are still dyadic cubes
 under the translation, we can investigate   the boundedness of the extension
$e^{-t{\mathcal{L}}^{\alpha}}: L^p(\mathbb{M})\mapsto L^{q}(
\mathbb{M}_+,\nu)$ via the Hedberg-Wolff potential for
$e^{-t\mathcal{L}^{\alpha}}$ on these metric spaces. Without loss of
generality, in the sequel, we focus on a special class of metric
measure spaces which are called the stratified Lie groups or Carnot
groups.    At this time,  $\mathbb{M}=\mathbb{G}$, where
$\mathbb{G}$ is a stratified Lie group and   the parameters $\beta$
and $\beta^{\star}$ in (\ref{eq1.2}) and (\ref{eq1.3})  are exactly
$Q$, where $Q$ is the homogeneous dimension of $\mathbb{G}$.

In what follows,  we first recall some basic facts of stratified Lie
groups (cf. \cite{Folland}). A Lie group $\mathbb{G}$ is called
stratified if it is nilpotent, connected and simply  connected, and
its Lie algebra $\mathfrak{g}$ admits a vector space decomposition
$\mathfrak{g} = V_{1}\oplus \cdots \oplus V_{m}$ such that $[V_{1},
V_{k}] = V_{k+1}$ for $1\leq k < m$ and $[V_{1}, V_{m}] =0$. If
$\mathbb{G}$ is stratified, its Lie algebra admits a family of
dilations, namely,
\begin{equation*}
\delta_{r}(X_{1} + X_{2} + \cdots + X_{m}) = rX_{1} + r^{2}X_{2} +
\cdots + r^{m}X_{m}\quad  (X_{j}\in V_{j}).
\end{equation*}
Assume that $\mathbb{G}$ is a Lie group with underlying manifold
$\mathbb{R}^{n}$ for some positive integer $n$. $\mathbb{G}$
inherits dilations from $\mathfrak{g}\colon $ if $x \in \mathbb{G}$
and $r > 0$, we write
\begin{equation*}\label{equ2}
rx = (r^{d_{1}}x_{1}, \cdots , r^{d_{n}}x_{n}),
\end{equation*}
where $1\leq d_{1}\leq \cdots \leq d_{n}$. The map $x \rightarrow
rx$ is an automorphism of $\mathbb{G}$. The left (or right) Haar
measure on $\mathbb{G}$ is simply $dx_{1} \cdots dx_{n}$, which is
the Haar measure on $\mathfrak{g}$. We still denote by $\mu$ the
Haar measure on $\mathbb{G}$.  The inverse of any $x\in \mathbb{G}$
is simply $-x$. The group law must have the form
\begin{equation*}\label{equ3}
xy = (p_{1}(x, y), \cdots , p_{n}(x, y))
\end{equation*}
for some polynomials $p_{1}, \cdots , p_{n}$ about $x_{1}, \cdots
,x_{n}, y_{1}, \cdots , y_{n}$.

The number $Q = \sum _{j=1}^{m} j (\text{ dim }V_{j})$ is known as the
homogeneous dimension of $\mathbb{G}$. We define a homogeneous norm function
$\mid \cdot \mid$ on $\mathbb{G}$ which is smooth away from $0$. Therefore,
$\left| rx\right|  = r\left| x\right| $ for all $x \in \mathbb{G}$, $r > 0$,
$\left| x^{-1}\right|  = \left| x\right| $ for all $x \in \mathbb{G}$, and
$\left| x\right|  > 0$ if $x \neq 0$. The homogeneous norm induces a
quasi-distance $d$ which is defined by $d(x, y) \colon= \left|
x^{-1}y\right| $.

   Note that $\mu( B(x, r)) \simeq r^{Q}$.
Therefore,  $\mathbb{G}$ is an  Ahlfors-David regular space, that
is,  the index $\beta^{\star}$ in (\ref{eqa2.1}) is exactly $Q$.

Before proving the next main result, we recall the dyadic  type
partitions on spaces of homogeneous type (cf. \cite{Christ}). They
are analogues of   Euclidean dyadic cubes on the space of
homogeneous type.

Let $\mathcal{X}$ be a  space of homogeneous type equipped with a
quasi-metric $d_c$ and   a doubling measure $\mu$ such that the
associated balls are open.    Let $\delta$ be a small positive
number. For each $k\in \mathbb{Z}$, fix a maximal collection of
points $z^k_{\gamma}\in  \mathcal{X}$ satisfying
\begin{equation}d_c(z^k_{\gamma},z^k_{\beta'})\ge \delta^{k}
\forall\ \gamma\neq \beta'.
\end{equation}
Of course, by maximality there is the reverse inequality. For each
$k$ and each $x\in \mathcal{X}$, there exists $\alpha$ such that
\begin{equation}d_c(x,z^k_{\gamma})<\delta^k.\end{equation}
\begin{definition}A tree is a partial ordering $\le$ of the set of
all ordered pairs $(k,\gamma)$, which satisfies:
\begin{itemize}
\item[\rm{(a)}] $(k,\gamma)\le (l,\beta')\Rightarrow k\ge l.$

\item[\rm{(b)}] For each $(k,\gamma)$ and $l\le k$ there exists a
unique $\beta'$ such that $(k,\gamma)\le (l,\beta').$

\item[\rm{(c)}] $(k,\gamma)\le (k-1,\beta')\Rightarrow
d_c(z^k_{\gamma},z^{k-1}_{\beta'})<\delta^{k-1}.$

\item[\rm{(d)}]
$d_c(z^k_{\gamma},z^{k-1}_{\beta'})<\frac{\delta^{k-1}}{2}\Rightarrow(k,\gamma)\le
(k-1,\beta').$
\end{itemize}
\end{definition}

 Lemma 13 in  \cite{Christ} implies the existence of the above tree.
 Fix a tree, and let $a_0\in (0,1)$ be a small constant. Denote
  $$Q^k_{\gamma}=\bigcup_{(l,\beta')\le (k,\gamma)}B(z^l_{\beta'},a_0\delta^l),$$
where $B(z^l_{\beta'},a_0\delta^l)=\{x\in \mathcal{X}:
d_c(x,z^l_{\beta'})<a_0\delta^l\}.$

We conclude from the following theorem due to Christ \cite{Christ}
that $Q^k_{\gamma}$ is exactly an dyadic cube for every $k$ and
$\gamma$.
\begin{proposition}\label{prop5.8} There exists a collection of open sets $\{Q^k_{\gamma}\subseteq \mathcal{X}, k\in
 \mathbb{Z},\gamma\in I_k \}$, and constants $\delta\in (0,1)$,
 $a_0>0$, $\eta>0$, and $C_1,C_2<\infty$ such that

$\mathrm{(a)}$ $\mu\big(\mathcal{X}\backslash \bigcup_{\gamma\in
I_k}Q^k_{\gamma}\big)=0 \  \forall\ k.$

$\mathrm{(b)}$ if $l\ge k$, then either $Q^k_{\beta'}\subseteq
Q^k_{\gamma}$ or $Q^k_{\beta'}\bigcap Q^k_{\gamma}\neq \varnothing.$

$\mathrm{(c)}$ For each $(k,\gamma)$ and each $l<k$ there is a
unique $\beta'$ such that $Q^k_{\gamma}\subseteq Q^l_{\beta'}$.

$\mathrm{(d)}$ Diameter $(Q^k_{\gamma})\le C_1 \delta^k.$

$\mathrm{(e)}$ Each $(Q^k_{\gamma})$ contains some ball
$B(z^k_{\gamma},a_0\delta^k)$.

$\mathrm{(f)}$ $\mu\big(x\in Q^k_{\gamma}:
d_c(x,\mathcal{X}\backslash Q^k_{\gamma})\le t \delta^k \big)\le C_2
t^{\eta}\mu(Q^k_{\gamma})$  $\forall k,\gamma,\ \forall t>0.$
\end{proposition}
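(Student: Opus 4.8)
The plan is to take the tree ordering supplied by Lemma~13 of \cite{Christ} as the combinatorial backbone and to read off properties $\mathrm{(a)}$--$\mathrm{(f)}$ from the geometry of the constituent balls $B(z^l_{\beta'},a_0\delta^l)$, the maximality of the separated sets $\{z^k_\gamma\}$, and the doubling property of $\mu$. First I would record the two immediate consequences of the definition $Q^k_\gamma=\bigcup_{(l,\beta')\le(k,\gamma)}B(z^l_{\beta'},a_0\delta^l)$. Since $(k,\gamma)\le(k,\gamma)$, the ball $B(z^k_\gamma,a_0\delta^k)$ appears in the union, which gives $\mathrm{(e)}$ at once. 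For the nesting statements $\mathrm{(b)}$ and $\mathrm{(c)}$ I would use transitivity of $\le$ together with tree property (b) (a unique ancestor at every coarser level): if $(k,\gamma)\le(l,\beta')$, then every pair below $(k,\gamma)$ is below $(l,\beta')$, so the defining unions are nested and $Q^k_\gamma\subseteq Q^l_{\beta'}$; the laminar either-contained-or-disjoint structure then follows because two cubes sharing a descendant must share an ancestor.

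For the diameter bound $\mathrm{(d)}$ I would chain the parent--child estimate. Any point $x\in Q^k_\gamma$ lies in some ball $B(z^l_{\beta'},a_0\delta^l)$ with $(l,\beta')\le(k,\gamma)$; walking up the tree from $(l,\beta')$ to its successive ancestors yields a chain of centers whose consecutive members are at quasi-distance less than $\delta^{j-1}$ when passing from level $j$ to level $j-1$, by tree property (c). Summing the resulting geometric series and invoking the quasi-triangle inequality gives $d_c(x,z^k_\gamma)\lesssim\delta^k$, which is $\mathrm{(d)}$ with a suitable $C_1$. Property $\mathrm{(a)}$ I would then deduce from maximality: the full balls $B(z^k_\gamma,\delta^k)$ cover $\mathcal{X}$, the union $\bigcup_\gamma Q^k_\gamma$ equals the union of all balls $B(z^l_{\beta'},a_0\delta^l)$ over levels $l\ge k$, and the residual set is contained in the countable union of cube boundaries, which carries no $\mu$-mass once $\mathrm{(f)}$ is known (letting $t\to0$).

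The hard part is the small-boundary estimate $\mathrm{(f)}$, and this is where essentially all of the work sits. I would show that a point $x\in Q^k_\gamma$ with $d_c(x,\mathcal{X}\setminus Q^k_\gamma)\le t\delta^k$ must lie in a thin annular layer of one of the descendant balls, and then run the iterative scheme of \cite{Christ}: at each intermediate scale the measure of the boundary layer is a fixed fraction $(1-c)$ of its measure at the next coarser scale, the fraction $c$ being extracted from the doubling constant together with the fact from $\mathrm{(e)}$ that every cube contains a ball of comparable radius and hence a definite interior. Iterating across the scales between $t\delta^k$ and $\delta^k$ produces a bound $C_2 t^\eta\mu(Q^k_\gamma)$, with $\eta>0$ determined by $\delta$ and the doubling constant. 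The genuinely delicate points, which I expect to be the main obstacle, are keeping the geometric decay rate uniform over all cubes simultaneously and verifying that the annular layers at successive scales are disjoint enough for the iteration to close; once these are in hand, everything else reduces to the quasi-triangle inequality and the doubling property of $\mu$.
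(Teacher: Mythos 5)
A preliminary remark: the paper does not prove this proposition at all --- it is Christ's dyadic--cube theorem, quoted with a citation to \cite{Christ} right after the tree and the sets $Q^k_\gamma$ are introduced --- so your attempt has to be measured against Christ's original argument, whose outline you do follow. Within that outline, your handling of (e) (reflexivity of the order) and of (d) (chaining ancestor-to-ancestor distances via tree property (c) and summing the geometric series) is correct. The problems are in (b)/(c), in (a), and in the fact that (f), the actual content of the theorem, is described rather than proved. For (b)/(c) (note the paper's statement of (b) is misprinted: it must read $Q^l_{\beta'}\subseteq Q^k_{\gamma}$ or $Q^l_{\beta'}\cap Q^k_{\gamma}=\varnothing$), your justification ``two cubes sharing a descendant must share an ancestor'' is not a proof: two intersecting cubes share a \emph{point}, and that point lies in two balls $B(z^m_{\mu},a_0\delta^m)$ and $B(z^{m'}_{\mu'},a_0\delta^{m'})$ whose index pairs sit on incomparable branches of the tree, and no purely combinatorial statement relates them. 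What is needed is a metric separation of branches: suppose $(m,\mu)\le(k,\gamma)$, $(m',\mu')\le(k,\gamma')$, $\gamma\neq\gamma'$, $m\le m'$. If $m'=m$ the net separation already gives $d_c(z^m_{\mu},z^{m'}_{\mu'})\ge\delta^m$. If $m'>m$, let $(m+1,\nu)$ be the level-$(m+1)$ ancestor of $(m',\mu')$; then $(m+1,\nu)\not\le(m,\mu)$ (otherwise uniqueness of level-$k$ ancestors would give $\gamma=\gamma'$), so tree property (d) forces $d_c(z^{m+1}_{\nu},z^m_{\mu})\ge\delta^m/2$, and chaining down via (c) gives $d_c(z^{m'}_{\mu'},z^m_{\mu})\ge\delta^m/2-\delta^{m+1}/(1-\delta)\ge\delta^m/4$ once $\delta$ is small (I ignore the quasi-metric constant, which only tightens the requirements). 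Only now does the choice of a small $a_0$ make the two balls disjoint; this is exactly where the smallness of $\delta$ and $a_0$ enters, and it is also what the uniqueness in (c) rests on.

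Your derivation of (a) also fails. Property (f) bounds the set of points \emph{inside} the open cube at distance at most $t\delta^k$ from its complement; as $t\to0$ this set decreases to $\varnothing$, so letting $t\to0$ in (f) says nothing about $\mu(\partial Q^k_\gamma)$, which is what your reduction to a countable union of boundaries requires. Worse, the iteration you sketch for (f) must cover the boundary layer, up to a null set, by pairwise disjoint finer-level cubes --- i.e.\ it uses (a) and (b) at finer scales --- so deducing (a) from (f) is circular. The correct route (Christ's) is a density argument: for $x\notin\bigcup_\gamma Q^k_\gamma$ and each $\ell\ge k$, maximality of $\{z^\ell_\beta\}$ gives a center with $d_c(x,z^\ell_\beta)<\delta^\ell$; the ball $B(z^\ell_\beta,a_0\delta^\ell)$ is contained in some level-$k$ cube and in $B(x,2\delta^\ell)$, so by doubling the complement of $\bigcup_\gamma Q^k_\gamma$ has relative measure at most $1-c<1$ in $B(x,2\delta^\ell)$ for every $\ell$, and the Lebesgue differentiation theorem for doubling measures forces that complement to be null. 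Finally, on (f): your plan (inside each intermediate sub-cube meeting the layer, delete a concentric half of its central ball, which by (e) lies strictly inside the sub-cube and hence, for a suitable ratio of scales, misses the thinner layer; conclude a decay factor $1-c$ per step from doubling) is the right one, but the key inequality is never established, and the difficulty you flag --- disjointness of successive annular layers --- is not the real issue: the layers are nested, and what the iteration actually needs is precisely the same-level disjointness (b) and the covering (a) whose proofs are the gaps above.
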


Below we give two technical lemmas which will be used in the sequel.
The first is about $L^p$-boundedness of the fractional maximal
operator of parabolic type on $\mathbb{G}$.
 \begin{lemma}\label{l21}  For a nonnegative Radon measure $\nu$ on $\mathbb{G}_+$, where $\mathbb{G}_+=\mathbb{G}\times (0,\infty)$, let
 $$
  M_{\alpha}\nu(x)=\sup_{r>0}{r^{-Q}}\nu\big({B^{(\alpha)}_{r}(r^{2\alpha}, x)}\big)
  $$
  be the fractional parabolic maximal function of $\nu$. Then
 $$ \|M_{\alpha}\nu\|_{L^{p}(\mathbb{G} )}\approx \|(e^{-t\mathcal{L}^{\alpha}})^{*}\nu\|_{L^p(\mathbb{G} )}\ \ \forall\  \ p\in (1,\infty). $$
 \end{lemma}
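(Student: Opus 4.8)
The plan is to derive the equivalence from the two-sided kernel estimate. On the stratified group we have $\beta=\beta^{\star}=Q$, so {\bf (A4)} together with Propositions \ref{prop-1} and \ref{prop-4} gives
\[
K^{\mathcal L}_{\alpha,t}(x,y)\simeq \frac{t}{(t^{1/2\alpha}+d(x,y))^{Q+2\alpha}}\qquad\forall\ (t,x,y)\in (0,\infty)\times\mathbb G\times\mathbb G .
\]
Writing $U\nu(x):=(e^{-t\mathcal L^{\alpha}})^{*}\nu(x)=\int_{\mathbb G_+}K^{\mathcal L}_{\alpha,t}(y,x)\,d\nu(t,y)$, the lemma splits into the two inequalities $\|M_{\alpha}\nu\|_{L^p}\ls\|U\nu\|_{L^p}$ and $\|U\nu\|_{L^p}\ls\|M_{\alpha}\nu\|_{L^p}$, and the essential point is that only the first is pointwise.

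For the easy direction I would prove $M_{\alpha}\nu\ls U\nu$ pointwise. Fix $x$ and $r>0$ and restrict the integral defining $U\nu(x)$ to $(t,y)\in B^{(\alpha)}_{r}(r^{2\alpha},x)$, that is to $2r^{2\alpha}<t<3r^{2\alpha}$ and $d(x,y)<r$. There $t^{1/2\alpha}\simeq r\gs d(x,y)$, so the kernel is $\simeq r^{2\alpha}/r^{Q+2\alpha}=r^{-Q}$, whence $U\nu(x)\gs r^{-Q}\nu\big(B^{(\alpha)}_{r}(r^{2\alpha},x)\big)$; taking the supremum over $r>0$ yields $U\nu(x)\gs M_{\alpha}\nu(x)$ and the first $L^p$ bound follows immediately.

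The reverse inequality $\|U\nu\|_{L^p}\ls\|M_{\alpha}\nu\|_{L^p}$ is the main obstacle, and I expect it cannot be obtained by pointwise domination. Indeed, a parabolic dyadic decomposition of $U\nu(x)$ into the shells $E_{j,i}=\{2^{2\alpha j}\le t<2^{2\alpha(j+1)},\ 2^{j+i-1}\le d(x,y)<2^{j+i}\}$, on which the kernel is $\simeq 2^{-(j+i)Q-2\alpha i}$, combined with the Ahlfors regularity $\mu(B(x,r))\simeq r^{Q}$ and the covering of each spatial annulus by $\simeq 2^{iQ}$ balls of radius $2^{j}$ centered in it, only yields $U\nu(x)\ls\int_{\mathbb G}d(x,z)^{-Q}M_{\alpha}\nu(z)\,d\mu(z)$, the \emph{borderline} Riesz potential of order $0$ of $M_{\alpha}\nu$, which is not bounded on $L^p$. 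The positive order $2\alpha$ that is genuinely present (the parabolic ball has $\widetilde{\mu}$-measure $\simeq r^{Q+2\alpha}$ while $M_{\alpha}$ normalizes only by $r^{Q}$, so $U$ and $M_{\alpha}$ are a fractional integral and a fractional maximal function of the same positive order in the parabolic metric) has to be recovered by a good-$\lambda$ argument of Muckenhoupt--Wheeden type.

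Concretely, I would fix $\lambda>0$, observe that $U\nu$ is lower semicontinuous so that $\Omega_{\lambda}=\{U\nu>\lambda\}$ is open, and perform a Whitney decomposition of $\Omega_{\lambda}$ into Christ dyadic cubes (Proposition \ref{prop5.8}), each cube $Q$ having diameter comparable to its distance to $\Omega_{\lambda}^{c}$; here the translation invariance of $\mathbb G$ and property $\mathrm{(f)}$ of Proposition \ref{prop5.8} make these cubes behave like Euclidean Whitney cubes. On each $Q$ I would split $U\nu=U^{\mathrm{near}}_{Q}\nu+U^{\mathrm{far}}_{Q}\nu$, near meaning the part of $\nu$ carried by the parabolic region sitting over a fixed dilate of $Q$ and far the remainder. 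The far part is shown to be $\ls\lambda$ on $Q$ by comparison with the value of $U\nu$ at a point of $\Omega_{\lambda}^{c}$ adjacent to $Q$, using that the explicit kernel varies slowly off the diagonal (from the two-sided bound of Proposition \ref{prop-4} and the triangle inequality one gets $|K^{\mathcal L}_{\alpha,t}(x,y)-K^{\mathcal L}_{\alpha,t}(x',y)|\ls \frac{d(x,x')}{t^{1/2\alpha}+d(x,y)}K^{\mathcal L}_{\alpha,t}(x,y)$ when $d(x,y)\gg d(x,x')$). For the near part a weak-type bound for the truncated potential, together with the elementary estimate that its $\nu$-mass over $Q$ is $\ls M_{\alpha}\nu(z)\,\ell(Q)^{Q}$ for any $z\in Q$ with $M_{\alpha}\nu(z)\le\gamma\lambda$, gives the control of its superlevel set by $\gamma^{\delta}\mu(Q)$ for some $\delta>0$. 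Summing over the Whitney cubes produces the good-$\lambda$ inequality $\mu\big(\{U\nu>2\lambda,\ M_{\alpha}\nu\le\gamma\lambda\}\big)\ls\gamma^{\delta}\mu(\Omega_{\lambda})$, and integrating against $d\lambda^{p}$ with $\gamma$ chosen small yields $\|U\nu\|_{L^p}\ls\|M_{\alpha}\nu\|_{L^p}$. The delicate points, and where I expect the real work, are exactly the slow-variation estimate for the far part and the weak-type bound for the truncated near potential, both in the parabolic Christ-cube geometry.
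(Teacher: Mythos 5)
Your first inequality is correct and is exactly the paper's argument: on $B^{(\alpha)}_{r}(r^{2\alpha},x)$ one has $t\simeq r^{2\alpha}$ and $d(x,y)<r$, so the two-sided bound coming from \textbf{(A4)} (Propositions \ref{prop-1} and \ref{prop-4} with $\beta=\beta^{\star}=Q$) forces $K^{\mathcal L}_{\alpha,t}\simeq r^{-Q}$ there, giving $M_{\alpha}\nu\lesssim (e^{-t\mathcal{L}^{\alpha}})^{*}\nu$ pointwise. Your plan for the converse is also the paper's plan: the paper derives it from the good-$\lambda$ inequality (\ref{2.4}), attributed to a ``slight modification'' of \cite[Theorem 3.6.1]{AH}, and what you outline (Whitney decomposition into Christ cubes, near/far splitting, a weak-type bound for the near part, integration in $\lambda$) is precisely the Muckenhoupt--Wheeden proof of that inequality written out.

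The genuine gap is the step you call elementary: for the maximal function of this lemma it is \emph{false} that the near mass over a Whitney cube is $\lesssim M_{\alpha}\nu(z)\,\ell(Q)^{Q}$ for a point $z\in Q$ with $M_{\alpha}\nu(z)\le\gamma\lambda$. The test sets $B^{(\alpha)}_{r}(r^{2\alpha},z)$ are pairwise disjoint in the $t$-variable as $r$ runs over dyadic scales (the box at scale $r$ sees only heights $2r^{2\alpha}<t<3r^{2\alpha}$), and their union over all $r$ is only the cone $\{(t,y):d(y,z)<(t/2)^{1/2\alpha}\}$; unlike Riesz balls or parabolic tents they are not nested, so bounding every box does not bound the mass sitting over $Q$. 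In fact the inequality you are trying to prove fails for $M_\alpha$ as defined: take $\mathbb{G}=\mathbb{R}$ (so $Q=1$), $\alpha=1/2$, and let $\nu$ be the sum over $k=1,\dots,N$ of Lebesgue measure on $B(0,1)$ placed on the slice $t=t_{k}:=4^{-k}$. An interval $(2r,3r)$ contains at most one $t_{k}$, so every anchored box meets at most one slab and $M_{\alpha}\nu\le 2$ everywhere, supported in $B(0,2)$; on the other hand each slab contributes $\gtrsim 1$ to $(e^{-t\mathcal{L}^{\alpha}})^{*}\nu(x)$ at every $x\in B(0,1/2)$, since $\int_{B(0,1)}\frac{t_{k}}{(t_{k}+|x-y|)^{2}}\,dy\simeq 1$, whence $(e^{-t\mathcal{L}^{\alpha}})^{*}\nu\gtrsim N$ on $B(0,1/2)$. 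Thus $\|(e^{-t\mathcal{L}^{\alpha}})^{*}\nu\|_{L^{p}}\gtrsim N\,\|M_{\alpha}\nu\|_{L^{p}}$, so no good-$\lambda$ inequality with constants independent of $\nu$ can hold; this is not a repairable technical omission in your scheme, and the paper's own appeal to Adams--Hedberg, i.e.\ inequality (\ref{2.4}), founders on exactly the same point. Both your outline and the paper's citation do become correct if the anchored boxes are replaced by the nested Carleson tents $B(x,r)\times(0,r^{2\alpha})$, for which the near-mass bound is true. Two secondary inaccuracies in your sketch: the far-part difference estimate with gain $d(x,x')/(t^{1/2\alpha}+d(x,y))$ cannot be deduced from a two-sided size bound (only the comparability $K^{\mathcal{L}}_{\alpha,t}(x,y)\simeq K^{\mathcal{L}}_{\alpha,t}(x',y)$ for $d(x,y)\gg d(x,x')$ follows, which is all the argument needs), and the weak-type estimate available for the near potential is only $(1,1)$ rather than $(1,(Q+2\alpha)/Q)$ (a point mass at height $t_{0}\to 0$ defeats the latter), which would still suffice, with $\gamma^{1}$ in place of $\gamma^{\delta}$.
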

 \begin{proof} A straightforward estimation with $x\in \mathbb{G} $ and $(e^{-t\mathcal{L}^{\alpha}})^{*}\nu$ gives
 \begin{align*}
 (e^{-t\mathcal{L}^{\alpha}})^{*}\nu(x)&\gtrsim \int_{B^{(\alpha)}_{r}(r^{2\alpha},\ x)}\frac{t}{(t^{{1}/{2\alpha}}+d(x,y))^{Q+2\alpha}}d\nu(t,y)\\
 &\gtrsim
 \frac{\nu(B^{(\alpha)}_{r}(r^{2\alpha}, x))}{r^{Q}}\ \ \forall\ \ r>0,
 \end{align*}
 whence
 $$
 (e^{-t\mathcal{L}^{\alpha}})^{*}\nu(x)\gtrsim M_{\alpha}\nu(x).
 $$
 This implies
 $$
 \|M_{\alpha}\nu\|_{L^p(\mathbb{G} )}\lesssim \|R^\ast_{\alpha}\nu\|_{L^p(\mathbb{G} )}.
 $$

 To prove the converse inequality, we slightly modify \cite[(3.6.1)]{AH} to get two constants $a>1$ and $b>0$ such that for any $\lambda>0$ and $0<\varepsilon\leq 1$, one has the following good-$\lambda$ inequality
 \begin{align}\label{2.4}
 &\mu(\{x\in \mathbb{G}:\ (e^{-t\mathcal{L}^{\alpha}})^{*}\nu(x)>a\lambda\})\notag\\
 &\ \leq b\varepsilon^{(Q+2\alpha)/Q}\mu(\{x\in \mathbb{G}:\ (e^{-t\mathcal{L}^{\alpha}})^{*}\nu(x)>\lambda\})\\
 &\ \ +\mu(\{x\in \mathbb{G}:\ M_{\alpha}\nu(x)>\varepsilon\lambda\}).\nonumber
 \end{align}
 Inspired by \cite[Theorem 3.6.1]{AH}, we proceed the proof by using $(\ref{2.4})$.
 Multiplying $(\ref{2.4})$ by $\lambda^{p-1}$ and integrating in $\lambda$, we have for any $\gamma>0$,
 \begin{eqnarray*}
 &&\int_{0}^{\gamma}\mu(\{x\in \mathbb{G}:\ (e^{-t\mathcal{L}^{\alpha}})^{*}\nu(x)>a\lambda\})\lambda^{p-1}d\lambda\\
 &&\ \ \leq b\varepsilon^{(Q+2\alpha)/Q}\int_{0}^{\gamma}\mu(\{x\in \mathbb{G}:\ (e^{-t\mathcal{L}^{\alpha}})^{*}\nu(x)>\lambda\})\lambda^{p-1}d\lambda\\
 &&\quad+\int_{0}^{\gamma}\mu(\{x\in \mathbb{G}:\  M_{\alpha}\nu(x)>\varepsilon\lambda\})\lambda^{p-1}d\lambda.
 \end{eqnarray*}
 An equivalent formulation of the above inequality is
 \begin{eqnarray*}
 &&a^{-p}\int_{0}^{a\gamma}\mu(\{x\in \mathbb{G}:\ (e^{-t\mathcal{L}^{\alpha}})^{*}\nu(x)>\lambda\})\lambda^{p-1}d\lambda\\
 &&\ \ \leq b\varepsilon^{(Q+2\alpha)/Q}\int_{0}^{\gamma}\mu(\{x\in \mathbb{G}:\ (e^{-t\mathcal{L}^{\alpha}})^{*}\nu(x)>\lambda\})\lambda^{p-1}d\lambda\\
 &&\quad+\varepsilon^{-p}\int_{0}^{\varepsilon \gamma}\mu(\{x\in \mathbb{G}:\ M_{\alpha}\nu(x)>\lambda\})\lambda^{p-1}d\lambda.
 \end{eqnarray*}
 Let $\varepsilon$ be so small that $b\varepsilon^{(Q+2\alpha)/Q}\leq a^{-p}/2$ and $\gamma\to\infty$. Then
 $$
 a^{-p}\int_{\mathbb{G}}((e^{-t\mathcal{L}^{\alpha}})^{*}\nu(x))^{p}d\mu\leq 2\varepsilon^{-p}\int_{\mathbb{G}}(M_{\alpha}\nu(x))^{p}d\mu,
 $$
 that is,
 $$
 \|M_{\alpha}\nu\|_{L^p(\mathbb{G} )}\gtrsim \|(e^{-t\mathcal{L}^{\alpha}})^{*}\nu\|_{L^p(\mathbb{G} )}.
 $$
 \end{proof}

The second is about the Hedberg-Wolff potential for
$e^{-t\mathcal{L}^{\alpha}}$ on the stratified Lie group.

\begin{lemma}\label{l22}  Let $1<p<\infty$, $p'={p}/{(p-1)}$, and $\nu$ be a nonnegative Radon measure on $\mathbb{G}_+$. Then
$$
\|(e^{-t\mathcal{L}^{\alpha}})^{*}\nu\|_{L^{p'}(\mathbb{G})}^{p'}\approx
\int_{\mathbb{G}_{+}}P_{\alpha p}\nu\, d\nu,
$$
 where
\begin{equation}\label{eq6.2}
 P_{\alpha p} \nu(t,x)=\int_{0}^{\infty}\Big(\frac{\nu(B^{(\alpha)}_{r}(t, x))}{r^{Q}}\Big)^{p'-1}\frac{dr}{r} \,\,\,\,\forall\,\,(t,x)\in   \mathbb{G}_{+}.
\end{equation}
\end{lemma}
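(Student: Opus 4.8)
The displayed equivalence is a Hedberg--Wolff (nonlinear potential) inequality for the adjoint $(e^{-t\mathcal{L}^{\alpha}})^{*}$ in the parabolic setting on $\mathbb{G}$, and the plan is to prove it following the method of Wolff's inequality in \cite{AH}, transplanted to the parabolic dyadic cubes of Christ (Proposition \ref{prop5.8}). The first step is to dispose of the kernel: since $\mathbb{G}$ is Ahlfors--David regular with $\beta=\beta^{\star}=Q$, Propositions \ref{prop-1} and \ref{prop-4} combine into the two--sided estimate $K^{\mathcal{L}}_{\alpha,s}(x,y)\approx s\,(s^{1/2\alpha}+d(x,y))^{-(Q+2\alpha)}$, so that
\begin{equation*}
(e^{-t\mathcal{L}^{\alpha}})^{*}\nu(x)\approx\int_{\mathbb{G}_{+}}\frac{s}{(s^{1/2\alpha}+d(x,y))^{Q+2\alpha}}\,d\nu(s,y).
\end{equation*}
This turns the assertion into a Wolff inequality for an explicit parabolic Riesz--type kernel, for which I would use the Fubini identity
\begin{equation*}
\big\|(e^{-t\mathcal{L}^{\alpha}})^{*}\nu\big\|_{L^{p'}(\mathbb{G})}^{p'}=\int_{\mathbb{G}_{+}}\bigg(\int_{\mathbb{G}}K^{\mathcal{L}}_{\alpha,s}(x,y)\big[(e^{-t\mathcal{L}^{\alpha}})^{*}\nu(x)\big]^{p'-1}\,d\mu(x)\bigg)\,d\nu(s,y)
\end{equation*}
to reduce everything to comparing, for $\nu$--almost every $(s,y)$, the inner $x$--integral with $P_{\alpha p}\nu(s,y)$.

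For the lower bound I would split $\mathbb{G}$ into the disjoint dyadic annuli $A_{k}=\{x:2^{k-1}\le d(x,y)<2^{k}\}$ and, on each $A_{k}$, use the kernel lower bound of Proposition \ref{prop-4} together with the lower regularity $\mu(B(x,r))\gtrsim r^{Q}$ to bound $(e^{-t\mathcal{L}^{\alpha}})^{*}\nu(x)$ from below by the local average $r^{-Q}\nu(B^{(\alpha)}_{r}(s,y))$ at the matching scale $r=2^{k}$; summing the disjoint contributions over $k$ is designed to rebuild the integral $P_{\alpha p}\nu(s,y)$, and this step needs a careful multi--scale accounting of the competing factors $s\,(s^{1/2\alpha}+2^{k})^{-2\alpha}$ and $r^{-Q}\nu(B^{(\alpha)}_{r})$. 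For the upper bound I would pass to the parabolic dyadic cubes $Q^{k}_{\gamma}$ of Proposition \ref{prop5.8}, using the translation invariance of $\mathbb{G}$ so that this grid behaves like the Euclidean dyadic lattice, dominate $(e^{-t\mathcal{L}^{\alpha}})^{*}\nu(x)$ pointwise by a dyadic sum of the local averages $\nu(Q)/\ell(Q)^{Q}$ associated with the parabolic cubes $Q$ lying above $x$ (here $\ell(Q)$ is the side length), and then invoke the combinatorial Wolff inequality (the self--improving summation of \cite[Section~4.5]{AH}) to bound its $L^{p'}$--norm by $\int_{\mathbb{G}_{+}}P_{\alpha p}\nu\,d\nu$. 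As an alternative for one of the two inequalities, Lemma \ref{l21} permits replacing $(e^{-t\mathcal{L}^{\alpha}})^{*}\nu$ by the fractional parabolic maximal function $M_{\alpha}\nu$ and running a Carleson--type estimate on the same dyadic grid.

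I expect the upper bound to be the principal obstacle. Two points are delicate: first, the parabolic scaling (time $\sim r^{2\alpha}$, space $\sim r$) has to be tracked consistently so that the exponent $p'-1$ and the normalization $r^{-Q}$ appearing in $P_{\alpha p}\nu$ emerge exactly, and so that the telescoping over dyadic scales converges; second, distributing $\nu$ over the cubes $Q^{k}_{\gamma}$ and running the H\"older--and--sum iteration that converts the kernel sum into the Wolff potential requires the nesting together with the small--boundary property (items (b)--(f) of Proposition \ref{prop5.8}) and the translation invariance of the grid. Once this discrete inequality is in hand, transferring it back through the kernel equivalence of the first step is routine.
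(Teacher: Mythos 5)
Your skeleton — two-sided kernel bounds from {\bf (A4)}, the Fubini identity
$\|(e^{-t\mathcal{L}^{\alpha}})^{*}\nu\|_{L^{p'}(\mathbb{G})}^{p'}
=\int_{\mathbb{G}_{+}}\big(\int_{\mathbb{G}}K^{\mathcal{L}}_{\alpha,t}(x,y)\,
[(e^{-t\mathcal{L}^{\alpha}})^{*}\nu(x)]^{p'-1}d\mu(x)\big)d\nu(t,y)$,
and the lower bound obtained by restricting to the matching parabolic scale, using Proposition \ref{prop-4} to bound the inner potential by $r^{-Q}\nu(B^{(\alpha)}_{r}(t,y))$, and summing over dyadic scales to rebuild $P_{\alpha p}\nu$ — coincides with the paper's Part 2 almost verbatim, so that half is fine. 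The genuine gap is in the upper bound, which you correctly flag as the principal obstacle but then outsource to "the combinatorial Wolff inequality (the self-improving summation of \cite[Section 4.5]{AH})". That theorem cannot be invoked as a black box here: it concerns a Riesz potential and a measure living on the \emph{same} $\mathbb{R}^{n}$, discretized by the exact translation- and dilation-invariant dyadic lattice, whereas in the present problem the potential $(e^{-t\mathcal{L}^{\alpha}})^{*}\nu$ is a function on $\mathbb{G}$ while $\nu$ and $P_{\alpha p}\nu$ live on the space-time domain $\mathbb{G}_{+}$, and Christ's cubes enjoy only the approximate properties (b)--(f) of Proposition \ref{prop5.8} (no exact translation invariance; even in Theorem \ref{t4aa} the paper must introduce the translated grids $\mathcal{D}^{\alpha}_{\tau}$ and argue separately). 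So the "H\"older-and-sum iteration" you appeal to is precisely what would have to be proved from scratch; your proposal defers the hard step rather than supplying it.

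What the paper actually does for the upper bound is much lighter and needs no dyadic machinery at all. It is your one-sentence "alternative", made quantitative: first the pointwise bound
$M_{\alpha}\nu(x)\lesssim\big(\int_{0}^{\infty}\big(\nu(B^{(\alpha)}_{s}(s^{2\alpha},x))/s^{Q}\big)^{p'}\tfrac{ds}{s}\big)^{1/p'}$,
so that by Lemma \ref{l21} it suffices to prove
\begin{equation*}
\int_{\mathbb{G}}\int_{0}^{\infty}\Big(\frac{\nu(B^{(\alpha)}_{r}(r^{2\alpha},x))}{r^{Q}}\Big)^{p'}\frac{dr}{r}\,d\mu
\lesssim\int_{\mathbb{G}_{+}}\int_{0}^{\infty}\Big(\frac{\nu(B^{(\alpha)}_{r}(t,x))}{r^{Q}}\Big)^{p'-1}\frac{dr}{r}\,d\nu.
\end{equation*}
The missing idea is not a Carleson-type estimate on a grid but the elementary "integration by parts in the measure": write
$\nu(B)^{p'}=\nu(B)\cdot\nu(B)^{p'-1}=\int_{B}\nu(B)^{p'-1}d\nu$,
replace the ball centered spatially at $x$ by the comparable ball centered at $y$ when $(s,y)\in B^{(\alpha)}_{r}(r^{2\alpha},x)$ (so $d(x,y)<r$), and then integrate in $x$ using that $\mu(\{x:(s,y)\in B^{(\alpha)}_{r}(r^{2\alpha},x)\})\lesssim r^{Q}$; one application of Fubini in $(x,r)$ and one in $(x,(s,y))$ then produce exactly the Wolff potential on the right-hand side. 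If you supply that short Fubini computation in place of the transplanted Adams--Hedberg dyadic argument, your plan becomes the paper's proof; as written, the route you designate as primary rests on a citation that does not apply to this space-time, Christ-cube setting.
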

\begin{proof} We prove this lemma from two aspects.

{\it Part 1}. The first task is to show
$$
\|(e^{-t\mathcal{L}^{\alpha}})^{*}\nu\|_{L^{p'}(\mathbb{G})}^{p'}\lesssim
\int_{\mathbb{G}_{+}}P_{\alpha p}\nu\,d\nu.
$$
Note first that
\begin{eqnarray*}
\frac{\nu(B^{(\alpha)}_{r}(r^{2\alpha}, x))}{r^{Q}}&\approx&
\Big(\int_{r}^{2r}\Big(\frac{\nu(B^{(\alpha)}_{s}(s^{2\alpha},
x))}{s^{Q}}\Big)^{p'}\frac{ds}{s}\Big)^{{1}/{p'}}\\
&\lesssim&
\Big(\int_{0}^{\infty}\Big(\frac{\nu(B^{(\alpha)}_{s}(s^{2\alpha},
x))}{s^{Q}}\Big)^{p'}\frac{ds}{s}\Big)^{{1}/{p'}}.
\end{eqnarray*}
Therefore, one has
$$
M_{\alpha}\nu(x)\lesssim
\Big(\int_{0}^{\infty}\Big(\frac{\nu(B^{(\alpha)}_{s}(s^{2\alpha},
x))}{s^{Q}}\Big)^{p'}\frac{ds}{s}\Big)^{{1}/{p'}}.
$$
By Lemma \ref{l21}, it is sufficient to verify
$$
\int_{\mathbb{G}}\int_{0}^{\infty}\Big(\frac{\nu(B^{(\alpha)}_{r}(r^{2\alpha},
x))}{r^{Q}}\Big)^{p'}\frac{dr}{r}d\mu\lesssim
\int_{\mathbb{G}_{+}}\int_{0}^{\infty}\Big(\frac{\nu(B^{(\alpha)}_{r}(t,
x))}{r^{Q}}\Big)^{p'-1}\frac{dr}{r}d\nu.
$$
Using the Fubini theorem, one has
$$
\int_{
\mathbb{G}}\int_{0}^{\infty}\Big(\frac{\nu(B^{(\alpha)}_{r}(r^{2\alpha},
x))}{r^{Q}}\Big)^{p'}\frac{dr}{r}d\mu=\int_{0}^{\infty}\int_{
\mathbb{G}}\frac{\nu(B^{(\alpha)}_{r}(r^{2\alpha},
x))^{p'}}{r^{Q p'}}d\mu\frac{dr}{r}.
$$
A further application of Fubini's theorem yields
\begin{align*}
&\int_{\mathbb{G}}\nu(B^{(\alpha)}_{r}(r^{2\alpha}, x))^{p'}d\mu\\
&\ \lesssim\int_{B^{(\alpha)}_{r}(r^{2\alpha}, x)}\int_{\mathbb{G}}\nu(B^{(\alpha)}_{r}(r^{2\alpha}, x))^{p'-1}d\mu d\nu\\
 &\ \lesssim \int_{B^{(\alpha)}_{r}(r^{2\alpha}, x)}\int_{\mathbb{G}}\nu(B^{(\alpha)}_{r}(r^{2\alpha}, y))^{p'-1}d\mu d\nu\\
 &\ \lesssim r^{Q}\int_{B^{(\alpha)}_{r}(r^{2\alpha}, x)}\nu(B^{(\alpha)}_{r}(r^{2\alpha}, y))^{p'-1}d\nu.
\end{align*}
Therefore,
\begin{align*}
\int_{0}^{\infty}\int_{\mathbb{G}}\frac{\nu(B^{(\alpha)}_{r}(r^{2\alpha},
x))^{p'}}{r^{Q  p'}}d\mu\frac{dr}{r}&\approx
\int_{0}^{\infty}\int_{B^{(\alpha)}_{r}(r^{2\alpha}, x)}\frac{\nu(B^{(\alpha)}_{r}(r^{2\alpha}, y))^{p'-1}}{r^{ Q (p'-1)}}d\nu\frac{dr}{r}\\
&\approx \int_{B^{(\alpha)}_{r}(r^{2\alpha}, x)}\int_{0}^{\infty}\Big(\frac{\nu(B^{(\alpha)}_{r}(r^{2\alpha}, y))}{r^{Q}}\Big)^{p'-1}\frac{dr}{r}d\nu\\
&\lesssim \int_{
\mathbb{G}_{+}}\Big(\int_{0}^{\infty}\Big(\frac{\nu(B^{(\alpha)}_{r}(t,
x))}{r^{Q}}\Big)^{p'-1}\frac{dr}{r}\Big)d\nu(t,x),
\end{align*}
as desired.

{\it Part 2.} The second task is to prove
 $$
 \|(e^{-t\mathcal{L}^{\alpha}})^{*}\nu\|_{L^{p'}(\mathbb{G})}^{p'}\gtrsim \int_{\mathbb{G}_{+}}P_{\alpha p}\nu\,d\nu.
 $$

Note that
\begin{align*}
\|(e^{-t\mathcal{L}^{\alpha}})^{*}\nu\|_{L^{p'}(\mathbb{G})}^{p'}&=\int_{\mathbb{G}}
\big((e^{-t\mathcal{L}^{\alpha}})^{*}\nu(x)\big)^{p'-1}((e^{-t\mathcal{L}^{\alpha}})^{*}\nu(x))d\mu(x)\\
&=\int_{\mathbb{G}_{+}}\int_{
\mathbb{G}}((e^{-t\mathcal{L}^{\alpha}})^{*}\nu(x))^{p'-1}K^{\mathcal{L}}_{\alpha,t}(x,y)d\mu(x)\,d\nu(t,y).
\end{align*}
Upon writing
$$
\begin{cases}
K(t,y)=\int_{
\mathbb{G}}((e^{-t\mathcal{L}^{\alpha}})^{*}\nu(x))^{p'-1}K^{\mathcal{L}}_{\alpha,t}(x,y)d\mu(x);\\
B(y,2^{-m})=\{y\in \mathbb{G}:\ d(x,y)<2^{-m};\,
(2^{-m})^{2\alpha}<t<2(2^{-m})^{2\alpha}\}\\
\ \forall\ m\in\mathbb
Z\equiv\{0,\pm 1,\pm 2,...\},
\end{cases}
$$ and using Proposition \ref{prop-4},
we obtain
\begin{align*}
K(t,y)&\gtrsim \int_{\mathbb{G}}\frac{t}{(t^{{1}/{2\alpha}}+d(x,y))^{Q+2\alpha}}\Big(\int_{\mathbb{M}_{+}}\frac{s}{(s^{{1}/{2\alpha}}+d(x,z))^{Q +2\alpha}}d\nu\Big)^{p'-1}d\mu\\
&\gtrsim \sum_{m\in \mathbb{Z}}\int_{B(y,2^{-m})}t^{-{Q
}/{2\alpha}}
\Big(\int_{B^{(\alpha)}_{2^{-m}}(t, y)}s^{-{Q}/{2\alpha}}d\mu\Big)^{p'-1}d\mu\\
&\gtrsim \sum_{m\in \mathbb{Z}}\int_{B(y,2^{-m})}{2^{m Q }}
\Big(\frac{\nu(B^{(\alpha)}_{2^{-m}}(t, y))}{2^{-m Q }}\Big)^{p'-1}d\mu\\
&\gtrsim \int_{0}^{\infty} \Big(\frac{\nu(B^{(\alpha)}_{r}(t,
y))}{r^{Q}}\Big)^{p'-1}\frac{dr}{r},
\end{align*}
thereby reaching the required inequality.

\end{proof}

\begin{remark}
Under the assumption that $\mu$ satisfies (\ref{eq1.2}), Lemmas \ref{l21}\ \&\ \ref{l22} still hold for the metric measure space $(\mathbb{M}, d,\mu)$.
\end{remark}

The following theorem is the second main result in this  section.
\begin{theorem}
 \label{t4aa} Let $1<q<p<\infty$ and $\nu\in\mathcal M_+( \mathbb{G}_+)$. Then the follows are equivalent:

 $\mathrm{(i)}$ The extension $e^{-t{\mathcal{L}}^{\alpha}}: L^p(\mathbb{ G})\mapsto L^{q}(  \mathbb{G}_+,\nu)$ is bounded.

 $\mathrm{(ii)}$  $$P_{\alpha p} \nu \in
 L_\nu^{q(p-1)/(p-q)}( \mathbb{G}_{+}),$$ where
 $
 P_{\alpha p}
 $ is defined in (\ref{eq6.2}).
 \end{theorem}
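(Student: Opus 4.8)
The plan is to dualize the extension inequality and then transfer the whole problem onto the Hedberg--Wolff potential by means of Lemma \ref{l22}. Since $1<q<p<\infty$, the duality of $L^p(\mathbb{G})$ with $L^{p'}(\mathbb{G})$ ($p'=p/(p-1)$) and of $L^q(\mathbb{G}_+,\nu)$ with $L^{q'}(\mathbb{G}_+,\nu)$ ($q'=q/(q-1)$), together with the positivity of the kernel $K^{\mathcal{L}}_{\alpha,t}$, shows that (i) is equivalent to the boundedness of the adjoint, namely
$$\big\|(e^{-t\mathcal{L}^{\alpha}})^{*}(g\,d\nu)\big\|_{L^{p'}(\mathbb{G})}\lesssim\|g\|_{L^{q'}(\mathbb{G}_+,\nu)}\qquad\forall\ 0\le g\in L^{q'}(\mathbb{G}_+,\nu).$$
The proof of Lemma \ref{l22} uses nothing about $\nu$ beyond its being a nonnegative Radon measure, so applying it to $g\,d\nu$ yields
$$\big\|(e^{-t\mathcal{L}^{\alpha}})^{*}(g\,d\nu)\big\|_{L^{p'}(\mathbb{G})}^{p'}\approx\int_{\mathbb{G}_+}P_{\alpha p}(g\,d\nu)\,g\,d\nu.$$
Hence (i) is equivalent to the nonlinear testing inequality
$$\int_{\mathbb{G}_+}P_{\alpha p}(g\,d\nu)\,g\,d\nu\lesssim\|g\|_{L^{q'}(\mathbb{G}_+,\nu)}^{p'}\qquad\forall\ 0\le g\in L^{q'}(\mathbb{G}_+,\nu),$$
which I will call $(\sharp)$, and the theorem reduces to proving that $(\sharp)$ holds if and only if $P_{\alpha p}\nu\in L_\nu^{s}(\mathbb{G}_+)$ with $s=q(p-1)/(p-q)$.

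To analyse $(\sharp)$ I would pass to a dyadic model adapted to $\mathbb{G}$. Using the Christ cubes of Proposition \ref{prop5.8} and the lower bound $K^{\mathcal{L}}_{\alpha,t}(x,y)\gtrsim r^{-Q}$ on $B^{(\alpha)}_r$ coming from Proposition \ref{prop-4} (the same ball-to-cube discretisation already exploited in Lemmas \ref{l21}\ \&\ \ref{l22}), I would replace the parabolic balls $B^{(\alpha)}_r(t,x)$ in the definition of $P_{\alpha p}$ by parabolic dyadic cubes $\cq$ of scale $\ell(\cq)$, obtaining
$$P_{\alpha p}(g\,d\nu)(t,x)\approx\sum_{\cq\ni(t,x)}\Big(\frac{\sigma(\cq)}{\ell(\cq)^{Q}}\Big)^{p'-1},\qquad \sigma:=g\,d\nu,\ \ \sigma(\cq):=\int_{\cq}g\,d\nu.$$
With $a_{\cq}:=\ell(\cq)^{-Q(p'-1)}$ this turns the left-hand side of $(\sharp)$ into the dyadic energy $\sum_{\cq}a_{\cq}\,\sigma(\cq)^{p'}$, whereas the requirement $P_{\alpha p}\nu\in L^s_\nu$ becomes $\int_{\mathbb{G}_+}\big(\sum_{\cq\ni(t,x)}a_{\cq}\,\nu(\cq)^{p'-1}\big)^{s}\,d\nu<\infty$. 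In this way the whole question is recast as a statement about the positive dyadic (tree) operator with weights $a_{\cq}$.

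The necessity of (ii) is the lighter half: assuming $(\sharp)$, I would test the dyadic energy inequality with $g$ equal to a power of the (truncated) dyadic Wolff potential, namely $g\approx(\sum_{\cq\ni\cdot}a_{\cq}\nu(\cq)^{p'-1})^{s/q'}$ over a finite family of cubes, and then exhaust $\mathbb{G}_+$; since this choice gives $\|g\|_{L^{q'}(\nu)}^{q'}=\int(P_{\alpha p}\nu)^{s}\,d\nu$, a stopping-time decomposition of the super-level sets of $\sum_{\cq\ni\cdot}a_{\cq}\nu(\cq)^{p'-1}$ converts $(\sharp)$ into the bound $\int(P_{\alpha p}\nu)^{s}\,d\nu\lesssim1$. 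The main obstacle is the converse implication $(\mathrm{ii})\Rightarrow(\sharp)$, where one must deduce from the finiteness of the single number $\int(P_{\alpha p}\nu)^{s}\,d\nu$ the uniform energy bound $\sum_{\cq}a_{\cq}\sigma(\cq)^{p'}\lesssim\|g\|_{L^{q'}(\nu)}^{p'}$ for every admissible $g$. I expect to settle this by a discrete nonlinear Wolff-type inequality on the tree: group the cubes according to the dyadic size of the partial potentials $\sum_{\cq'\subseteq\cq,\ \cq'\ni\cdot}a_{\cq'}\sigma(\cq')^{p'-1}$, apply Hölder with the exponents $p'$ and $q'$ scale by scale, and sum the resulting geometric series, the exponent $s=q(p-1)/(p-q)$ being precisely the one that balances the two summations. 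Transferring this dyadic equivalence back through the comparison above, and then through the reduction $(\sharp)\Leftrightarrow(\mathrm{i})$, completes the proof.
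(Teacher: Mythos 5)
Your opening reduction is the same as the paper's: dualize the extension so that (i) becomes $\|(e^{-t\mathcal{L}^{\alpha}})^{*}(g\,d\nu)\|_{L^{p'}(\mathbb{G})}\lesssim\|g\|_{L^{q'}_\nu(\mathbb{G}_+)}$, and then apply Lemma \ref{l22} with $g\,d\nu$ in place of $d\nu$ to turn this into the nonlinear testing inequality $(\sharp)$. The genuine gap comes immediately after, when you claim the pointwise equivalence
$$P_{\alpha p}(g\,d\nu)(t,x)\approx\sum_{\cq\ni(t,x)}\Big(\frac{\sigma(\cq)}{\ell(\cq)^{Q}}\Big)^{p'-1}$$
for a single fixed grid of Christ cubes and transfer the entire problem to that dyadic model. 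Only one direction of this is true in general: a dyadic cube of scale $\delta^{k}$ containing $(t,x)$ is contained in a parabolic ball of comparable radius, so the dyadic sum is dominated by the continuous potential. The reverse inequality fails for a general (possibly non-doubling) Radon measure, and $\nu$ in the theorem is arbitrary. Concretely, let $\sigma$ be a point mass at a point $y$ and let $(t,x)$ lie at distance $\epsilon$ from $y$ but on the other side of a cube boundary that persists through all generations finer than scale $1$ (the standard $\tfrac12-\epsilon$ versus $\tfrac12+\epsilon$ configuration): the continuous potential at $(t,x)$ is of order $\epsilon^{-Q(p'-1)}$, while every dyadic cube containing both points has scale $\gtrsim 1$, so your dyadic sum is $O(1)$. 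This comparison is load-bearing in both halves of your plan: your necessity argument only yields membership of the \emph{dyadic} potential in $L^{q(p-1)/(p-q)}_\nu$, which does not give (ii) without the false direction, and your sufficiency argument starts from a dyadic hypothesis but must conclude the \emph{continuous} testing inequality $(\sharp)$, which again needs it.

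This is exactly the difficulty the paper's proof is organized around. For (i)$\Rightarrow$(ii) it first proves the bound for the dyadic potential $P^{d}_{\alpha p}\nu$ (testing with $\mathbf{g}=(M_{\nu}^{d}h)^{1/p'}$ and dualizing), and then passes to the continuous potential either by assuming $\nu$ doubling (Case 1.1, enlarged cubes) or, in the non-doubling case, by proving the uniform bound (\ref{2.7}) over all left-translated dyadic families $\mathcal{D}^{\alpha}_{\tau}$ and averaging in $\tau$; your proposal contains no such translation or random-grid device, and without it the step from dyadic to continuous is simply unavailable. For (ii)$\Rightarrow$(i) the paper avoids discretization altogether: it uses the pointwise bound $P_{\alpha p}(\mathbf{g}\,d\nu)\lesssim(M_{\nu}\mathbf{g})^{p'-1}P_{\alpha p}\nu$, H\"older's inequality, and the $L^{q'}_\nu$-boundedness of the centered maximal function $M_{\nu}$ — a much shorter route than the scale-by-scale tree Wolff argument you sketch, which as stated ("group the cubes, apply H\"older scale by scale, sum the geometric series") is too vague to verify. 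To repair your proof you would need either to restrict to doubling $\nu$ or to build translation-averaging into the dyadic model at the outset, and to write out the tree estimate in full.
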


 \begin{proof}
Firstly,  we show that  $\mathrm{(i)}\Rightarrow \mathrm{(ii)}$. To
do so, we first define   the $\alpha$-dyadic cube, which is denoted
by $Q^{(\alpha)}_{\delta,k,\gamma}$,  on the stratified Lie group as
follows:
$$
Q^{(\alpha)}_{\delta,k,\gamma}\equiv [k_{0}\delta^{2\alpha},
(k_{0}+1)\delta^{2\alpha})\times Q^k_{\gamma}
\,\,\,\,\hbox{as}\,\,\,\,k_{0}\in \mathbb{Z}_{+}, \  k_{i}\in
\mathbb{Z}\  \mathrm{and}\ \gamma\in I_k.
$$
It follows from Proposition \ref{prop5.8} that the family
$\{Q^{(\alpha)}_{\delta,k,\gamma}\}$ is dense in $ \mathbb{G}_+$.
  Next, we introduce the following fractional
heat Hedberg-Wolff potential   generated by $\mathcal{D}^{\alpha}$-
the family of all   above-defined $\alpha$-dyadic cubes in $
\mathbb{G}_+$:
$$
P_{\alpha
p}^{d}\nu(t,x)=\sum_{Q^{(\alpha)}_{\delta,k,\gamma}\in\mathcal{D}^{\alpha}}\Big(\frac{\nu(Q^{(\alpha)}_{\delta,k,\gamma})}{\delta^{k
Q}}\Big)^{p'-1}\textbf{1}_{Q^{(\alpha)}_{\delta,k,\gamma}}(t,x).
$$

Then we can show that
\begin{equation}\label{2.5}
\mathrm{(i)}\Rightarrow\int_{ \mathbb{G}_{+}}(P_{\alpha
p}^{d}\nu(t,x))^{{q(p-1)}/{(p-q)}}d\nu(t,x)<\infty.
\end{equation}

Indeed, by duality, $(\mathrm{i})$ is equivalent to the following
inequality
$$
\| (e^{-t \mathcal{L}^{\alpha}})^{*}(\mathbf{g}d\nu)\|_{L^{p'}( \mathbb{G}
)}^{p'}\lesssim \|\mathbf{g}\|_{L_\nu^{q'}(
\mathbb{G}_{+})}^{p'}\,\,\,\,\forall \,\,\mathbf{g}\in
L_\nu^{q'={q}/{(q-1)}}( \mathbb{G}_{+}).
$$
It is easy to check that Lemma \ref{l22} is also true with
$P_{\alpha p}^{d}\nu$ in place of $P_{\alpha p}\nu$ and
$\mathbf{g}d\nu$ in place of $d\nu$. So, one has
\begin{eqnarray*}
\|(e^{-t \mathcal{L}^{\alpha}})^{*}(\mathbf{g}d\nu)\|_{L^{p'}( \mathbb{G}
)}^{p'}
&\gtrsim& \int_{ \mathbb{G}_{+}}P_{\alpha
p}^{d}(\mathbf{g}d\nu)(t,x)\mathbf{g}(t,x)d\nu(t,x)\\
&\gtrsim&
\sum_{Q^{(\alpha)}_{\delta,k,\gamma}}\left(\frac{\int_{Q^{(\alpha)}_{\delta,k,\gamma}}\mathbf{g}(t,x)d\nu(t,x)}{\delta^{k
Q}}\right)^{p'}\delta^{k Q}.
\end{eqnarray*}
Consequently,
\begin{equation}\label{2.6}
\sum_{Q^{(\alpha)}_{\delta,k,\gamma}}\left(\frac{\int_{Q^{(\alpha)}_{\delta,k,\gamma}}\mathbf{g}(t,x)d\nu(t,x)}{\delta^{k
Q}}\right)^{p'}\delta^{k Q}\lesssim \|\mathbf{g}\|_{L_\nu^{q'}(
\mathbb{G}_{+})}^{p'}.
\end{equation}
Upon setting
$$
\lambda_{Q^{(\alpha)}_{\delta,k,\gamma}}=\left(\frac{\nu(Q^{(\alpha)}_{\delta,k,\gamma})}{\delta^{k
Q}}\right)^{p'}\delta^{k Q},
$$
one finds that $(\ref{2.6})$ is equivalent to
$$
\sum_{Q^{(\alpha)}_{\delta,k,\gamma}}\lambda_{Q^{(\alpha)}_{\delta,k,\gamma}}
\left(\frac{\int_{Q^{(\alpha)}_{\delta,k,\gamma}}\mathbf{g}\,d\nu}{\nu(Q^{(\alpha)}_{\delta,k,\gamma})}\right)^{p'}\lesssim
\|\mathbf{g}\|_{L_\nu^{q'}( \mathbb{G}_{+})}^{p'}.
$$
Define the following dyadic Hardy-Littlewood maximal function
$$
M_{\nu}^{d}h(t,x)=\sup_{(t,x)\in
Q^{(\alpha)}}\frac{1}{\nu(Q^{(\alpha)})}\int_{Q^{(\alpha)}}|h(s,y)|d\nu(s,y)\,\,\,\,\forall\,\,Q^{(\alpha)}\in
\mathcal{D}^{\alpha}.
$$
Then $M_{\nu}^{d}$ is bounded on $L^{p}_\nu( \mathbb{G}_{+})$ for
$1<p<\infty$ (cf. \cite{Aimar}). Writing
$$
\mathbf{g}(t,x)=(M_{\nu}^{d}h)^{{1}/{p'}}(t,x) \ \hbox{under}\
0\le h\in L_\nu^{{q'}/{p'}}(\mathbb{G}_{+}).
$$
It is easy to check that
$$
\left(\frac{\int_{Q^{(\alpha)}_{\delta,k,\gamma}}\mathbf{g}(t,x)d\nu(t,x)}{\nu(Q^{(\alpha)}_{\delta,k,\gamma})}\right)^{p'}\gtrsim
\frac{\int_{Q^{(\alpha)}_{\delta,k,\gamma}}h(t,x)d\nu(t,x)}{\nu(Q^{(\alpha)}_{\delta,k,\gamma})}
$$
and so that
$$
\|\mathbf{g}\|_{L_\nu^{q'}(\mathbb{G}_{+})}^{p'}\lesssim
\|h\|_{L_\nu^{{q'}/{p'}}( \mathbb{G}_{+})}.
$$
This in turn implies
$$
\sum_{Q^{(\alpha)}_{\delta,k,\gamma}}\lambda_{Q^{(\alpha)}_{\delta,k,\gamma}}
\frac{\int_{Q^{(\alpha)}_{\delta,k,\gamma}}h(t,x)d\nu(t,x)}{\nu(Q^{(\alpha)}_{\delta,k,\gamma})}\lesssim
\|h\|_{L_\nu^{{q'}/{p'}}( \mathbb{G}_{+})} \ \forall\ h\in L_\nu^{{q'}/{p'}}(
\mathbb{G}_{+}),
$$
and thus via duality,
$$
\sum_{Q^{(\alpha)}_{\delta,k,\gamma}}
\frac{\lambda_{Q^{(\alpha)}_{\delta,k,\gamma}}}{\nu(Q^{(\alpha)}_{\delta,k,\gamma})}\textbf{1}_{Q^{(\alpha)}_{\delta,k,\gamma}}\in
L_\nu^{{q'}/{(q'-p')}}(\mathbb{G}_{+}),
$$
namely,
$$
\sum_{Q^{(\alpha)}_{\delta,k,\gamma}}\left(\frac{\nu(Q^{(\alpha)}_{\delta,k,\gamma})}{\delta^{k
Q}}\right)^{p'-1}\textbf{1}_{Q^{(\alpha)}_{\delta,k,\gamma}}\in
L_\nu^{{q(p-1)}/{(p-q)}}(\mathbb{G}_{+}),
$$
which yields \eqref{2.5}.

Next, set
$$
\begin{cases}
P_{\alpha
p}^{d,\tau}\nu(t,x)=\sum_{Q^{(\alpha)}_{\delta,k,\gamma}\in\mathcal{D}^{\alpha}_{\tau}}\left(\frac{\nu(Q^{(\alpha)}_{\delta,k,\gamma})}{\delta^{k
Q}}\right)^{p'-1}\textbf{1}_{Q^{(\alpha)}_{\delta,k,\gamma}}(t,x);\\
\mathcal{D}^{\alpha}_{\tau}=\{\mathcal{F}_\tau(Q^{(\alpha)'}_{l})\}_{Q^{(\alpha)'}_{l}\in
\mathcal{D}^{\alpha}},
\end{cases}
$$
where  $\mathcal{F}_\tau: \mathbb{G}\rightarrow \mathbb{G}$ is a
  left translation. It is easy to see that the map $\mathcal{F}_\tau$ can ensure
$\mathcal{F}_\tau(Q^{(\alpha)'}_{l})$ is still an $\alpha$-dyadic
cube. Then \eqref{2.5} implies
\begin{equation}\label{2.7}
\sup_{\tau\in  \mathbb{G}_{+}}\int_{ \mathbb{G}_{+}}\big(P_{\alpha
p}^{d,\tau}\nu(t,x)\big)^{{q(p-1)}/{(p-q)}}d\nu(t,x)<\infty.
\end{equation}
Now, it remains to prove
$$
P_{\alpha p}\nu \in L_\nu^{q(p-1)/(p-q)}(\mathbb{G}_{+}).
$$
Two situations are considered in the sequel.

{\it Case 1.1. $\nu$ is a doubling measure.} In this case,
$P_{\alpha p} \nu \in L_\nu^{q(p-1)/(p-q)}(\mathbb{G}_{+})$ is a by-product
of $(\ref{2.5})$ and the following observation
$$
P_{\alpha p}\nu(t,x)\lesssim
\sum_{Q^{(\alpha)}_{\delta,k,\gamma}}\left(\frac{\nu(Q^{(\alpha)*}_{\delta,k,\gamma})}{\delta^{k
Q}}\right)^{p'-1}\textbf{1}_{Q^{(\alpha)}_{\delta,k,\gamma}}(t,x),
$$
where $Q^{(\alpha)*}_{\delta,k,\gamma}$ is the cube with the same
center as $Q^{(\alpha)}_{\delta,k,\gamma}$ and side length two times
as $Q^{(\alpha)}_{\delta,k,\gamma}$.

{\it Case 1.2. $\nu$ is a possibly non-doubling measure.} For any
$\rho>0$, write
$$
P_{\alpha
p,\rho}\nu(t,x)=\int_{0}^{\rho}\Big(\frac{\nu(B_{r}^{(\alpha)}(t,x))}{r^{
Q}}\Big)^{p'-1}\frac{dr}{r}.
$$
Then
\begin{equation*}\label{2.8}
P_{\alpha p,\rho}\nu(t,x)\lesssim \rho^{-(
 Q+1)}\int_{|\tau|\lesssim\rho}P_{\alpha
p}^{d,\tau}\nu(t,x)d\tau.
\end{equation*}
In fact, for a fixed $x\in  \mathbb{G }$ and $\rho>0$ with
$2^{i-1}\eta\leq \rho< 2^{i}\eta$, one has
$$
P_{\alpha p,\rho}\nu(t,x)\lesssim
\sum_{j=-\infty}^{i}\Big(\frac{\nu(B_{2^{j}\eta}^{(\alpha)}(t,x))}{(2^{j}\eta)^{
Q}}\Big)^{p'-1},
$$   where $i\in\mathbb{Z}$ and
$\eta>0$ will be determined later.  For $j\leq i$, let
$Q^{(\alpha)}_{\delta,j}$ be a cube centered at $x$ with
$2^{j-1}<\delta^k\leq2^{j}$. Then
$B_{2^{j}\eta}^{(\alpha)}(t,x)\subseteq Q^{(\alpha)}_{\delta,j}$ for
sufficiently small $\eta$. Assume not only that $E$ is the set of
all points $\tau\in  \mathbb{G}_{+}$ enjoying $|\tau|\lesssim\rho$,
but also that there exists $Q^{(\alpha),\tau}_{\delta}\in
\mathcal{D}^{\alpha}_{\tau}$ satisfying $\delta^k=2^{j+1}$ and
$Q^{(\alpha)}_{\delta,j}\subseteq Q^{(\alpha),\tau}_{\delta}$. A
geometric consideration produces a dimensional constant $c( Q)>0$
such that $$\widetilde{\mu}(E)\ge c( Q)\rho^{  Q+1}\ \forall\ j\leq
i,$$ where $\widetilde{\mu}$ is the product measure of $\mu$ and the
Lebesgue measure on $(0,\infty)$. Consequently, one has
\begin{align*}
\nu(B_{2^{j}\eta}^{(\alpha)}(t,x))^{p'-1}&\lesssim \widetilde{\mu}(E)^{-1}\int_{E}\sum_{l=2^{j+1}}\nu(Q^{(\alpha),\tau}_{l})^{p'-1}\textbf{1}_{Q^{(\alpha),\tau}_{\delta}}(t,x)d\widetilde{\mu}\\
&\lesssim \rho^{-(  Q+1)}\int_{|\tau|\lesssim
\rho}\sum_{l=2^{j+1}}\nu(Q^{(\alpha),\tau}_{l})^{p'-1}\textbf{1}_{Q^{(\alpha),\tau}_{l}}(t,x)d\widetilde{\mu}
\end{align*}
and so that
\begin{align*}
P_{\alpha p,\rho}\nu(t,x)&\lesssim \rho^{-(  Q+1)}\int_{|\tau|\lesssim\rho} \sum_{j=-\infty}^{i}\sum_{l=2^{j+1}}\Big(\frac{\nu(Q^{(\alpha),\tau}_{l})}{(2^{j}\eta)^{ Q}}\Big)^{p'-1}\textbf{1}_{Q^{(\alpha),\tau}_{l}}(t,x)ds\\
&\lesssim \rho^{-(  Q+1)}\int_{|\tau|\lesssim\rho}P_{\alpha
p}^{d,\tau,R}\nu(t,x)d\tau,
\end{align*}
whence reaching $(\ref{2.8})$.

Combining $(\ref{2.8})$ and the H\"{o}lder inequality with Fubini's
theorem, we can assert that
\begin{eqnarray*}
&&\int_{\mathbb{G}_{+}}\Big(P_{\alpha p,\rho}\nu(t,x)\Big)^{q(p-1)/(p-q)}d\nu(t,x)\\
&&\ \ \lesssim \int_{ \mathbb{G}_{+}} \Big[\rho^{-(  Q+1)}\Big(\int_{|\tau|\leq C\rho} \Big(P_{\alpha p}^{d,\tau}\nu\Big)^{{q(p-1)}/{(p-q)}}d\tau\Big)^{{(p-q)}/{q(p-1)}}\\
&&\quad\times\Big(\int_{|\tau|\lesssim\rho}d\tau\Big)^{1-{(p-q)}/{q(p-1)}}\Big]^{{q(p-1)}/{(p-q)}}d\mu\\
&&\ \  \lesssim\rho^{-(  Q+1)}\int_{|\tau|\lesssim\rho}\Big( \int_{ \mathbb{G}_{+}}\Big(P_{\alpha p}^{d,\tau}\nu\Big)^{{q(p-1)}/(p-q)}d\mu\Big) d\tau\\
&&\ \ \le \kappa( Q),
\end{eqnarray*}
where the last constant $\kappa( Q)$ is independent of $\rho$. This
clearly produces
$$
P_{\alpha p}\nu \in L_\nu^{q(p-1)/(p-q)}(\mathbb{G}_{+})
$$
via letting $\rho\rightarrow \infty$ and utilizing the monotone
convergence theorem.

{\it Step 2.} We prove
$$P_{\alpha p}\nu \in L_\nu^{q(p-1)/(p-q)}(\mathbb{G}_{+})\Rightarrow (\mathrm{i}).$$

Recall that (i) is equivalent to the following inequality
$$
\|(e^{-t \mathcal{L}^{\alpha}})^{*}(\mathbf{g}d\nu)\|_{L^{p'}( \mathbb{G}
)}\lesssim \|\mathbf{g}\|_{L_\nu^{q'}(
\mathbb{G}_{+})}\,\,\,\,\forall\,\,\mathbf{g}\in L_\nu^{q'}( \mathbb{G}_{+}).
$$
Thus, by Lemma \ref{l22}, it is sufficient to check that $P_{\alpha
p}\nu \in L_\nu^{q(p-1)/(p-q)}( \mathbb{G}_{+})$ implies
\begin{equation}\label{2.9}
\int_{ \mathbb{G}_{+}}P_{\alpha
p}(\mathbf{g}d\nu)(t,x)\mathbf{g}(t,x)d\mu\lesssim
\|\mathbf{g}\|_{L^{q'}_{\nu}(
\mathbb{G}_{+})}^{p'}\,\,\,\,\forall\,\,\mathbf{g}\in L_\nu^{q'}(
\mathbb{G}_{+}).
\end{equation}
There is no loss of generality in assuming $\mathbf{g}\geq 0$. Since
\begin{align*}
P_{\alpha p}(\mathbf{g}d\nu)(t,x)
&\approx\int_{0}^{\infty} \Big(\frac{\nu(B_{r}^{(\alpha)}(t,x))}{r^{ Q}}\Big)^{p'-1}\Big(\frac{\int_{B_{r}^{(\alpha)}(t,x)}\mathbf{g}(t,x)d\nu}{\nu(B_{r}^{(\alpha)}(t,x))}\Big)^{p'-1}\frac{dr}{r}\\
&\lesssim \Big(M_{\nu}\mathbf{g}(t,x)\Big)^{p'-1}P_{\alpha
p}\nu(t,x),
\end{align*}
an application of the H\"{o}lder inequality gives
\begin{align*}
&\int_{\mathbb{G}_{+}}P_{\alpha p}(\mathbf{g}d\nu)(t,x)d\nu(t,x)\\
&\ \ \lesssim \int_{ \mathbb{G}_{+}}\Big(M_{\nu}\mathbf{g}(t,x)\Big)^{p'-1}P_{\alpha p}\nu(t,x)\mathbf{g}(t,x)d\nu(t,x)\\
&\ \ \lesssim \frac{\Big(\int_{
\mathbb{G}_{+}}\Big(M_{\nu}\mathbf{g}(t,x)\Big)^{q'}d\nu(t,x)\Big)^{{(p'-1)}/{q'}}}{\Big(\int_{
\mathbb{G}_{+}}\Big(\mathbf{g}(t,x)P_{\alpha
p}\nu(t,x)\Big)^{{q'}/{(q'-p'+1)}}d\nu(t,x)\Big)^{-{(q'-p'+1)}/{q'}}},
\end{align*}
where
$$
M_{\nu}\mathbf{g}(t,x)=\sup_{r>0}\frac{1}{\nu(B_{r}^{(\alpha)}(t,x))}\int_{B_{r}^{(\alpha)}(t,x)}\mathbf{g}(s,y)d\nu(s,y)
$$
is the centered Hardy-Littlewood maximal function of $\mathbf{g}$
with respect to $\nu$ (cf. \cite{coifman}). The fact that $M_{\nu}$
is bounded on $L_\nu^{q'}(\mathbb{G}_{+})$  and H\"{o}lder's inequality
imply
$$
\int_{\mathbb{G}_{+}}P_{\alpha p}(\mathbf{g}d\nu)(t,x)d\nu(t,x)\lesssim
\frac{\|\mathbf{g}\|_{L_\nu^{q'}( \mathbb{G}_{+})}^{p'}}{\Big(\int_{
\mathbb{G}_{+}}\Big(P_{\alpha
p}\nu\Big)^{{q(p-1)}/{(p-q)}}d\nu\Big)^{-{(p-q)}/{q(p-1)}}},
$$
whence reaching  \eqref{2.9}.

 \end{proof}

\section{Acknowledgements}
\begin{itemize}
\item{ J. Huang was supported  by NNSF of China (No.11471018).}

\item{P. Li was supported by NNSF of
China (No.11871293, No.11571217); Shandong Natural Science
Foundation of China (No.ZR2017JL008, No.ZR2016AM05); University
Science and Technology Projects of Shandong Province (No.J15LI15).}

\item {Y. Liu was supported NNSF of China (No.11671031);
the Fundamental Research Funds for the Central Universities
(No.FRF-BR-17-004B) and Beijing
Municipal Science and Technology Project (No.Z17111000220000).}

\item {S. Shi was supported by NNSF of China (No.11771195) and the Applied Mathematics Enhancement Program of Linyi University (No.LYDX2013BS059).}

\end{itemize}

\end{document}